
\documentclass[
	paper=A4,
	pagesize=auto,
	fontsize=12pt
]{scrartcl}
\usepackage{libertine}
\recalctypearea

\usepackage[british]{babel}
\usepackage[style=alphabetic,url=false,isbn=false,doi=true,eprint=false,backend=biber]{biblatex}
\AtEveryBibitem{\clearlist{language}}
\addbibresource{bibliography.bib}

\usepackage{graphicx}
\usepackage{amsmath,amssymb,amsthm}
\usepackage{csquotes}
\usepackage{bm}
\usepackage{enumerate}

\usepackage{hyperref}
\usepackage{xurl}
\hypersetup{colorlinks,breaklinks=true}

\usepackage{cleveref}

\theoremstyle{plain}
\newtheorem{theorem}{Theorem}[section]
\newtheorem{proposition}[theorem]{Proposition}
\newtheorem{lemma}[theorem]{Lemma}
\newtheorem{corollary}[theorem]{Corollary}

\theoremstyle{remark}

\newtheorem{remark}[theorem]{Remark}

\theoremstyle{definition}
\newtheorem{definition}[theorem]{Definition}

\newlength{\JZHeightOfX}
\newcommand{\JZOrcidlink}[1]{
\setlength{\JZHeightOfX}{\fontcharht\font`X}
\includegraphics[height=\JZHeightOfX]{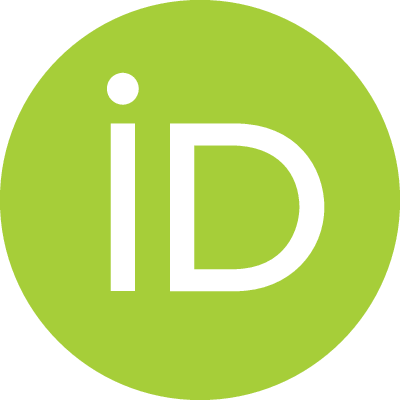}
\href{https://orcid.org/#1}{#1}
}

\begin{document}
\title{Wetterich's Equation and its Boundary Conditions for Radon Measures on Locally Convex Spaces}
\author{
Jobst Ziebell\hspace{2em}\JZOrcidlink{0000-0002-9715-6356}\\
\small{Faculty of Mathematics and Computer Science, Friedrich-Schiller-University, Jena, Germany}
}
\date{\today}
\maketitle

\begin{abstract}
Wetterich's equation and corresponding flows of effective average actions are used frequently in theoretical physics to study the properties of quantum field theories.
Under appropriate conditions, Wetterich's equation also holds for Radon measures on locally convex spaces and the domain of the effective average action is the Lusin affine kernel of the measure.
The resulting flow interpolates between the convex conjugate of the cumulant-generating function of the measure in question and its (generalised) Onsager-Machlup function.
The underlying metric of the latter is induced by a family of measurable bilinear functionals that can be understood as bilinear versions of Lusin measurable linear functionals.
\end{abstract}

\section{Introduction}
In theoretical physics, one studies the flow of \enquote{effective average actions} which in the case of a real-valued scalar field are real-valued functions $\Gamma_k$ on an affine space \cite{src:Flörchingher:FunctionalInformationGeometry}.
The asymptotic boundary condition for large flow parameters should correspond to the \enquote{classical action}.
In \cite{src:Ziebell:RigorousFRG}, the author showed that this picture indeed holds for certain regularised models, encoded as measures that are (measure-theoretically) equivalent to certain Gaußian measures.
Then the asymptotic boundary condition becomes the sum of the squared Cameron-Martin norm and the logarithm of the given density.
It is an intriguing question, whether the flow equation also holds for non-regularised quantum field theories and how the boundary conditions behave.
From a purely mathematical point of view, one can think of taking a Radon measure on a locally convex space and use Wetterich's equation as a tool to calculate its Onsager-Machlup function.
The natural domain of the effective average action then becomes the Lusin affine kernel of the measure.

While not touched upon in this paper, the reverse programme is also of considerable interest:
Given some function on an affine space, can it be the Onsager-Machlup function of a measure?
In theretical physics, this is exactly the problem of quantising a given (Euclidean) classical action functional.
It is well-established that this is possible in e.g. two spacetime dimensions for a wide class of actions \cite{src:Simon:PPhi2} and a landmark result also produced the celebrated $\Phi^4_3$ model (see e.g. \cite{src:GlimmJaffe:QuantumPhysics} for a short summary and a list of original references).
At the same time, it is known - though from a completely different quantisation ansatz - that many action functionals can only correspond to \enquote{trivial} quantum field theories, that is to Gaußian measures \cite[Section 21.6]{src:GlimmJaffe:QuantumPhysics}.
Ideally, the existence of a solution to the flow equation with given boundary conditions should be able to answer the question of the triviality of a given action functional.

A final motivation of this topic, also not touched upon in this paper, is the necessity of \enquote{renormalisation}, which in this setting can be summarised as follows:
Suppose a sequence $(\mu_n)_{n \in \mathbb{N}}$ of Radon probability measures are given that satisfy the flow equation with boundary conditions given by a suitable sequence $(f_n)_{n \in \mathbb{N}}$ of functions.
If $\mu_n$ converges weakly (or perhaps in some other sense) to a Radon measure $\mu$, in what sense do the boundary conditions $f_n$ converge?
From a physical point of view, $\lim_{n \to \infty} f_n$ is typically ill-defined, thanks to the divergence of certain parameters (coupling constants) in $f_n$.
However, the hope of many theoretical physicist - including the author - is that the flow $(\Gamma_k)_{k \ge 0}$ of the effective average action of the limit measure $\mu$ captures these divergences by making the boundary condition
\begin{equation}
\lim_{k \to \infty} \Gamma_k \left( \frac{y}{k} \right)
\end{equation}
well-defined (the factor $1/k$ comes from power counting but could in general be something else).
That is, even if the boundary condition $\lim_{n \to \infty} f_n$ is ill-defined, there should be a way to capture the asymptotics of it.
In fact, one may hope that for the corresponding flows $(\Gamma^n_k)_{k \ge 0}$ of the measures $\mu_n$ respectively, at finite $k \ge 0$
\begin{equation}
\lim_{n \to \infty} \Gamma^n_k
=
\Gamma_k
\end{equation}
in a suitable sense.
\section{Preliminaries}
\label{sec:Preliminaries}
$\overline{\mathbb{R}} = \mathbb{R} \cup \{ - \infty \} \cup \{ \infty \}$ and for a function $f : I \to \mathbb{R}$ on an interval $I \subseteq \mathbb{R}$, denote by $D^+ f(x) = \limsup_{y \searrow x} [f(y) - f(x)]/(y - x) \in \overline{\mathbb{R}}$ the upper right Dini derivative at a point $x \in I$ with $x < \sup I$.
Similarly, $D_- f(x)$ denotes the lower left Dini derivative at $x$.
\begin{lemma}
\label{lem:FiniteDiniImpliesAbsoluteContinuity}
Let $a < b$ and $f : [a, b] \to \mathbb{R}$ be a continuous, monotonically increasing function with $D^+ f(x) \in \mathbb{R}$ for every $x \in [a, b)$.
Then $f$ is absolutely continuous.
\end{lemma}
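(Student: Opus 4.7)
The plan is to deduce absolute continuity via the Banach--Zarecki theorem, which asserts that a continuous function of bounded variation on $[a,b]$ is absolutely continuous if and only if it maps Lebesgue-null sets to Lebesgue-null sets (Luzin's N property). Since $f$ is continuous by hypothesis and of bounded variation (being monotone), the task reduces to proving Luzin N. To exploit the pointwise finiteness of $D^+ f$, I stratify by its level sets: for each $n \in \mathbb{N}$, set
\[
E_n := \{ x \in [a, b) : D^+ f(x) \le n \}.
\]
Because $D^+ f$ is upper semi-continuous---it is the infimum over $\delta > 0$ of the lower semi-continuous suprema $x \mapsto \sup_{h \in (0, \delta)} (f(x+h) - f(x))/h$---each $E_n$ is closed in $[a,b)$, and by hypothesis $[a,b) = \bigcup_n E_n$. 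Any Lebesgue-null $N \subseteq [a,b]$ therefore decomposes as $(N \cap \{b\}) \cup \bigcup_n (N \cap E_n)$, so the problem reduces to the following key lemma: if $E \subseteq [a,b)$ is Lebesgue null and $D^+ f \le M$ on $E$ for some $M < \infty$, then $\lambda^*(f(E)) = 0$.

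For the key lemma I would invoke a Vitali covering argument. Given $\epsilon > 0$, enclose $E$ in an open $U \subseteq [a,b]$ with $\lambda(U) < \epsilon$. At each $x \in E$, the bound $D^+ f(x) \le M$ supplies intervals $[x, x+h] \subset U$ of arbitrarily small length $h > 0$ satisfying $f(x+h) - f(x) < (M+1)h$, and collectively these form a Vitali cover of $E$. Vitali's theorem produces a countable disjoint subcollection $\{[x_k, x_k + h_k]\}_k$ with $\sum_k h_k \le \lambda(U) < \epsilon$ and $\lambda^*(E \setminus \bigcup_k [x_k, x_k + h_k]) = 0$. Continuity and monotonicity of $f$ yield $f([x_k, x_k + h_k]) = [f(x_k), f(x_k + h_k)]$ of length strictly less than $(M+1) h_k$, so the image of the covered portion of $E$ has outer measure at most $(M+1)\epsilon$.

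The main obstacle is precisely the Vitali residual $R := E \setminus \bigcup_k [x_k, x_k + h_k]$: it has Lebesgue outer measure zero, but its $f$-image is not a priori null---which is exactly the property we are trying to prove, now restricted to the sub-null set $R$ of $E$. I would handle this by iterating the construction on successive residuals with geometrically decreasing tolerances $\epsilon_n = \epsilon / 2^n$, so that the cumulative outer measure of all covered images telescopes to at most $2(M+1)\epsilon$; letting $\epsilon \searrow 0$ then forces $\lambda^*(f(E)) = 0$, provided one can control the limiting residual $\bigcap_n R_n$. A conceptually cleaner alternative that sidesteps this residual issue is to pass to the Lebesgue--Stieltjes measure $\mu_f$ defined by $\mu_f([c,d]) = f(d) - f(c)$, decompose it as $\mu_f = \mu_{\mathrm{ac}} + \mu_{\mathrm{s}}$, and argue from the differentiation theory of Radon measures that any non-trivial $\mu_{\mathrm{s}}$ would have to concentrate on a set where the upper right derivative $D^+ f$ equals $+\infty$; since by hypothesis no such set exists, $\mu_{\mathrm{s}} = 0$ and $f$ is absolutely continuous. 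Either approach requires careful treatment of the one-sided (as opposed to symmetric) nature of $D^+ f$.
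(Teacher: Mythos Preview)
Your overall strategy matches the paper's: both reduce to Luzin's property (N) and then invoke Banach--Zarecki. The paper dispatches (N) in one line by citing Saks's inequality $\lambda(f(B)) \le \int_B |D^+ f|\,\mathrm{d}\lambda$ for measurable $B$, from which the null case is immediate. You instead attempt to prove the key step directly, and this is where the gap lies.

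Your Vitali argument covers $E$, leaving a residual $R$ that is a $\lambda$-null subset of $E$ whose image $f(R)$ you cannot control --- which is precisely the statement you are proving, restricted to $R$. The iteration you propose does not close this: at each stage the new residual $R_n$ is again null, but $\bigcap_n R_n$ is merely another null set with uncontrolled image, so nothing has been gained. The standard repair is to run Vitali on $f(E)$ rather than on $E$: the intervals $[f(x),f(x+h)]$ (for $x\in E$ and small $h$) form a Vitali cover of $f(E)$ by continuity of $f$, so Vitali produces pairwise disjoint $J_k=[f(x_k),f(x_k+h_k)]$ with $\lambda^*\bigl(f(E)\setminus\bigcup_k J_k\bigr)=0$; now the residual sits in the target and is automatically harmless. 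Disjointness of the non-degenerate $J_k$ together with monotonicity of $f$ forces the corresponding $[x_k,x_k+h_k]$ to be pairwise disjoint and contained in $U$, giving $\lambda^*(f(E))\le\sum_k|J_k|\le(M+1)\sum_k h_k\le(M+1)\lambda(U)<(M+1)\epsilon$.

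Two smaller points. First, your claim that $D^+ f$ is upper semicontinuous is false: an infimum of lower semicontinuous functions need not be upper semicontinuous, and for instance $f(x)=x$ on $(-\infty,0)$, $f(x)=x/2$ on $[0,\infty)$ has $D^+ f(0)=\tfrac12$ while $D^+ f(x)=1$ for $x<0$. This is harmless, though, since you never actually use closedness of the $E_n$. Second, the Lebesgue--Stieltjes alternative you sketch also has a genuine obstacle: the standard differentiation theorem for the singular part yields $+\infty$ only for the \emph{symmetric} derivative, and transferring this to the one-sided $D^+ f$ is not automatic --- as you yourself note.
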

\begin{proof}
Let $\lambda$ denote the Lebesgue measure on $[a, b]$.
From \cite[p. 272]{src:Saks:TheoryOfTheIntegral}, $D^+ f$ is $\lambda$-measurable and we have
\begin{equation}
\lambda \left( f \left( B \right) \right) \le \int_B \left| D^+ f \right| \mathrm{d} \lambda \, ,
\end{equation}
for every $\lambda$-measurable set $B \subseteq [a, b]$.
In particular, it follows that $f$ takes $\lambda$-null sets to $\lambda$-null sets such that the claim follows from \cite[Theorem 7.18]{src:Rudin:RealAndComplexAnalysis}.
\end{proof}
All vector spaces in this paper are considered over the field of real numbers and the topologies considered are \textbf{not} assumed to be Hausdorff unless explicitly stated.
The complexification of a vector space $X$ is denoted by $X_{\mathbb{C}}$.
If $X$ is equipped with some topology $\tau$ (not necessarily a vector topology), we define $(X, \tau)^*$ to be the set of all linear real-valued functionals on $X$ that are \textbf{upper semicontinuous} with respect to $\tau$.
It is easy to see that $(X, \tau)^*$ is a \textbf{cone}, i.e. it is closed under linear combinations with nonnegative coefficients.
Note that in the case of a vector topology, $(X, \tau)^*$ is precisely the usual topological dual and a vector space in its own right.
We also define the \textbf{conjugate topology}
\begin{equation}
\bar{\tau} = \left\{ -U : U \in \tau \right\}
\end{equation}
and it is straightforward to see that $(X, \tau)^* = - (X, \bar{\tau})^*$.
We shall consider \textbf{asymmetrically normed spaces} which are pairs $(X, p)$ where $X$ is a vector space and $p$ is an \textbf{asymmetric norm} on $X$, i.e. a function $p : X \to [0, \infty)$ such that
\begin{equation}
p \left( x \right) = p \left( -x \right) = 0 \iff x = 0 \, ,
\qquad
p \left( r x \right) = r p \left( x \right) \, ,
\qquad
p \left( x + y \right) \le p \left( x \right) + p \left( y \right)
\end{equation}
for all $x, y \in X$ and $r \ge 0$.
The asymmetric norm induces a first-countable (not necessarily Hausdorff) topology for which sets of the form $x + p^{-1}([0, \epsilon ))$ for $x \in X$ and $\epsilon > 0$ form a neighbourhood basis at any given $x \in X$.
To such a pair $(X, p)$, we associate the \textbf{conjugate} asymmetrically normed space $(X, \bar{p})$ with $\bar{p}(x) = p(-x)$ and remark that the induced topology is indeed the corresponding conjugate topology.
We also equip $(X, p)^*$ with the functional $p^* : (X, p)^* \to [0, \infty)$ given by $\phi \mapsto \sup \{ \phi(x) | x \in X : p(x) \le 1 \}$.
$p^*$ is a \textbf{cone norm} in the sense that
\begin{equation}
p^* \left( \phi \right) = 0 \iff \phi = 0 \, ,
\qquad
p^* \left( r \phi \right) = r p^* \left( \phi \right) \, ,
\qquad
p^* \left( \phi + \psi \right) \le p^* \left( \phi \right) + p^* \left( \psi \right)
\end{equation}
for all $\phi, \psi \in (X, p)^*$ and $r \ge 0$.
The pair $((X, p)^*, p^*)$ is then a \textbf{normed cone} and every $x \in X$ induces a continuous functional on it with $\phi \mapsto x(\phi)$ that is linear in the sense of the usual compatibility with respect to linear combinations with non-negative scalars \cite[Proposition 2.4.22]{src:Cobzaş:FunctionalAnalysisinAsymmetricNormedSpaces}.
The above notation is consistent with the conventional normed setting, i.e. $(X, p)^*$ is just the ordinary continuous dual space whenever $p$ is a norm on $X$ and then $p^*$ is the conventional dual norm.
In either case $p^*$ similarly induces a first-countable topology on $(X, p)^*$.
Furthermore, we consider a \textbf{weak topology} on $(X, p)$, where a net $(x_\alpha)_{\alpha \in I}$ in $X$ converges $p$-weakly to $x \in X$, if $\limsup_\alpha \phi(x_\alpha) \le \phi(x)$ for all $\phi \in (X, p)^*$.
Again, if $p$ is a norm, the weak topology is the usual weak vector topology.
\begin{lemma}
\label{lem:WeakConvergence}
Let $(x_n)_{n \in \mathbb{N}}$ be a sequence in a vector space $X$ with topology $\tau$, $x \in X$ and $\phi \in (X, \tau)^*$ such that for every subsequence $(y_n)_{n \in \mathbb{N}}$,
\begin{equation}
\limsup_{n \to \infty} \phi \left( \frac{1}{n} \sum_{m = 1}^n y_n \right)
\le
\phi \left( x \right) \, .
\end{equation}
Then $\limsup_{n \to \infty} \phi( x_n ) \le \phi( x )$.
\end{lemma}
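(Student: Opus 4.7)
The plan is to exploit the linearity of $\phi$ to reduce the statement to an elementary fact about real-valued sequences, namely that Cesàro means inherit the limit (finite or $+\infty$) of the underlying sequence.

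First, since every $\phi \in (X, \tau)^*$ is linear, for any subsequence $(y_n)_{n \in \mathbb{N}}$ of $(x_n)_{n \in \mathbb{N}}$ we have
\[
\phi \left( \frac{1}{n} \sum_{m = 1}^n y_m \right) = \frac{1}{n} \sum_{m = 1}^n \phi \left( y_m \right).
\]
(I am reading the summand in the statement as $y_m$ rather than $y_n$, since otherwise the Cesàro average is trivial.) The hypothesis then reads: for every subsequence $(y_n)$ of $(x_n)$, the Cesàro means of the real sequence $(\phi(y_n))_n$ have $\limsup$ at most $\phi(x)$.

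Next, I would argue by contradiction. Suppose $\limsup_{n \to \infty} \phi(x_n) > \phi(x)$, allowing the value $+\infty$. By definition of $\limsup$, I can extract a subsequence $(y_n)$ of $(x_n)$ along which $\phi(y_n)$ converges in $\overline{\mathbb{R}}$ to some $L \in (\phi(x), \infty]$. By the classical Cesàro fact that $c_n \to L$ in $\overline{\mathbb{R}}$ implies $\frac{1}{n} \sum_{m = 1}^n c_m \to L$, the Cesàro means of $(\phi(y_n))_n$ likewise tend to $L > \phi(x)$, contradicting the reformulated hypothesis applied to this particular subsequence.

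The only real bookkeeping is handling $L = +\infty$, which requires verifying that Cesàro means still diverge to $+\infty$ in that case; this is standard. The result is thus essentially a transfer of a statement about $X$ via the linear functional $\phi$ into the elementary real-variable lemma that if every subsequence of a sequence has Cesàro mean with $\limsup \le c$, then the original sequence has $\limsup \le c$.
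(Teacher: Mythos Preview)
Your proof is correct and follows essentially the same contradiction-via-subsequence approach as the paper. The paper's version is marginally more direct: instead of extracting a subsequence with $\phi(y_n) \to L > \phi(x)$ and then invoking the Ces\`aro theorem, it simply picks a subsequence with $\phi(y_n) \ge \phi(x) + \epsilon$ for all $n$, so that the Ces\`aro averages are trivially $\ge \phi(x) + \epsilon$ without needing any limit argument.
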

\begin{proof}
Suppose $\limsup_{n \to \infty} \phi( x_n ) \le \phi( x )$ does not hold.
Then there is some $\epsilon > 0$ and a subsequence $(y_n)_{n \in \mathbb{N}}$ such that $\phi( y_n ) \ge \phi(x) + \epsilon$ for all $n \in \mathbb{N}$.
This clearly contradicts the assumption.
\end{proof}
Given a function $f : (X, \tau) \to \overline{\mathbb{R}}$, define its \textbf{convex conjugate} $f^* : (X, \bar{\tau})^* \to \overline{\mathbb{R}}$ with $\phi \mapsto \sup_{x \in X} [ \phi(x) - f ( x ) ]$.
Moreover as in \cite{src:Zalinescu:ConvexAnalysisInGeneralVectorSpaces}, $\mathrm{dom}\, f = \{ x \in X : f(x) < \infty \}$ and $f$ is \textbf{proper} if $\mathrm{dom}\, f \neq \emptyset$ and $f$ does not attain the value $-\infty$.
\begin{definition}[{\cite{src:FischerZiebell:Tychonov}}]
Let $f : X \to \overline{\mathbb{R}}$ be a function on a normed cone $(X, p)$.
Then $f$ is \textbf{right Gâteaux differentiable} at a point $x \in X$ if $f(x) \in \mathbb{R}$ and there exists some continuous and linear $L : X \to \mathbb{R}$ such that
\begin{equation}
\lim_{ t  \searrow 0 } \frac{1}{t} \left[ f \left( x + t y \right) - f \left( x \right) - t L \left( y \right) \right] = 0
\end{equation}
for all $y \in X$.
In that case $L$ is called the \textbf{right Gâteaux derivative} of $f$ at $x$ and is written as $L = Df(x)$.
\end{definition}
\begin{theorem}[{\cite[Theorem 4.1]{src:FischerZiebell:Tychonov}}]
\label{thm:DualDifferentiability}
Let $(X, p)$ be an asymmetrically normed space and $f : X \to \overline{\mathbb{R}}$ a proper convex function, $x \in X$ and $\phi \in (X, \bar{p})^*$.
Setting $g = f - \phi$, the following statements are equivalent:
\begin{enumerate}
\item $f$ is $p$-weakly lower semicontinuous at $x$ and $f^*$ is right Gâteaux differentiable at $\phi$ with $D f^* (\phi) = x$.
Moreover, there exists a sequence $(y_n)_{n \in \mathbb{N}}$ such that
\begin{equation}
\lim_{n \to \infty} g(y_n) = \inf g(X) \qquad\text{and $y_n$ converges $p$-weakly to $x$.}
\end{equation}
\item $g(x) =  \inf g(X)$ and for every sequence $(y_n)_{n \in \mathbb{N}}$ in $X$ we have
\begin{equation}
\lim_{n \to \infty} g(y_n) = g(x) \implies \text{ $y_n$ converges $\bar{p}$-weakly to $x$.}
\end{equation}
\end{enumerate}
\end{theorem}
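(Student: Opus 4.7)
The central identity is $\inf g(X) = -f^*(\phi)$, and the principal tool is the Fenchel--Young inequality $(\phi + t\psi)(y) - f(y) \le f^*(\phi + t\psi)$, which after rearrangement yields
\begin{equation}
\psi(y) \le \frac{f^*(\phi + t\psi) - f^*(\phi)}{t} + \frac{g(y) - \inf g(X)}{t}
\end{equation}
for every $y \in X$, $\psi \in (X, \bar{p})^*$ and $t > 0$. This couples the forward difference quotient of $f^*$ at $\phi$ in direction $\psi$ with the excess $g(y) - \inf g(X)$, and is the hinge that will convert Gâteaux derivatives into $\bar{p}$-weak convergence of minimising sequences, and vice versa.

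\textbf{Direction (1) $\Rightarrow$ (2).} For any sequence $(z_n)$ in $X$ with $g(z_n) \to \inf g(X)$ and any $\psi \in (X, \bar{p})^*$, apply the inequality above, take $\limsup_n$ for fixed $t > 0$ (the excess term vanishes as $n \to \infty$), and then let $t \searrow 0$; Gâteaux differentiability collapses the difference quotient to $\psi(x)$, so $\limsup_n \psi(z_n) \le \psi(x)$. This is precisely $\bar{p}$-weak convergence of $(z_n)$ to $x$. To then show $g(x) = \inf g(X)$, I would apply what has just been proved to the specific $p$-weakly convergent minimising sequence $(y_n)$ assumed in (1). The $\bar{p}$-weak convergence just obtained, together with the given $p$-weak convergence, pins down $\lim_n \phi(y_n) = \phi(x)$ (via $\phi \in (X, \bar{p})^*$ and $-\phi \in (X, p)^*$). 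Combined with the $p$-weak lower semicontinuity of $f$ at $x$, this gives $\inf g(X) = \lim_n g(y_n) \ge f(x) - \phi(x) = g(x)$, and the reverse inequality is a trivial consequence of Fenchel--Young at $x$.

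\textbf{Direction (2) $\Rightarrow$ (1).} The $p$-weakly convergent minimising sequence required by (1) is the constant sequence $y_n = x$. For the $p$-weak lower semicontinuity of $f$ at $x$, argue by contradiction: if $z_n \to x$ $p$-weakly with $\liminf_n f(z_n) < f(x)$, then passing to an appropriate subsequence and using $-\phi \in (X, p)^*$ to bound $\limsup_n (-\phi)(z_n) \le -\phi(x)$ forces $\limsup_n g(z_n) < g(x) = \inf g(X)$, contradicting $g \ge \inf g(X)$. The substantive work is Gâteaux differentiability of $f^*$ at $\phi$. Convexity of $t \mapsto f^*(\phi + t\psi)$ on $[0, \infty)$ makes $D_t := [f^*(\phi + t\psi) - f^*(\phi)]/t$ nondecreasing in $t$, and Fenchel--Young at $x$ gives $D_t \ge \psi(x)$, so $L := \lim_{t \searrow 0} D_t = \inf_{t > 0} D_t$ exists in $\overline{\mathbb{R}}$ with $L \ge \psi(x)$.

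\textbf{The main obstacle.} Proving $L \le \psi(x)$ is the delicate step and is where assumption (2) is used essentially. Assume $L > \psi(x)$ and set $\delta := L - \psi(x) > 0$. Along a sequence $t_n \searrow 0$, pick $y_n$ with $(\phi + t_n\psi)(y_n) - f(y_n) \ge f^*(\phi + t_n\psi) - t_n\delta/2$; finiteness of $L$ ensures $\phi + t_n\psi \in \mathrm{dom}\, f^*$ for large $n$. Direct rearrangement gives $g(y_n) - g(x) \le t_n[\psi(y_n) - D_{t_n} + \delta/2]$, and combined with $g(y_n) \ge g(x)$ this forces $\liminf_n \psi(y_n) \ge L - \delta/2 > \psi(x)$. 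The step that closes the argument is to exhibit $(y_n)$ as a minimising sequence, whereupon (2) produces $\limsup_n \psi(y_n) \le \psi(x)$, contradicting the previous line. For this I would apply Fenchel--Young a second time at a fixed $t_0 > 0$ with $\phi + t_0\psi \in \mathrm{dom}\, f^*$; subtracting that inequality from the defining inequality for $y_n$ yields the uniform bound $\psi(y_n) \le D_{t_0} + o(1)$. Boundedness of $\psi(y_n)$ then gives $t_n\psi(y_n) \to 0$, and the defining inequality forces $g(y_n) \to g(x) = \inf g(X)$, closing the loop. The degenerate case $L = +\infty$ must be ruled out separately, presumably by a convex-combination construction that exploits the fact that (2) uniformly bounds $\psi$ on approximate minimisers of $g$.
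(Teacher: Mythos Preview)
The paper does not prove this theorem; it is quoted verbatim from \cite[Theorem 4.1]{src:FischerZiebell:Tychonov} and used as a black box. There is therefore no proof in the paper to compare against.

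Your argument is correct in both directions, and the organisation around the Fenchel--Young inequality is exactly the right hinge. The only point that is not fully written out is the case $L = +\infty$ in the direction $(2)\Rightarrow(1)$, which you flag yourself. Your instinct there is sound and can be made precise as follows. From $(2)$ one first extracts $\epsilon>0$ and $M\ge\psi(x)$ such that $g(y)\le g(x)+\epsilon$ forces $\psi(y)\le M$; indeed, if this failed one would obtain a minimising sequence with $\psi(y_n)\to\infty$, contradicting $\bar p$-weak convergence. For $y\in\mathrm{dom}\,g$ with $g(y)>g(x)+\epsilon$, the convex combination $z=\lambda y+(1-\lambda)x$ with $\lambda=\epsilon/(g(y)-g(x))$ satisfies $g(z)\le g(x)+\epsilon$ by convexity, hence $\psi(z)\le M$, which unwinds to
\[
\psi(y)-\psi(x)\;\le\;\frac{M-\psi(x)}{\epsilon}\bigl(g(y)-g(x)\bigr).
\]
Combining both regimes gives, for every $y\in\mathrm{dom}\,g$ and every $0<t<\epsilon/(M-\psi(x))$,
\[
-g(y)+t\psi(y)\;\le\;-g(x)+tM,
\]
so $f^*(\phi+t\psi)\le f^*(\phi)+tM<\infty$ and hence $L\le M$. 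This rules out $L=+\infty$, and your finite-$L$ argument then closes the proof. With this addition the proposal is complete.
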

Given a Hausdorff locally convex space $X$, we consider the smallest $\sigma$-algebra $\mathcal{E}(X)$ for which all $\phi \in X^*$ are measurable and the Borel $\sigma$-algebra $\mathcal{B}(X)$ generated by all open sets in $X$.
A \textbf{measure} is always non-negative and $\sigma$-additive and a finite measure on $\mathcal{B}(X)$ is \textbf{Radon} if it is inner regular with respect to compact sets.
For any measure $\mu$ and any $\mu$-measurable function $f$, $[f]_\mu$ will denote the equivalence class of all functions being $\mu$-almost everywhere equal to $f$.
\begin{definition}[{\cite[Definition 2, p. 61]{src:Chevet:KernelOfMeasure}}]
Given a quasicomplete, Hausdorff, locally convex space $X$ and a Radon probability measure $\mu$ on $\mathcal{B}(X)$, an affine subspace $Y \subseteq X$ is a $\mu$-\textbf{Lusin affine subspace} of $X$ if for each $\epsilon > 0$, there exists a convex compact subset $K \subseteq Y$ such that $\mu(K) \ge 1 - \epsilon$.
Moreover, the $\mu$-\textbf{Lusin affine kernel} $\mathcal{A}(\mu)$ of $\mu$ is the intersection of all $\mu$-Lusin affine subspaces of $X$.
The intersection of all $\mu$-\textbf{Lusin linear subspaces} (i.e. those $\mu$-Lusin affine subspaces containing the origin) is denoted by $\mathcal{V}(\mu)$.
\end{definition}
A finite measure $\mu$ on $\mathcal{E}(X)$ is \textbf{scalarly of first order} whenever $X^* \subseteq L^1(\mu)$ in which case we define
\begin{equation}
\mathcal{K}(\mu) = \left\{ \left[ \phi - \int \phi \,\mathrm{d} \mu \right]_\mu \middle| \phi \in X^* \right\}
\end{equation}
and equip it with the $L^1(\mu)$ norm.
The completion of $\mathcal{K}(\mu)$ in the topology of convergence in $\mu$-measure is denoted by $\mathcal{M}(\mu)$ and we set $\mathcal{L}(\mu) = \mathcal{M}(\mu) \cap L^1(\mu)$.
On $\mathcal{L}(\mu)$, we consider the asymmetric norm $p_\mu : \mathcal{L}(\mu) \to [0, \infty)$ with $T \mapsto \int_X \max\{ -T, 0 \} \mathrm{d} \mu$ and equip $\mathcal{L}(\mu)$ with the topology $\tau_\mu$ given as the least upper bound of the topology of convergence in $\mu$-measure and the asymmetrically normed topology induced by $p_\mu$.
\begin{lemma}
$\tau_\mu$ is first-countable and its conjugate topology $\overline{\tau_\mu}$ is the least upper bound of the topology of convergence in $\mu$-measure and the asymmetrically normed topology induced by $\overline{p_\mu}$.
\end{lemma}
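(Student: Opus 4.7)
The plan is to establish both assertions by working at the level of a natural subbase for $\tau_\mu$.

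For first-countability, both constituent topologies are already first-countable. The $p_\mu$-topology has, by construction, the countable neighbourhood base $\{x + p_\mu^{-1}([0, 1/n)) : n \in \mathbb{N}\}$ at every point $x$. The topology of convergence in $\mu$-measure on $\mathcal{L}(\mu) \subseteq L^0(\mu)$ is metrisable by the translation-invariant pseudometric $(S, T) \mapsto \int_X \min\{1, |S - T|\} \,\mathrm{d} \mu$, which is well-defined because $\mu$ is finite. Hence pairwise intersections of the two countable neighbourhood bases furnish a countable neighbourhood base at each point of the join $\tau_\mu$, so $\tau_\mu$ is first-countable.

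For the description of $\overline{\tau_\mu}$, I would exploit the elementary fact that $U \mapsto -U$ is a bijection of the power set of $\mathcal{L}(\mu)$ which commutes with arbitrary unions and finite intersections; consequently, whenever a topology $\sigma$ is generated by a subbase $\mathcal{S}$, the conjugate topology $\overline{\sigma}$ is generated by $\{-S : S \in \mathcal{S}\}$. Applied to the natural subbase of $\tau_\mu$ formed by the union of the two constituent collections of open sets, this reduces the task to identifying the conjugates of each constituent separately. Since the topology of convergence in $\mu$-measure is a vector topology, $T \mapsto -T$ is a homeomorphism and the topology coincides with its own conjugate. For the $p_\mu$-topology, the direct computation
\begin{equation}
-\left( x + p_\mu^{-1}\bigl([0, \epsilon)\bigr) \right)
=
(-x) + \overline{p_\mu}^{-1}\bigl([0, \epsilon)\bigr)
\end{equation}
on basic neighbourhoods shows that its conjugate is precisely the topology induced by $\overline{p_\mu}$. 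Combining the two identifications yields the claim.

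The only step requiring real care is the distributivity of $U \mapsto -U$ with respect to subbase generation; everything else follows from the translation invariance built into the definitions of the two constituent topologies.
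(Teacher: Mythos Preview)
Your proof is correct and follows essentially the same logic as the paper's: first-countability from the two constituents, self-conjugacy of the measure topology, and the identification of the conjugate of the $p_\mu$-topology with the $\overline{p_\mu}$-topology. The only cosmetic difference is that the paper verifies the second claim by checking net convergence (a net $\overline{\tau_\mu}$-converges to $T$ iff its negative $\tau_\mu$-converges to $-T$, which unwinds to convergence in measure together with $\overline{p_\mu}$-convergence), whereas you argue directly with the subbase and the bijection $U \mapsto -U$; both routes encode the same observation that conjugation distributes over the join of topologies.
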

\begin{proof}
It is well-known that the topology of convergence in $\mu$-measure is metrisable and thus first-countable.
Being the least upper bound of two first-countable topologies, $\tau_\mu$ is thus first-countable as well.
Furthermore, a net $(T_\alpha)_{\alpha \in I}$ in $\mathcal{L}(\mu)$ $\overline{\tau_\mu}$-converges to some $T \in \mathcal{L}(\mu)$ precisely, when for each $\overline{\tau_\mu}$-neighbourhood $U$ of $T$, there is some $\alpha_U \in I$ be such that $T_\beta \in U$ for all $\beta \in I_{\ge \alpha_U}$.
By definition, this is equivalent to $-T_\alpha$ $\tau_\mu$-converging to $-T$, i.e.
\begin{equation}
- T_\alpha \to -T \text{ in $\mu$-measure and} \lim_\alpha p_\mu \left( - T_\alpha - \left( - T \right) \right) = \lim_\alpha p_\mu \left( T - T_\alpha \right) = 0 \, .
\end{equation}
This is in turn equivalent to $T_\alpha$ converging to $T$ in $\mu$-measure and $\lim_\alpha \overline{p_\mu} ( T_\alpha - T ) = 0$.
\end{proof}
A related construction to $\mathcal{A}(\mu)$ is the \textbf{kernel} $\mathcal{H}(\mu)$ of a finite measure $\mu$ on $\mathcal{E}(X)$ given as the continuous dual space of $\{ [ \phi ]_\mu : \phi \in X^* \}$ equipped with the topology of convergence in $\mu$-measure.
Note that if $\int_X \phi \mathrm{d} \mu = 0$ for all $\phi \in X^*$, then $\mathcal{H}(\mu) = \mathcal{M}(\mu)^*$.
Furthermore, it is known that $\mathcal{V}(\mu) = \mathcal{H}(\mu)$ for Radon probability measures on quasicomplete locally convex spaces \cite[p. 62, Theorem 3]{src:Chevet:KernelOfMeasure}.
For any measure $\mu$ on $\mathcal{E}(X)$ (or $\mathcal{B}(X)$) and any $x \in X$, the mapping $y \mapsto x + y$ is $\mu$-measurable such that we may consider the translated measure $\mu_x = \mu \ast \delta_x$ on $\mathcal{E}(X)$ (or $\mathcal{B}(X)$) where $\ast$ denotes convolution and $\delta_x$ is the Dirac measure at $x$.
\begin{theorem}[{\cite[Theorem 3, p. 62]{src:Chevet:KernelOfMeasure}}]
\label{thm:AffineKernelContainedInShiftedKernel}
Let $X$ be a quasicomplete, Hausdorff, locally convex space $X$, $\mu$ a Radon probability measure on $\mathcal{B}(X)$ and $x \in X$.
Then
\begin{equation}
\mathcal{A}(\mu)
\subseteq
x + \mathcal{H}(\mu_{-x}) \, .
\end{equation}
\end{theorem}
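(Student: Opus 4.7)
The plan is to reduce the inclusion to the cited identity $\mathcal{V}(\mu_{-x}) = \mathcal{H}(\mu_{-x})$ by exploiting the translation invariance built into the definitions. The key observation is that the homeomorphism $y \mapsto y + x$ on $X$ preserves both convexity and compactness, while the convolution identity $\mu_{-x} = \mu \ast \delta_{-x}$ gives $\mu_{-x}(B) = \mu(B + x)$ for every Borel set $B \subseteq X$. This should set up a correspondence between the $\mu_{-x}$-Lusin linear subspaces of $X$ and a subfamily of the $\mu$-Lusin affine subspaces of $X$, which is exactly what is needed in order to relate the two kernels by a translation.

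The first step I would carry out is a translation lemma: if $V \subseteq X$ is a $\mu_{-x}$-Lusin linear subspace, then $V + x$ is a $\mu$-Lusin affine subspace of $X$. Indeed, given $\epsilon > 0$, a convex compact set $K \subseteq V$ with $\mu_{-x}(K) \ge 1 - \epsilon$ yields the convex compact set $K + x \subseteq V + x$, and the translation identity for $\mu_{-x}$ gives $\mu(K + x) = \mu_{-x}(K) \ge 1 - \epsilon$.

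With that correspondence in hand, the inclusion follows directly from the definition of $\mathcal{A}(\mu)$ as an intersection. For every $\mu_{-x}$-Lusin linear subspace $V$, the set $V + x$ belongs to the family over which $\mathcal{A}(\mu)$ is defined, so $\mathcal{A}(\mu) \subseteq V + x$. Since translation commutes with arbitrary intersections,
\begin{equation}
\mathcal{A}(\mu) \subseteq \bigcap_V \left( V + x \right) = \left( \bigcap_V V \right) + x = \mathcal{V}(\mu_{-x}) + x = \mathcal{H}(\mu_{-x}) + x \, ,
\end{equation}
where the final equality is the cited identity for Radon probability measures on quasicomplete Hausdorff locally convex spaces.

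I do not foresee any serious obstacle beyond the bookkeeping described above; the genuinely analytic content, namely the identification $\mathcal{V} = \mathcal{H}$, is imported as a black box from Chevet. The only mild subtlety is that the intersection defining $\mathcal{A}(\mu)$ ranges over \emph{all} Lusin affine subspaces, not only those of the form $V + x$ with $V$ linear; this is harmless because restricting to a smaller subfamily only enlarges the intersection, which is precisely the direction the inclusion demands.
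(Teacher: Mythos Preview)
Your argument is correct. The paper does not supply a proof of this theorem; it simply cites it from Chevet, so there is no in-paper proof to compare against. Your reduction via the translation lemma to the identity $\mathcal{V}(\mu_{-x}) = \mathcal{H}(\mu_{-x})$ (which the paper also attributes to the same theorem of Chevet) is precisely the intended mechanism, and the bookkeeping you outline---that translations preserve convex compacts, that $\mu_{-x}(B) = \mu(B+x)$, and that translation by a fixed vector commutes with intersections---is all sound. The only point you might make explicit is that $\mu_{-x}$ is again a Radon probability measure (translation being a homeomorphism), so the Chevet identity applies to it; but this is implicit in the setup and not a gap.
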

As in \cite{src:Chevet:KernelOfMeasure}, given a Hausdorff, locally convex space $X$, we define the algebraic dual space $\tilde{X}$ of $X^*$ and equip it with the weak-$\ast$ topology induced by $X^*$, turning it into a complete, Hausdorff, locally convex space with $(\tilde{X})^* = X^*$.
Then a finite measure $\mu$ on $\mathcal{E}(X)$ that is scalarly of first order has a \textbf{mean} $m_\mu \in \tilde{X}$ given by $\phi \mapsto \int_X \phi \mathrm{d} \mu$.
It is clear that the canonical inclusion map $X \to \tilde{X}$ is weakly continuous such that the pushforward measure $\tilde{\mu}$ is a measure on $\mathcal{E}(\tilde{X})$.
Furthermore, if $\mu$ is finite and Radon on $\mathcal{B}(X)$, then $\tilde{\mu}$ is finite and Radon on $\mathcal{B}(\tilde{X})$ and $X$ is $\tilde{\mu}$-measurable.
Consequently, if $X$ is quasicomplete, it is clear that $\mathcal{A}(\tilde{\mu})$ is equal to $\mathcal{A}(\mu)$ when $X$ is considered as a subset of $\tilde{X}$.

Finally, recall that a topological space $A$ is \textbf{hereditarily separable} if every subset of $A$ is separable.
\section{Positive Semidefinite \texorpdfstring{$\mu$}{μ}-measurable bilinear functionals}
\label{sec:MeasurableInnerProducts}
In this section, we introduce a bilinear analogue of the \emph{linear Lusin measurable functionals} considered in \cite{src:Slowikowski:PreSupports}.
However, while these objects certainly promise interesting (at least in the impression of the author) mathematics to uncover, in this paper we shall develop just what is needed for Wetterich's equation.
Consequently, the author does not want to use the label \enquote{bilinear Lusin measurable} and uses the nondescript term \enquote{regular} instead.

Throughout this section, $X$ is a Hausdorff, quasicomplete, locally convex space and $\mu$ a Radon probability measure on $X$.
Let $\mu^2$ denote the unique Radon extension of $\mu \times \mu$ to $\mathcal{B}(X \times X)$.
A particularly useful class of $\mu$-Lusin linear subspace $Y \subseteq X$ are the \textbf{standard} ones\cite[p. 56]{src:Chevet:KernelOfMeasure}, i.e. those for which there exists a sequence $(K_n)_{n \in \mathbb{N}}$ of compact disks in $X$ with $Y = \cup_{n \in \mathbb{N}} K_n$.
It is easy to show that every $\mu$-Lusin linear subspace contains a standard one (see \cite[p. 61]{src:Chevet:KernelOfMeasure}).
\begin{definition}
\label{def:MuRegularInnerProduct}
Let $f : X \times X \to \overline{\mathbb{R}}$ a $\mu^2$-measurable function for which there exists a sequence of continuous, symmetric, positive semidefinite, bilinear functionals $B_n : X \times X \to \mathbb{R}$ such that
\begin{itemize}
\item $m \le n$ implies $B_m(x,x) \le B_n(x,x)$ for every $x \in X$,
\item $\sup_{n \in \mathbb{N}} B_n(x,x) < \infty$ for $\mu$-almost every $x \in X$,
\item $B_n$ converges to $f$ in $\mu^2$-measure.
\end{itemize}
Then, we call $f$ a \textbf{$\mu$-regular semidefinite inner product} on $X$ and denote the set of all such objects by $\mathcal{I}(\mu)$.
\end{definition}
In fact, the first two points already define the $\mu$-regular semidefinite inner product uniquely.
\begin{theorem}
\label{thm:InnerProductRepresentation}
Let $B_n : X \times X \to \mathbb{R}$ be a sequence of continuous, symmetric, positive semidefinite, bilinear functionals with
\begin{itemize}
\item $m \le n$ implies $B_m(x,x) \le B_n(x,x)$ for every $x \in X$,
\item $\sup_{n \in \mathbb{N}} B_n(x,x) < \infty$ for $\mu$-almost every $x \in X$.
\end{itemize}
Then, $B_n$ converges to some $f \in \mathcal{I}(\mu)$ in $\mu^2$-measure.
Furthermore, there is a standard $\mu$-Lusin linear subspace $Z \subseteq X$ and a symmetric, positive semidefinite bilinear functional $B : Z \times Z \to \mathbb{R}$ with $B = f$ $\mu^2$-almost everywhere such that $B_n \upharpoonright Z \times Z$ converges pointwise to $B$.
Moreover, the convergence is uniform on $K \times K$ for every weakly compact, convex set $K \subseteq X$ contained in $Z$.
\end{theorem}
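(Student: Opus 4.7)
The plan proceeds in three steps. Set $q_n(x) := B_n(x,x)$, so $\sqrt{q_n}$ is a continuous seminorm on $X$, and the monotone supremum $q := \sup_n q_n$ is well defined in $[0,\infty]$, lower semicontinuous, and finite $\mu$-almost everywhere. For $m \ge n$ the form $B_m - B_n$ is continuous, symmetric and positive semidefinite, so Cauchy--Schwarz gives
\begin{equation*}
\bigl|B_m(x,y) - B_n(x,y)\bigr|^{2} \le \bigl(q_m(x) - q_n(x)\bigr)\bigl(q_m(y) - q_n(y)\bigr),
\end{equation*}
whose right-hand side tends to $0$ for $\mu^2$-almost every $(x,y)$. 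Hence $(B_n)$ is $\mu^2$-a.e.\ Cauchy, converges $\mu^2$-a.e.\ (and therefore in $\mu^2$-measure) to some function $f$, and this $f$ lies in $\mathcal{I}(\mu)$ by definition.

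Next, pick any standard $\mu$-Lusin linear subspace $Z_0 = \bigcup_m L_m$ (with $L_m$ compact disks) and set $Z := Z_0 \cap \{q < \infty\}$. Subadditivity of $\sqrt{q_n}$ makes $\{q < \infty\}$ a linear subspace, so $Z$ is a linear subspace of $X$; lower semicontinuity and positive homogeneity of $q$ make $\{q \le N\}$ closed, convex and balanced, so each $L_m \cap \{q \le N\}$ is a compact disk and $Z$ is the countable union of these disks. Choosing $L_m$ and $N$ with $\mu(L_m), \mu(\{q \le N\}) \ge 1 - \epsilon/2$ exhibits a compact disk in $Z$ of $\mu$-measure at least $1 - \epsilon$, so $Z$ is a standard $\mu$-Lusin linear subspace. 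Define
\begin{equation*}
B(x,y) := \tfrac{1}{4}\bigl[q(x+y) - q(x-y)\bigr] ;
\end{equation*}
linearity of $Z$ keeps $B$ real-valued, and taking $n \to \infty$ in the polarisation identity $B_n(x,y) = \tfrac14[q_n(x+y) - q_n(x-y)]$ together with the monotone convergence $q_n \to q$ on $Z$ yields $B_n \to B$ pointwise on $Z \times Z$. Symmetry, bilinearity and positive semidefiniteness are inherited in the limit, with $B(x,x) = q(x)$; since $\mu^2(Z \times Z) = 1$, this forces $B = f$ $\mu^2$-almost everywhere.

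The remaining claim --- uniform convergence on $K \times K$ for every weakly compact convex $K \subseteq Z$ --- is the heart of the matter. Applying Cauchy--Schwarz to the positive semidefinite form $B - B_n$ on $Z \times Z$ reduces it to
\begin{equation*}
\sup_{(x,y) \in K \times K} \bigl|B(x,y) - B_n(x,y)\bigr| \le \sup_{x \in K} \bigl(q(x) - q_n(x)\bigr),
\end{equation*}
so the task becomes to establish $q_n \to q$ uniformly on $K$. Being weakly compact convex, $K$ is bounded in $X$; the sets $K \cap (L_m \cap \{q \le N\})$ form a countable cover of $K$ by weakly closed convex subsets, so a Baire-category argument on the compact Hausdorff space $K$ in its weak topology, combined with the balanced convex structure of the compact disks, localises $K$ inside some dilation $\lambda(L_m \cap \{q \le N\})$ of a single compact disk. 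On that compact set $q_n \uparrow q$ with each $q_n$ continuous, and Dini's theorem concludes the argument once $q$ is shown to be continuous there --- a fact that follows from the equicontinuity of $(\sqrt{q_n})$ on the Banach disk $L_m \cap \{q \le N\}$, itself majorised by the bounded seminorm $\sqrt{q}$ on the disk. I expect this last step to be the main obstacle, as both the localisation of weakly compact convex subsets of $Z$ into a single compact disk and the continuity of $q$ on such a disk rest on the Banach-disk structure of the standard $\mu$-Lusin decomposition rather than on general LCTVS considerations.
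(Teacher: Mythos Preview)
Your first two steps are sound: the Cauchy--Schwarz bound for $B_m - B_n$ gives $\mu^2$-a.e.\ convergence, and your $Z = Z_0 \cap \{q < \infty\}$ is indeed a standard $\mu$-Lusin linear subspace on which $B_n \to B$ pointwise. The gap is in the third step, and it is fatal for this particular choice of $Z$: on the compact disks $L_m \cap \{q \le N\}$ the convergence $q_n \to q$ need \emph{not} be uniform, because $q$ need not be continuous in the $X$-topology there. Your equicontinuity argument only shows that the $\sqrt{q_n}$ are uniformly Lipschitz with respect to the Minkowski functional of the Banach disk, hence that $\sqrt{q}$ is continuous for that strictly finer norm. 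Dini on the compact set $K$ requires continuity of $q$ in the original (or weak) topology, and that does not follow.

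A concrete obstruction: take $X = \mathbb{R}^{\mathbb{N}}$ with the product topology, $\mu$ the countable product of standard Gaussians, $L_m = \prod_k [-mk, mk]$, and $B_n(x,y) = \sum_{k=1}^n x_k y_k / k^2$. Then $q(x) = \sum_k x_k^2/k^2 < \infty$ $\mu$-a.e., and $D = L_1 \cap \{q \le 1\}$ is a compact disk contained in your $Z$. The sequence $x^{(j)} = j\, e_j$ lies in $D$ and converges to $0$ in $X$, yet $q(x^{(j)}) = 1$ for every $j$; thus $q$ is discontinuous at $0$ on $D$ and $\sup_{x \in D}\bigl(q(x) - q_n(x)\bigr) \ge 1$ for all $n$. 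Since the weak and original topologies on $\mathbb{R}^{\mathbb{N}}$ coincide, $D$ is itself a weakly compact convex subset of your $Z$ on which the claimed uniform convergence fails.

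The paper avoids this by building $Z$ from the uniform convergence rather than hoping to deduce it afterwards. One first applies Egoroff's theorem to obtain sets of nearly full measure on which $q_n \to q$ uniformly, and then shows by a direct bilinear estimate that uniform convergence on a set $A$ propagates to the closed balanced convex hull $\overline{\mathrm{cobal}\, A}$. Combined with inner regularity this produces an increasing sequence of compact disks $Q_m$ with $\mu(Q_m) \to 1$ on each of which uniform convergence holds \emph{by construction}; one sets $Z = \bigcup_m Q_m$. The final localisation of a weakly compact convex $K \subseteq Z$ into a single $Q_m$ via the Banach-disk/Baire argument is exactly as you sketch, but at that point uniform convergence on $Q_m$ is already available and no appeal to Dini or to continuity of $q$ is needed.
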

\begin{proof}
Consider the sequence $p_n(x) = \sqrt{B_n(x,x)}$ of continuous seminorms on $X$ and let $Y \subseteq X$ denote the set of all $x \in X$ with $\sup_{n \in \mathbb{N}} p_n(x) < \infty$.
Then $\mu(Y) = 1$ by assumption and from the homogeneity of $p_n$ and the triangle inequality, it is clear that $Y$ is a linear subspace.
By the monotonicity, $Y$ also coincides with the set of convergence points of $p_n$.
Now, define the symmetric, positive semidefinite, bilinear functional $B : Y \times Y \to \mathbb{R}$, with
\begin{equation}
B \left( x, y \right)
=
\lim_{n \to \infty} \frac{1}{4} \left[
B_n \left( x + y, x + y \right)
-
B_n \left( x - y, x - y \right)
\right] \, ,
\end{equation}
as well as the corresponding seminorm $p(x) = \sqrt{B(x,x)}$ on $Y$.
Evidently, $B_n \upharpoonright Y \times Y$ converges pointwise to $B$ and thus in $\mu^2$-measure, turning $f = [B]_{\mu^2}$ into a $\mu$-regular semidefinite inner product $X$.
Also, note that $B - B_n$ is a symmetric, positive semidefinite bilinear functional on $Y$ and thus, by Cauchy-Schwarz,
\begin{equation}
\left| B \left( x, y \right) - B_n \left( x, y \right) \right|
\le
\sqrt{B\left( x, x \right) - B_n \left( x, x \right)}
\sqrt{B\left( y, y \right) - B_n \left( y, y \right)} \, ,
\end{equation}
for all $x, y \in Y$.
Now, suppose $A \subseteq Y$ is such that $p_n^2$ converges uniformly to $p^2$ on $A$.
Clearly, we then also have uniform convergence on the balanced hull $B = A \cup (-A)$ of $A$.
Setting $\Delta_n = \sup_{x \in B} | p_n(x)^2 - p(x)^2 |$ we estimate,
\begin{equation}
\begin{aligned}
\sup_{\substack{x,y \in B\\t\in(0,1)}}
&\left| p_n \left( t x + \left[ 1 - t \right] y \right)^2 - p \left( t x + \left[ 1 - t \right] y \right)^2 \right| \\
&\le
\sup_{\substack{x,y \in B\\t\in(0,1)}}
\left( t^2 \Delta_n + \left[ 1 - t \right]^2 \Delta_n + 2 t \left( 1 - t \right) \left| B_n \left( x, y \right) - B \left( x, y \right) \right| \right) \\
&\le
\sup_{\substack{x,y \in B\\t\in(0,1)}}
\left( t^2 \Delta_n + \left[ 1 - t \right]^2 \Delta_n + 2 t \left( 1 - t \right) \Delta_n \right)
=
\Delta_n \, .
\end{aligned}
\end{equation}
Consequently, we also have uniform convergence on the convex balanced hull $C = \mathrm{cobal}\, A$ of $A$.
Now, let $x$ be a point in the closure of $C$ and $(x_\alpha)_{\alpha \in I}$ a net in $C$ converging to $x$.
Furthermore, for an arbitrary $\delta > 0$, take $N \in \mathbb{N}$ such that $\sup_{y \in C} | p_n(y)^2 - p_m(y)^2 | < \delta$ for all $m, n \in \mathbb{N}_{\ge N}$.
Then, for all $m, n \in \mathbb{N}_{\ge N}$,
\begin{equation}
\left| p_n \left( x \right)^2 - p_m \left( x \right)^2 \right|
=
\lim_{\alpha} \left| p_n \left( x_\alpha \right)^2 - p_m \left( x_\alpha \right)^2 \right|
<
\delta \, .
\end{equation}
Hence, $p_n(x)$ is a Cauchy sequence, $x \in Y$ and we have uniform convergence on the closure $D = \overline{C}$ of $C$.
Consequently, the uniform convergence on a set $A \subseteq Y$ implies the uniform convergence on $\overline{\mathrm{cobal}\, A} \subseteq Y$.
Combined with Egoroff's theorem, for each $m \in \mathbb{N}$, we find a closed disk $D \subseteq Y$ with $\mu(Y \setminus D) < 1/(2m)$ on which $p_n^2$ converges uniformly to $p^2$.
Since $\mu$ is Radon, it follows that there is a compact disk $Q_m \subseteq D$ with $\mu(Q_m) > 1 - 1/m$.
From the bilinearity, we also have uniform convergence on $2 Q_m$ such that without loss of generality, $2 Q_m \subseteq Q_{m+1}$.
Consequently, $Z = \cup_{m \in \mathbb{N}} Q_m$ is a standard $\mu$-Lusin linear subspace.
Furthermore, by polarisation
\begin{equation}
\begin{aligned}
\sup_{x, y \in Q_m} &\left| B_n \left( x, y \right) - B \left( x, y \right) \right| \\
&=
\sup_{x, y \in Q_m} \left| p_n \left( \frac{x + y}{2} \right)^2 - p_n \left( \frac{x - y}{2} \right)^2 + p \left( \frac{x + y}{2} \right)^2 - p \left( \frac{x + y}{2} \right)^2 \right| \\
&\le
2 \sup_{x \in Q_m} \left| p_n \left( x \right)^2 - p \left( x \right)^2 \right| \, .
\end{aligned}
\end{equation}
Hence, $B_n$ converges uniformly to $B$ on $Q_m \times Q_m$.
Finally, let $K \subseteq X$ be convex, weakly compact and contained in $Z$.
Then the convex hull of $K \cup -K$ is a weakly compact disk (see \cite[Theorem 10.2]{src:SchaeferWolff:TVS}) and thus a Banach disk.
Hence, the reasoning in \cite[p. 56]{src:Chevet:KernelOfMeasure} applies.
\end{proof}
\begin{definition}
\label{def:StandardRepresentation}
Let $Y \subseteq X$ be a standard $\mu$-Lusin linear subspace and $B: Y \times Y \to \mathbb{R}$ a symmetric, positive semidefinite, bilinear functional that is continuous on every set of the form $K \times K$ where $K \subseteq X$ is a weakly compact convex set contained in $Y$.
If $f \in \mathcal{I}(\mu)$ with $f = [B]_{\mu^2}$, $(Y, B)$ is called a \textbf{standard representation} of $f$.
\end{definition}
By \cref{thm:InnerProductRepresentation}, every $f \in \mathcal{I}(\mu)$ has a standard representation.
\begin{remark}
\label{rem:LinearAlmostUniformContinuity}
Let $\phi : Y \to \mathbb{R}$ be a linear functional on a standard $\mu$-Lusin linear subspace $Y = \cup_{n \in \mathbb{N}} K_n \subseteq X$ where $K_n$ is a sequence of compact disks.
Then $\phi$ is continuous on each $K_n$ if and only if it is continuous on every convex, weakly compact subset of $X$ contained in $Y$. (see \cite[p. 56]{src:Chevet:KernelOfMeasure}).
\end{remark}
The continuity of such representations can be combined with a separability assumption to yield a strong uniqueness statement.
\begin{lemma}
\label{lem:EqualityOfInnerProductRepresentations}
Let $Y, Z \subseteq X$ be two linear subspaces with $A : Y \times Y \to \mathbb{R}$ and $B : Z \times Z \to \mathbb{R}$ two bilinear functionals with $A = B$ $\mu^2$-almost everywhere.
Suppose there are two sequences of hereditarily separable, compact disks $(K_n)_{n \in \mathbb{N}}$ and $(L_n)_{n \in \mathbb{N}}$ in $Y$ and $Z$ respectively, such that for all $n \in \mathbb{N}$,
\begin{itemize}
\item $\mu(K_n) > 1 - 1/n$ and $\mu(L_n) > 1 - 1/n$,
\item $A \upharpoonright K_n \times K_n$ and $B \upharpoonright L_n \times L_n$ are continuous.
\end{itemize}
Then, there is a $\mu$-Lusin linear subspace $U \subseteq Y \cap Z$ such that $A$ and $B$ coincide on $U \times U$.
\end{lemma}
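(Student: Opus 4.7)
The plan is to extract nested compact disks on which $A$ and $B$ agree pointwise, and then scale them up into a linear subspace. First I set $M_n := K_n \cap L_n$, a hereditarily separable compact disk in $Y \cap Z$ with $\mu(M_n) > 1 - 2/n$, on which $A$ and $B$ are both jointly continuous. Passing to a subsequence $(k_n)$ with $1 - \mu(M_{k_n}) \le 2^{-n}$ and defining $T_n := \bigcap_{j \ge n} M_{k_j}$, I obtain an increasing sequence of compact disks in $Y \cap Z$ with $T_n \subseteq M_{k_n}$ and $\mu(T_n) \ge 1 - 2^{1-n}$; joint continuity of $A$ and $B$ on $T_n \times T_n$, as well as hereditary separability of $T_n$, are inherited from $M_{k_n}$.

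The first key step is then to pin down a large subset of each $T_n \times T_n$ on which $A = B$ holds pointwise. Let $S_n := \mathrm{supp}(\mu \upharpoonright T_n)$, well-defined by separability, with $\mu(S_n) = \mu(T_n)$ and $S_n \subseteq S_{n+1}$ (since $T_n \subseteq T_{n+1}$). The set $\{A \neq B\} \cap (T_n \times T_n)$ is open, as joint continuity of $A - B$ there makes $\{A = B\}$ closed, and it is $\mu^2$-null by hypothesis; on the other hand, any product neighbourhood of a point in $S_n \times S_n$ has strictly positive $\mu^2$-measure by the definition of support. Hence $\{A \neq B\}$ misses $S_n \times S_n$, and $A = B$ holds on $S_n \times S_n$. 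Passing to $Q_n := \overline{\mathrm{cobal}\, S_n} \subseteq T_n$ — compact by the quasi-completeness of $X$ together with Mackey's theorem — bilinearity gives $A = B$ on $\mathrm{cobal}\, S_n \times \mathrm{cobal}\, S_n$, which extends by continuity on $T_n \times T_n$ to all of $Q_n \times Q_n$. The $Q_n$ thus form an increasing sequence of compact disks in $Y \cap Z$ with $\mu(Q_n) \to 1$ and $A = B$ on $Q_n \times Q_n$.

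The main obstacle is to assemble the $Q_n$ into a linear subspace while preserving $A = B$, since $2 Q_n \subseteq Q_{n+1}$ is not automatic. I sidestep this by scaling: set
\[
U := \bigcup_{n, k \in \mathbb{N}} 2^k Q_n \subseteq Y \cap Z \, .
\]
The nesting $Q_n \subseteq Q_{n+1}$, together with $Q_n$ being balanced and convex, implies that $U$ is a linear subspace: any two elements lie in a common $2^k Q_n$, whose sum falls into $2^{k+1} Q_n \subseteq U$, and scalar multiples by $|\lambda| \le 2^m$ land in $2^{k+m} Q_n \subseteq U$. $U$ is $\mu$-Lusin linear because it contains the compact disks $Q_n$ of $\mu$-measure approaching $1$. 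Finally, for any $(x, y) \in U \times U$ one picks $n, k$ with $x, y \in 2^k Q_n$, writes $x = 2^k u$, $y = 2^k v$ with $u, v \in Q_n$, and uses bilinearity together with $A = B$ on $Q_n \times Q_n$ to conclude $A(x, y) = 2^{2k} A(u, v) = 2^{2k} B(u, v) = B(x, y)$, completing the proof.
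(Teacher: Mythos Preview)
Your proof is correct and takes a genuinely different, more direct route than the paper's. The paper first applies a Fubini argument to isolate a full-measure linear set $S \subseteq Y \cap Z$ on which $A(x,\cdot) = B(x,\cdot)$ holds $\mu$-almost everywhere for each fixed $x$, then invokes an external result of S\l{}owikowski on linear Lusin $\mu$-measurable functionals to obtain, for each $x \in S$, a $\mu$-Lusin subspace $V_x$ on which the two linear forms $A(x,\cdot)$ and $B(x,\cdot)$ agree; hereditary separability then enters to produce countable dense subsets $D_n$ so that the intersection $\bigcap_{z \in D} V_z$ is still $\mu$-Lusin. You bypass this reduction to the one-variable case entirely by working directly with the support of $\mu \upharpoonright T_n$: the set $\{A \neq B\} \cap (T_n \times T_n)$ is relatively open and $\mu^2$-null, hence disjoint from $S_n \times S_n$, and bilinearity plus joint continuity propagate the equality to $Q_n \times Q_n$.

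Two small remarks on your write-up. First, the support $S_n$ carries full measure not really ``by separability'' but because finite Radon measures are $\tau$-additive, so the union of all open $\mu$-null subsets of $T_n$ is itself null; your conclusion stands regardless. Second, $Q_n = \overline{\mathrm{cobal}\, S_n}$ is compact simply as a closed subset of the compact disk $T_n$---no appeal to quasi-completeness or Mackey's theorem is needed. In fact, tracing through your argument, the hereditary separability hypothesis is never used in an essential way, so your approach actually yields a slightly stronger statement than the paper's; what it costs is that the paper's reduction to the linear case connects the bilinear theory to the existing linear Lusin framework, which is conceptually informative even if less economical here.
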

\begin{proof}
Without loss of generality, $2 K_n \subseteq K_{n+1}$, $2 L_m \subseteq L_{n+1}$ and $Y = \cup_{n \in \mathbb{N}} K_n$ respectively $Z = \cup_{n \in \mathbb{N}} L_n$.
Let
\begin{equation}
S = \left\{ x \in Y \cap Z \middle| A \left( x, \cdot \right) = B \left( x, \cdot \right) \; \text{$\mu$-almost everywhere} \right\} \, .
\end{equation}
Since $A$ and $B$ coincide $\mu^2$-almost everywhere, it is clear that $S$ has full $\mu$-measure and, by the bilinearity of $A$ and $B$, $S$ is a linear space.
Moreover, by \cref{rem:LinearAlmostUniformContinuity}, for each $x \in S$, $A(x, \cdot)$ and $B(x, \cdot)$ are representations of the same linear Lusin $\mu$-measurable functional (see \cite[Section 3]{src:Slowikowski:PreSupports}).
Hence, by \cite[Proposition 3.1]{src:Slowikowski:PreSupports}, there is a standard $\mu$-Lusin linear subspace $V_x$ on which they coincide.
Now, for every $n \in \mathbb{N}$, pick a compact set $C_n \subseteq S$ with $\mu(C_n) > 1 - n/3$ and $2 C_n \subseteq C_{n+1}$ and set
\begin{equation}
Q_n = \overline{\mathrm{cobal}\, C_n \cap K_{3n} \cap L_{3n}}
\qquad\text{noting that}\qquad
\mu \left( Q_n \right) > 1 - \frac{1}{n} \, .
\end{equation}
Since $K_{3n} \cap L_{3n}$ is a compact disk, $Q_n \subseteq K_{3n} \cap L_{3n}$ and by assumption both $A$ and $B$ are continuous on $Q_n \times Q_n$.
Note that $W = \cup_{n \in \mathbb{N}} Q_n$ is a linear space and therefore a $\mu$-Lusin linear subspace of $X$.
For each $n \in \mathbb{N}$, fix a countable, dense subset $D_n$ of $\mathrm{cobal}\, C_n \cap K_{2n} \cap L_{2n}$ and note that $D_n \subseteq S$ since $A$ and $B$ are bilinear.
The union $D = \cup_{n \in \mathbb{N}} D_n$ is also countable and it is easy to see (e.g. \cite[Proposition 2.4]{src:Slowikowski:PreSupports}) that $U = \cap_{z \in D} V_z \cap W$ is a $\mu$-Lusin linear subspace.
Now, pick any $x \in W$ and fix some $m \in \mathbb{N}$ with $x \in Q_m$ and a net $(x_\alpha)_{\alpha \in I}$ in $D_m$ converging to $x$ and let $y \in U$ be arbitrary.
Then, there is some $M \in \mathbb{N}_{\ge m}$ such that $y \in Q_M$.
Since $Q_m \subseteq Q_M$, we use the continuity to find
\begin{equation}
A \left( x, y \right)
=
\lim_{\alpha} A \left( x_\alpha, y \right)
=
\lim_{\alpha} B \left( x_\alpha, y \right)
=
B \left( x, y \right) \, .
\end{equation}
Consequently, $A$ and $B$ coincide on $W \times U$ and thus on $U \times U$.
\end{proof}
\begin{remark}
There are simple sufficient conditions for the separability condition to be satisfied, such as the existence of a sequence $(K_n)_{n \in \mathbb{N}}$ of metrisable compact disks in $X$ with $\mu(\cup_{n \in \mathbb{N}} K_n) = 1$.
It is also automatically satisfied, if $X$ is a Souslin space (see e.g. \cite[Corollary 6.7.8]{src:Bogachev:MeasureTheory}).
\end{remark}
In the following, we shall assume the existence of a sequence $(K_n)_{n \in \mathbb{N}}$ of hereditarily separable, compact disks in $X$ with $\mu(\cup_{n \in \mathbb{N}} K_n) = 1$.
\begin{corollary}
\label{cor:InnerProductWelldefinedOnLusinSubspace}
Let $f \in \mathcal{I}(\mu)$ and $(Y, B)$ as well as $(Z, C)$ be two standard representations of $f$.
Then there is a standard $\mu$-Lusin linear subspace $W \subseteq Y \cap Z$ such that $B$ and $C$ coincide on $W \times W$ and $(W , B \upharpoonright W \times W)$ is a standard representation of $f$.
\end{corollary}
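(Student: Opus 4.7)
The plan is to invoke \cref{lem:EqualityOfInnerProductRepresentations} applied to the representations $(Y, B)$ and $(Z, C)$, and then to pass to a standard sub-Lusin subspace inside the resulting Lusin subspace.

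Write $Y = \bigcup_n K_n^Y$ and $Z = \bigcup_n K_n^Z$ for increasing sequences of compact disks furnished by the definition of \enquote{standard}. By \cref{def:StandardRepresentation} combined with \cref{rem:LinearAlmostUniformContinuity}, $B$ and $C$ are continuous on $K_n^Y \times K_n^Y$ and $K_n^Z \times K_n^Z$ respectively for every $n \in \mathbb{N}$. The point at which some work is needed is the \emph{hereditary separability} hypothesis of \cref{lem:EqualityOfInnerProductRepresentations}, which is not automatic from the definition of a standard representation. To obtain it, I would use the running assumption of this section, namely that there is a sequence $(H_n)_{n \in \mathbb{N}}$ of hereditarily separable compact disks in $X$ with $\mu(\bigcup_n H_n) = 1$. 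Since both $\mu(K_n^Y) \to 1$ and $\mu(H_n) \to 1$, for each $n$ I would choose $m(n)$ large enough that the compact disk $K_n^Y \cap H_{m(n)}$, which inherits hereditary separability from $H_{m(n)}$, has $\mu$-measure exceeding $1 - 1/n$. These sets provide the required sequence of hereditarily separable compact disks in $Y$ on which $B$ is continuous, and an analogous construction works for $Z$.

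Having checked the hypotheses, \cref{lem:EqualityOfInnerProductRepresentations} yields a $\mu$-Lusin linear subspace $U \subseteq Y \cap Z$ on which $B$ and $C$ coincide. Since every $\mu$-Lusin linear subspace contains a standard one (as recalled at the beginning of \cref{sec:MeasurableInnerProducts}), I would pick any standard $\mu$-Lusin linear subspace $W \subseteq U$. It remains to verify that $(W, B \upharpoonright W \times W)$ is a standard representation of $f$: symmetry, bilinearity and positive semidefiniteness are inherited from $B$; continuity on $K \times K$ for every weakly compact convex $K \subseteq W \subseteq Y$ follows directly from the corresponding property of $B$; and since $\mu^2(W \times W) = 1$, the restriction still represents $f$ in $\mu^2$-measure, i.e.\ $[B \upharpoonright W \times W]_{\mu^2} = f$.

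The only genuine obstacle is the first step, namely trimming the compact disks of the given standard representations to hereditarily separable ones without losing too much measure or the continuity of $B$ and $C$. Everything else is a bookkeeping exercise invoking \cref{lem:EqualityOfInnerProductRepresentations} together with the fact that every Lusin linear subspace contains a standard one.
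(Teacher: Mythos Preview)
Your proposal is correct and follows exactly the intended route: the paper states this as an unproved corollary immediately after introducing the standing hereditary-separability assumption, so the implicit argument is precisely ``trim the disks against the $(H_n)$ to meet the hypotheses of \cref{lem:EqualityOfInnerProductRepresentations}, then pass to a standard sub-Lusin subspace.'' Your handling of the only nontrivial point---intersecting $K_n^Y$ with a sufficiently large $H_{m(n)}$ to recover hereditary separability while keeping large measure and continuity---is the right move, and the verification that $(W, B \upharpoonright W \times W)$ is again a standard representation is straightforward as you indicate.
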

It follows that with the separability condition, every $f \in \mathcal{I}(\mu)$ has a $\mu$-almost everywhere well-defined restriction to the diagonal $\tilde{f} : X \to \overline{\mathbb{R}}, x \mapsto \lim_{n \to \infty} B(x,x)$ for any standard representation $(Y, B)$ of $f$.
This is reminiscent of the restriction of the integral kernel of a trace-class operator to the diagonal treated in \cite{src:Brislawn:TraceableIntegralKernels} and to the concept of virtual continuity considered in \cite{src:VershikEtAl:VirtualContinuity}.
Another consequence is that $f$ has an everywhere well-defined restriction $\underline{f} : \mathcal{H}(\mu) \times \mathcal{H}(\mu)$ (though this would also follow without the separability condition).
\begin{proposition}
\label{prop:TranslationOfInnerProduct}
Let $f \in \mathcal{I}(\mu)$, $h \in \mathcal{H}(\mu)$ and $(Y, B)$ a standard representation of $f$.
Then $Y$ is a Borel set, $B$ is Borel measurable and for every $h \in \mathcal{H}(\mu)$,
the function
\begin{equation}
B^h : Y \times Y \to \mathbb{R},
x \mapsto B \left( x - h, x - h \right)
\end{equation}
is $\mu$-measurable.
Moreover, $Y$ is a $\mu_{-h}$-Lusin linear subspace of $X$ and $(Y,B)$ is a $\mu_{-h}$-standard representation of $[B]_{\mu_{-h}^2} \in \mathcal{I}(\mu_{-h})$.
\end{proposition}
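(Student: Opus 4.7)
The plan is to exploit the structure of $Y$ as a countable increasing union of compact disks $Y = \bigcup_n K_n$ (which we may arrange to satisfy $2 K_n \subseteq K_{n+1}$) and the continuity of $B$ on each $K_n \times K_n$, to reduce every assertion to elementary topological and measure-theoretic verifications.

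First, $Y$ is $F_\sigma$ because each compact $K_n$ is closed in the Hausdorff space $X$. By the definition of a standard representation, $B \upharpoonright K_n \times K_n$ is continuous and therefore Borel measurable; since $Y \times Y = \bigcup_n K_n \times K_n$, $B$ is Borel measurable on $Y \times Y$. Because $h \in \mathcal{H}(\mu) = \mathcal{V}(\mu)$ by \cite[Theorem 3]{src:Chevet:KernelOfMeasure}, $h$ lies in every $\mu$-Lusin linear subspace of $X$, in particular in $Y$. Consequently, the translation $x \mapsto x - h$ maps $Y$ continuously into itself, and composition with the Borel map $B$ shows that $B^h$ is Borel and thus $\mu$-measurable.

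The main obstacle is to show that $Y$ is a \emph{standard} $\mu_{-h}$-Lusin linear subspace, because translation by $-h$ generally destroys the balancedness of the approximating disks. The remedy is to restore balancedness by hand: fix $n_0$ with $h \in K_{n_0}$ and for $n \geq n_0$ set $\tilde K_n = \overline{\mathrm{cobal}(K_n - h)}$. Since $K_n - h \subseteq 2 K_n \subseteq K_{n+1}$ and $K_{n+1}$ is already a closed disk, we have $\tilde K_n \subseteq K_{n+1} \subseteq Y$; hence each $\tilde K_n$ is a compact disk contained in $Y$. The inclusion $K_n - h \subseteq \tilde K_n$ yields $\mu_{-h}(\tilde K_n) \geq \mu(K_n) \to 1$, and a short verification using $2 K_n \subseteq K_{n+1}$ shows $Y = \bigcup_n \tilde K_n$, so that $Y$ is a standard $\mu_{-h}$-Lusin linear subspace.

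Finally, for $(Y, B)$ to be a $\mu_{-h}$-standard representation of $[B]_{\mu_{-h}^2}$, I would reuse the approximating sequence $(B_n)_n$ from the original $\mu$-standard representation of $f$. Continuity of $B$ on weakly compact convex subsets of $Y$ is intrinsic and hence persists under the change of reference measure. Pointwise convergence $B_n \to B$ on $Y \times Y$ combined with $\mu_{-h}(Y) = \mu(Y) = 1$ yields $\sup_n B_n(x,x) = B(x,x) < \infty$ for $\mu_{-h}$-almost every $x$, while $\mu_{-h}^2(Y \times Y) = 1$ upgrades pointwise convergence on $Y \times Y$ to convergence $\mu_{-h}^2$-almost everywhere, and thus in $\mu_{-h}^2$-measure. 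Hence $[B]_{\mu_{-h}^2} \in \mathcal{I}(\mu_{-h})$ and $(Y, B)$ is its $\mu_{-h}$-standard representation.
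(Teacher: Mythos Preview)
Your treatment of the Borel-measurability claims and of $h \in Y$ matches the paper's. Your direct construction of the compact disks $\tilde K_n = \overline{\mathrm{cobal}(K_n - h)}$ exhibiting $Y$ as a standard $\mu_{-h}$-Lusin linear subspace is correct and more explicit than the paper's route, which instead passes through an auxiliary subspace $W \subseteq Y$ supplied by \cref{cor:InnerProductWelldefinedOnLusinSubspace} and then observes that the Lusin property of $W$ is inherited by the larger $Y$ (standardness being intrinsic to $Y$ as a countable union of compact disks).

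The final paragraph, however, contains a genuine gap. You assert ``pointwise convergence $B_n \to B$ on $Y \times Y$'', but nothing in \cref{def:StandardRepresentation} ties a given standard representation $(Y,B)$ to any approximating sequence: the $(B_n)$ furnished by \cref{def:MuRegularInnerProduct} are only known to converge to $f$ in $\mu^2$-measure, and \cref{thm:InnerProductRepresentation} produces a possibly \emph{different} standard representation $(Z, B')$ on which $B_n \to B'$ pointwise. For an arbitrary $(Y,B)$ there is no reason that $B_n(x,y) \to B(x,y)$ at every point of $Y \times Y$. To conclude $B_n \to B$ $\mu_{-h}^2$-almost everywhere you must first know that $B$ and $B'$ agree on a set of full $\mu_{-h}^2$-measure. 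This is precisely where the paper invokes \cref{cor:InnerProductWelldefinedOnLusinSubspace} (and hence the ambient separability assumption): it supplies a standard $\mu$-Lusin linear subspace $W \subseteq Y \cap Z$ on which $B = B'$, whence $B_n \to B$ pointwise on $W \times W$; since $h \in \mathcal{H}(\mu) \subseteq W$ one has $W - h = W$, so $\mu_{-h}^2(W \times W) = 1$ and the conclusion follows. Without this bridging step your argument does not close.
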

\begin{proof}
Since $Y = \cup_{n \in \mathbb{N}} L_n$ for some sequence $(L_n)_{n \in \mathbb{N}}$ of compact disks, it is Borel.
Furthermore, $B$ is the pointwise limit of the Borel measurable functions $B \chi_n$ where $\chi_n$ is the characteristic function of $K_n \times K_n$.
Hence, $B$ is Borel measurable.
That $B^h$ is Borel measurable follows because $\mathcal{H}(\mu) \subseteq Y$ and the fact that translations are Borel measurable.
Now, pick any sequence $(B_n)_{n \in \mathbb{N}}$ converging to $f$ in the sense of \cref{def:MuRegularInnerProduct}.
Then the limit as in \cref{thm:InnerProductRepresentation} coincides with $B$ on some standard $\mu$-Lusin linear subspace $W \subseteq Y$ by \cref{cor:InnerProductWelldefinedOnLusinSubspace} on which the convergence happens pointwise.
Since $h \in W$, we thus have that $W$ is a $\mu_{-h}$-Lusin linear subspace and that $(B_n)_{n \in \mathbb{N}}$ converges to $B \upharpoonright W \times W$ in $\mu_{-h}^2$-measure.
\end{proof}
In that sense, every $f \in \mathcal{I}(\mu)$ uniquely corresponds to a $g \in \mathcal{I}(\mu_{-h})$.
It is tempting to also define an associated \enquote{seminorm}.
But there is of course no way to uniquely define the corresponding open and closed \enquote{balls} around points $x \in X$.
However, by \cref{prop:TranslationOfInnerProduct},
\begin{equation}
K^A_\epsilon(h)
=
\left\{ x \in Y : \sqrt{A \left( x - h, x - h \right)} \le \epsilon \right\}
=
h + K^A_\epsilon(0)
\end{equation}
is $\mu$-measurable for every $h \in \mathcal{H}(\mu)$ and $\mu ( K^A_\epsilon(h) )$ is independent of the chosen standard representation $(Y, A)$.
Hence, we abuse the notation and define
\begin{equation}
\mu \left( K^f_\epsilon(h) \right)
:=
\mu \left( K^A_\epsilon(h) \right) \, .
\end{equation}
It is easily seen that the set $\mathcal{I}(\mu)$ of $\mu$-regular inner products is closed under finite sums.
Consequently, there is a natural partial order on $\mathcal{I}(\mu)$ with $f \le g$ for $f, g \in \mathcal{I}(\mu)$ if there is some $h \in \mathcal{I}(\mu)$ such that $f + h = g$.
With the preceeding results, this ordering is now easily understood.
\begin{theorem}
\label{thm:InnerProductOrderDiagonal}
Let $f, g \in \mathcal{I}(\mu)$ with $f \le g$.
Then $\tilde{f} \le \tilde{g}$ $\mu$-almost everywhere and $\underline{f}(x,x) \le \underline{g}(x,x)$ for all $x \in \mathcal{H}(\mu)$.
\end{theorem}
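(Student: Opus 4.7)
The plan is to exploit the additive definition of the ordering. Since $f \le g$, there exists $h \in \mathcal{I}(\mu)$ with $g = f + h$, and any standard representation of $h$ is pointwise positive semidefinite on the diagonal of its domain. I would fix standard representations $(Y_1, F)$, $(Y_2, G)$, $(Y_3, H)$ of $f$, $g$, and $h$ respectively. The goal is then to produce a single standard $\mu$-Lusin linear subspace $V$ on which the pointwise identity $G = F + H$ holds; restricting to the diagonal and using $H(x,x) \ge 0$ will deliver both claims in one step.

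First, I would observe that $Y_1 \cap Y_2 \cap Y_3$ is itself a $\mu$-Lusin linear subspace: given $\epsilon > 0$, picking convex compact $K_i \subseteq Y_i$ with $\mu(K_i) > 1 - \epsilon/3$ yields $K_1 \cap K_2 \cap K_3 \subseteq Y_1 \cap Y_2 \cap Y_3$ of $\mu$-measure above $1 - \epsilon$. Since every $\mu$-Lusin linear subspace contains a standard one, there is a standard $\mu$-Lusin linear subspace $W \subseteq Y_1 \cap Y_2 \cap Y_3$. On $W$, both $F + H$ and $G$ are symmetric, positive semidefinite bilinear functionals that are continuous on $K \times K$ for every weakly compact convex $K \subseteq W$, since such a $K$ is contained in each $Y_i$ individually. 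Moreover $[F + H]_{\mu^2} = f + h = g = [G]_{\mu^2}$, so both $(W, F + H \upharpoonright W \times W)$ and $(W, G \upharpoonright W \times W)$ are standard representations of $g$. \cref{cor:InnerProductWelldefinedOnLusinSubspace} then furnishes a standard $\mu$-Lusin linear subspace $V \subseteq W$ on which $F + H$ and $G$ agree pointwise.

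On $V \times V$ the identity $G(x,y) = F(x,y) + H(x,y)$ combined with positive semidefiniteness of $H$ yields $G(x,x) \ge F(x,x)$ for every $x \in V$. Since $\mu(V) = 1$ and the diagonal restrictions coming from standard representations equal $\tilde{f}$ and $\tilde{g}$ $\mu$-almost everywhere, the inequality $\tilde{f} \le \tilde{g}$ holds $\mu$-almost everywhere. For the pointwise statement on $\mathcal{H}(\mu)$, I would invoke $\mathcal{H}(\mu) = \mathcal{V}(\mu)$, which is contained in every $\mu$-Lusin linear subspace and hence in $V$; the same pointwise identity on $V$ then gives $\underline{g}(x,x) = \underline{f}(x,x) + \underline{h}(x,x) \ge \underline{f}(x,x)$ for every $x \in \mathcal{H}(\mu)$.

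The main obstacle is the bookkeeping needed to assemble the common standard subspace $V$ while ensuring that $(W, F + H)$ genuinely qualifies as a standard representation of $g$ so that \cref{cor:InnerProductWelldefinedOnLusinSubspace} is applicable. Positive semidefiniteness, bilinearity, and continuity on weakly compact convex subsets all need to survive the intersection and the addition, but each of these is straightforwardly additive or stable under passage to subspaces, so once the reduction to $V$ is performed the remaining inequality is immediate.
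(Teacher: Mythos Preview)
Your proposal is correct and follows essentially the same route as the paper: pick $h\in\mathcal{I}(\mu)$ with $f+h=g$, pass to a common standard $\mu$-Lusin linear subspace, invoke the uniqueness result (the paper uses \cref{lem:EqualityOfInnerProductRepresentations} directly while you use its corollary \cref{cor:InnerProductWelldefinedOnLusinSubspace}, which forces you to verify that $(W,F+H)$ is a genuine standard representation), and read off $F(x,x)\le G(x,x)$ on that subspace. The paper is slightly terser in that it arranges all three representations on a single $Y$ from the outset and does not separate the $\tilde f$ and $\underline f$ conclusions, but the substance is identical.
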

\begin{proof}
Let $h \in \mathcal{I}(\mu)$ with $f + h = g$.
Using \cref{thm:InnerProductRepresentation} as well as the fact that the family of $\mu$-Lusin linear subspaces is stable under finite intersections, there are standard representations $(Y, B^1), (Y, B^2)$ and $(Y, B^3)$ of $f, g$ and $h$ respectively.
By assumption, $B^1 + B^3 = B^2$ $\mu^2$-almost everywhere such that by \cref{lem:EqualityOfInnerProductRepresentations}, there is a $\mu$-Lusin linear subspace $Z \subseteq Y$ such that $B^1 + B^3$ and $B^2$ coincide on $Z \times Z$.
Consequently, $B^1(x,x) \le B^2(x,x)$ for all $x \in Z$.
\end{proof}
\begin{theorem}
\label{thm:BoundedIncreasingInnerProductsConverge}
Let $(f_n)_{n \in \mathbb{N}}$ be an increasing sequence in $\mathcal{I}(\mu)$ such that for $\mu$-almost every $x \in X$, $\sup_{n \in \mathbb{N}} \tilde{f}_n(x) < \infty$.
Then there is a unique $f \in \mathcal{I}(\mu)$ such that $f_n$ converges to $f$ in $\mu^2$-measure.
Moreover, for each $n \in \mathbb{N}$, letting $(Y_n,B_n)$ be a standard representation of $f_n$, $\tilde{f}(x) = \sup_{n \in \mathbb{N}} B_n(x,x)$ for $\mu$-almost every $x \in X$ and 
\begin{equation}
\label{eq:MeasureOfLimitBalls}
\mu \left( K^f_\epsilon (h) \right)
=
\inf_{n \in \mathbb{N}} \mu \left( K^{f_n}_\epsilon (h) \right)
=
\lim_{n \to \infty} \mu \left( K^{f_n}_\epsilon (h) \right)
\end{equation}
for all $\epsilon > 0$ and $h \in \mathcal{H}(\mu)$.
\end{theorem}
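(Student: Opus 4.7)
The plan is to reduce everything to \cref{thm:InnerProductRepresentation} by exhibiting a single increasing sequence $(C_n)_{n \in \mathbb{N}}$ of continuous symmetric positive semidefinite bilinear functionals on $X \times X$ whose diagonal limit represents $\sup_n \tilde{f}_n$ on a suitable Lusin subspace. To build it, I would first pass to differences: setting $g_1 = f_1$ and $g_i = f_i - f_{i-1}$ for $i \ge 2$, the definition of the order on $\mathcal{I}(\mu)$ provides $g_i \in \mathcal{I}(\mu)$ with $f_n = g_1 + \dots + g_n$, and for each $i$ \cref{def:MuRegularInnerProduct} supplies a defining sequence $(A_{i,k})_{k \in \mathbb{N}}$ of continuous symmetric positive semidefinite bilinear forms on $X \times X$ that is increasing in $k$ on the diagonal, $\mu$-a.e.\ pointwise bounded, and convergent to $g_i$ in $\mu^2$-measure.

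Form the diagonal sum $C_n = \sum_{i=1}^n A_{i,n}$. Each $C_n$ is continuous, symmetric, positive semidefinite and bilinear, and the diagonal $C_n(x,x)$ is increasing in $n$ because $A_{i,n+1}(x,x) \ge A_{i,n}(x,x)$ for $i \le n$ while $A_{n+1,n+1}$ is positive semidefinite. Mimicking the $Q_n$-construction used in the proofs of \cref{thm:InnerProductRepresentation} and \cref{lem:EqualityOfInnerProductRepresentations}, I would pick a single standard $\mu$-Lusin linear subspace $Y$ on which a standard representation $\beta_i$ of each $g_i$ is defined and $A_{i,k}(x,x) \to \beta_i(x,x) = \tilde{g}_i(x)$ pointwise. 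A straightforward monotone convergence argument then gives $\sup_n C_n(x,x) = \sum_{i=1}^\infty \tilde{g}_i(x) = \sup_n \tilde{f}_n(x)$ on $Y$, which is finite $\mu$-almost everywhere by hypothesis. Applying \cref{thm:InnerProductRepresentation} to $(C_n)$ produces the desired $f \in \mathcal{I}(\mu)$ together with a standard representation $(Z, B)$ such that $B(x,x) = \sup_n C_n(x,x)$ on $Z$; the identity $\tilde{f}(x) = \sup_n B_n(x,x)$ for any choice of standard representations $B_n$ of $f_n$ then follows from \cref{cor:InnerProductWelldefinedOnLusinSubspace}, because the diagonals of any two standard representations of the same element of $\mathcal{I}(\mu)$ coincide $\mu$-almost everywhere.

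Convergence $f_n \to f$ in $\mu^2$-measure comes from the Cauchy–Schwarz estimate already used inside \cref{thm:InnerProductRepresentation}: on a common standard Lusin subspace the bilinear functional $B - \sum_{i=1}^n \beta_i$ is positive semidefinite, so
\[
\bigl| B(x,y) - \textstyle\sum_{i=1}^n \beta_i(x,y) \bigr|^2
\le
\bigl( B - \textstyle\sum_{i=1}^n \beta_i \bigr)(x,x) \cdot \bigl( B - \textstyle\sum_{i=1}^n \beta_i \bigr)(y,y),
\]
and the right-hand side tends to $0$ pointwise $\mu^2$-almost everywhere, which implies convergence in $\mu^2$-measure; uniqueness of $f$ is then immediate because limits in $\mu^2$-measure are unique up to $\mu^2$-null sets. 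For the ball identity \eqref{eq:MeasureOfLimitBalls}, observe that $K^{f_n}_\epsilon(h) \supseteq K^{f_{n+1}}_\epsilon(h)$ up to $\mu$-null sets by the monotonicity of the diagonals, and that $\bigcap_n K^{f_n}_\epsilon(h)$ agrees with $K^f_\epsilon(h)$ modulo a $\mu$-null set via $\sup_n B_n(x-h,x-h) = B(x-h,x-h)$ on a Lusin subspace; continuity of measure from above then yields both the $\inf$ and the $\lim$ equalities simultaneously.

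The main obstacle is the very first step: producing one globally-defined, monotone sequence of continuous bilinear forms on $X \times X$ whose diagonal captures the limit, given that each $f_n$ arrives only with its own separate defining sequence. The telescoping trick combined with the diagonal sum $C_n = \sum_{i \le n} A_{i,n}$ resolves this because monotonicity and positive semidefiniteness are preserved under addition, while the $\mu$-almost everywhere finiteness of $\sup_n C_n(\cdot,\cdot)$ is inherited from $\sup_n \tilde{f}_n$ on a common standard Lusin subspace; every remaining assertion then unwinds mechanically from the representation results already in place.
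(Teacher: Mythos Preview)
Your argument is correct and follows the same overall strategy as the paper: manufacture a single monotone sequence $(C_n)_{n\in\mathbb N}$ of continuous, symmetric, positive semidefinite bilinear forms on $X$ whose diagonal sup equals $\sup_n \tilde f_n$ $\mu$-almost everywhere, then feed it into \cref{thm:InnerProductRepresentation}.

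The difference lies in how $(C_n)$ is built. The paper proceeds inductively: at stage $N$ it already has a continuous $C_N \le f_N$ with $d(C_N,B_N)<\eta/N$ and uniform closeness on a large compact disk, then uses $f_{N+1}-C_N\in\mathcal I(\mu)$ to pick a continuous $E_N$ approximating this difference and sets $C_{N+1}=C_N+E_N$. This bookkeeping yields $d(C_n,f_n)\to 0$ directly, so $f_n\to f$ in $\mu^2$-measure is immediate from the triangle inequality in $L^0(\mu^2)$. Your telescoping construction $C_n=\sum_{i\le n}A_{i,n}$ is more economical---no $\epsilon$-management, no auxiliary compact disks $Q_n$---but you then have to recover $f_n\to f$ separately, which you do via the Cauchy--Schwarz bound on the positive semidefinite remainder $B-\sum_{i\le n}\beta_i$. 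Both routes are valid; yours is arguably cleaner, while the paper's explicit control also delivers the identity $\tilde f=\sup_n B_n(\cdot,\cdot)$ slightly more directly via the uniform estimates on the $Q_n$.

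One point you should make explicit: passing to a \emph{single} standard Lusin linear subspace on which all countably many $\beta_i$ are defined and $A_{i,k}\to\beta_i$ pointwise requires knowing that a countable intersection of $\mu$-Lusin linear subspaces is again a $\mu$-Lusin linear subspace (and hence contains a standard one). This is true---intersect compact disks $K_i\subseteq Z_i$ with $\mu(K_i)>1-\epsilon\,2^{-i}$---and the paper uses it implicitly elsewhere, but it is worth stating since your whole construction rests on it.
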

\begin{proof}
Fix some translation invariant metric $d : L^0(\mu^2) \times L^0(\mu^2) \to [0, \infty)$ inducing the topology of convergence in $\mu^2$-measure.
Set $Q_1 = \{ 0 \} \subseteq X$,
\begin{equation}
C_1 : X \times X \to \mathbb{R}, (x,y) \mapsto 0 \quad\text{and}\quad \eta = d(C_1, B_1) + 1 \, .
\end{equation}
Clearly, $C_1 \in \mathcal{I}(\mu)$, $C_1 \le B_1$ and $Q_1 \subseteq Y_1$ is a compact disk in $X$.
For the induction step, suppose that for some $N \in \mathbb{N}$, we have continuous, symmetric, positive semidefinite, bilinear functionals $(C_n)_{n = 1}^N$ on $X$ and a sequence $(Q_n)_{n = 1}^N$ of compact disks in $X$ such that
\begin{itemize}
\item $\forall m, n \in \{1, \dots, N\} \forall x \in X$, $m \le n \implies C_m ( x, x ) \le C_n ( x, x )$,
\item $C_N \le B_N$,
\item $d(C_N, B_N) < \eta / N$,
\item $Q_N \subseteq Y_N$ with $\mu(Q_N) \ge 1 - 2^{-N}$,
\item $\sup_{x \in Q_N} \vert B_N (x, x) - C_N(x,x) \vert < 1/N$.
\end{itemize}
Then $C_N \le f_{N+1}$ by assumption, such that there is a $g_N \in \mathcal{I}(\mu)$ with $C_N + g_N = f_{N+1}$.
Picking a standard representation $(Z_N, D_N)$ of $g_N$ and applying \cref{def:MuRegularInnerProduct,thm:InnerProductRepresentation}, there is a continuous, symmetric, positive semidefinite, bilinear functional $E_N$ on $X$ such that $E_N \le g_N$ and $d(E_N,D_N) < \eta/(N+1)$ as well as a compact disk $Q_{N+1} \subseteq Z_N \cap Y_{N+1}$ with $\mu(Q_{N+1}) \ge 1 - 2^{-N-1}$ such that
\begin{equation}
\sup_{x \in Q_{N+1}} \left\vert D_N \left( x \right) - E_N \left( x, x \right) \right\vert
<
\frac{1}{N+1} \, .
\end{equation}
Setting $C_{N+1} = C_N + E_N$, we clearly have $C_{N+1} \le f_{N+1}$ and
\begin{equation}
d \left( C_{N+1}, B_{N+1} \right)
=
d \left( C_{N+1} - C_N, B_{N+1} - C_N \right)
=
d \left( E_N, D_N \right)
<
\frac{\eta}{N+1} \, .
\end{equation}
Moreover,
\begin{equation}
\sup_{x \in Q_{N+1}} \left\vert C_{N+1} \left( x, x \right) - B_{N+1} \left( x \right) \right\vert
=
\sup_{x \in Q_{N+1}} \left\vert E_N \left( x, x \right) - D_N \left( x \right) \right\vert
<
\frac{1}{N+1} \, .
\end{equation}
Consequently, we have a sequence $(C_n)_{n \in \mathbb{N}}$ such that the above points hold for all $N \in \mathbb{N}$.
It is clear that $\sup_{n \in \mathbb{N}} C_n(x,x) < \infty$ for $\mu$-almost every $x \in X$ such that by \cref{thm:InnerProductRepresentation}, $C_n$ converges to some $f \in \mathcal{I}(\mu)$.
Moreover,
\begin{equation}
d \left( f_n, f \right)
\le
d \left( f_n, C_n \right)
+
d \left( C_n, f \right)
<
\frac{\eta}{n}
+
d \left( C_n, f \right)
\to 0
\left( n \to \infty \right) \, ,
\end{equation}
such that $f_n$ converges to $f$ in $\mu^2$-measure.
The uniqueness is clear, since $(L^0(\mu^2), d)$ is Hausdorff.
To see the formula for $\tilde{f}$, note that for every $N \in \mathbb{N}$ and $\mu$-almost every $x \in \cap_{n = N}^\infty Q_n$, 
\begin{equation}
\label{eq:fTildeLimit}
\tilde{f} \left( x \right)
=
\sup_{n \in \mathbb{N}} C_n \left( x, x \right)
\le
\sup_{n \in \mathbb{N}} B_n \left( x, x \right)
\le
\limsup_{n \in \mathbb{N}} \left[ C_n \left( x, x \right) + \frac{1}{n} \right]
=
\sup_{n \in \mathbb{N}} C_n \left( x, x \right) \, .
\end{equation}
Now the claim follows, because
\begin{equation}
\mu \left( \bigcap_{n = N}^\infty Q_n \right)
\ge
1 - 2^{1-N}
\xrightarrow{N \to \infty} 1\, .
\end{equation}
For \cref{eq:MeasureOfLimitBalls}, let $h \in \mathcal{H}(\mu)$ and $\epsilon > 0$.
By \cref{thm:InnerProductRepresentation}, there is a standard representation $(Y, B)$ of $f$ such that for all $x \in Y$, $C_n(x,x)$ and thus also $B_n(x,x)$ converges pointwise to $B(x,x)$.
Then, for each $n \in \mathbb{N}$, $Y \cap K^{B_{n+1}}_\epsilon (h) \subseteq Y \cap K^{B_n}_\epsilon (h)$ and
\begin{equation}
\mu \left( K^{f_n}_\epsilon (h) \right)
=
\mu \left( Y \cap K^{B_n}_\epsilon (h) \right) \, .
\end{equation}
Furthermore, \cref{eq:fTildeLimit} shows that $K^B(h) = \cap_{n \in \mathbb{N}} Y \cap K^{B_n}_\epsilon (h)$.
\end{proof}
Though these results suffice for this paper, there are certainly further interesting questions about these inner products:
\begin{itemize}
\item Is positive semidefiniteness necessary or can one treat general symmetric bilinear forms?
\item Is every pair $(Y, B)$ as in \cref{def:StandardRepresentation} a standard representation of some $f \in \mathcal{I}(\mu)$?
\item Is the separability condition necessary?
\item Is every (positive semidefinite) symmetric bilinear form on a $\mu$-Lusin linear subspace that is continuous on convex, weakly compact subsets the almost uniform limit of continuous symmetric bilinear forms?
\item Is there a converse to \cref{thm:InnerProductOrderDiagonal}?
\end{itemize}
\section{Wetterich's Equation}
Throughout this section we fix a quasicomplete, Hausdorff, locally convex space $X$, some point $w \in X$ and a Radon probability measure $\mu$ on $\mathcal{B}(X)$ for which $\exp[\phi] \in L^1(\mu)$ for all $\phi \in X^*$.
Futhermore, we shall make the assumption that there is a sequence $(K_n)_{n \in \mathbb{N}}$ of hereditarily separable, compact disks in $X$ with $\mu(\cup_{n \in \mathbb{N}} K_n) = 1$.
Again, if $X$ is a Souslin space, this assumption is automatically satisfied (see e.g. \cite[Corollary 6.7.8]{src:Bogachev:MeasureTheory}).
Denote by $\Xi$ the $\sigma$-algebra on $(X^*)_{\mathbb{C}}$ generated by the dual pair $(X_{\mathbb{C}}, (X^*)_{\mathbb{C}})$, i.e. $\Xi = \mathcal{E}((X^*)_{\mathbb{C}})$ when $(X^*)_{\mathbb{C}}$ is considered with its weak-$\ast$ topology.
\subsection{The Derivation}
Fix an increasing family $(I_k)_{k \ge 0}$ of $\mu_{-w}$-regular inner products on $X$, set $Q_k( x ) := \tilde{I}_k( x - w )$ for brevity and define the following probability measures on $\mathcal{B}(X)$:
\begin{equation}
\mu_k
=
\frac{\exp \left[ - \frac{1}{2} Q_k \right] \mu}{\left\Vert \exp \left[ - \frac{1}{2} Q_k \right] \right\Vert_{L^1(\mu)}}
=:
\frac{1}{N_k} \exp \left[ - \frac{1}{2} Q_k \right] \mu \, .
\end{equation}
Now, we demand that $(I_k)_{k \ge 0}$ is chosen such that the following hold:
\begin{enumerate}
\item For every $k \ge 0$, there is a $\sigma$-finite measure space $(A_k, \Sigma_k, \kappa_k)$ and a $(\Sigma_k, \Xi)$-measurable mapping $\omega^k : A_k \to (X^*)_\mathbb{C}, a \mapsto \omega^k_a$ such that for every $y \in \mathcal{H}(\mu_{-w})$, the function $a \mapsto \omega^k_a(y)$ is in $L^2_{\mathbb{C}}(\kappa_k)$ and
\begin{equation}
\label{eq:QlPrimeOnKernelOfMuMinusW}
D^+ \underline{I_k} \left( y, y \right) 
=
\int_{A_k} \left| \omega^k_a \left( y \right) \right|^2 \mathrm{d} \kappa_k \left( a \right)
 \, ,
\end{equation}
where the Dini derivative is taken with respect to $k$.
\end{enumerate}
For brevity, define $Q_k' : X \to \overline{\mathbb{R}}$ with
\begin{equation}
x \mapsto \int_{A_k} \left| \omega^k_a \left( x - w \right) \right|^2 \mathrm{d} \kappa_k \left( a \right) \, .
\end{equation}
\begin{enumerate}
\setcounter{enumi}{1}
\item For every $l \ge 0$, $Q_l'$ is $\mu$-measurable.
Furthermore, for every sequence $(k_n)_{n \in \mathbb{N}}$ in $[0, \infty) \setminus \{ l \}$ with $l = \lim_{n \to \infty} k_n$, there is a sequence $(r_n)_{n \in \mathbb{N}}$ in $(0, \infty)$ with $\lim_{n \to \infty} r_n = \infty$ and
\begin{equation}
\label{eq:QDerivativeCompatibility}
\lim_{n \to \infty} \int_X \exp \left[ \left| \frac{r_n \left( Q_l - Q_{k_n} \right)}{2 \left( l - k_n \right) } - \frac{r_n}{2} Q_l' \right| - \frac{Q_l}{2} \right] \mathrm{d} \mu
=
\int_X \exp \left[ - \frac{Q_l}{2} \right] \mathrm{d} \mu \, .
\end{equation}
\item For every $k \ge 0$, there is some $R > 0$ such that
\begin{equation}
\label{eq:QDerivativeBound}
\int_X \exp \left[ \frac{1}{2 R} Q_k' - \frac{1}{2} Q_k \right] \mathrm{d} \mu < \infty \, .
\end{equation}
\end{enumerate}
\begin{remark}
First, note that in the usual framework of Wetterich's equation, $w = 0$ (see e.g \cite{src:Gies:IntroductionToFRG}).
However, admitting other $w$ is meaningful in order to understand the corresponding boundary conditions as shown in \cref{sec:BoundaryConditions}.
Also, while the properties that $(I_k)_{k \ge 0}$ is demanded to have may look very technical at first glance, they are tailored towards being as general as possible to allow the proofs to work.
If possible, one could of course just pick continuous, symmetric and positive semidefinite representatives of $I_k$ eliminating the need for measure-theoretic arguments.
But if e.g. it is known that $\mu$ is supported on a smaller linear subspace with a natural stronger topology, it can be beneficial to use $\mu_{-w}$-regular inner products instead of continuous ones, because one then can keep $X^*$ and need not enlargen it to encompass the stronger topology.
It is of no surprise, that a certain compatibility between the family $(Q_k)_{k \ge 0}$ and the measure $\mu$ is necessary.
\cref{eq:QDerivativeCompatibility} is a rather strong differentiability requirement which is needed to show (rather crude) estimates with the Orlicz-Hölder inequality.
Similarly, \cref{eq:QDerivativeBound} adds strong restrictions on the derivative $Q_k'$.
Incidentally, by Fernique's theorem, \cref{eq:QDerivativeBound} is automatically satisfied if $\mu$ is absolutely continuous with respect to a Radon Gaußian mesaure $\nu$ with a density in $L^q(\nu)$ for some $q > 1$.
\end{remark}
Before actually defining the objects we are interested in, let us collect some important immediate consequences.
\begin{corollary}
\label{cor:QlPrimeAsLimitOfDifferenceQuotients}
For each $l \ge 0$ and every sequence $(k_n)_{n \in \mathbb{N}}$ in $[0, \infty) \setminus \{l\}$ converging to $l$, the functions $Q_{k_n}$ converge to $Q_l$ in $\mu$-measure.
Furthermore, $(Q_l - Q_{k_n})/(l - k_n)$ converges to $Q_l'$ in $\mu$-measure.
\end{corollary}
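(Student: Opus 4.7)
The plan is to establish the derivative convergence $(Q_l - Q_{k_n})/(l - k_n) \to Q_l'$ in $\mu$-measure first, and then to deduce the convergence $Q_{k_n} \to Q_l$ in $\mu$-measure as an easy consequence.

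For the derivative statement I argue by contradiction. Setting $D_n := (Q_l - Q_{k_n})/(l - k_n) - Q_l'$, suppose $D_n$ does not tend to $0$ in $\mu$-measure. Then after passing to a subsequence, still indexed by $n$, there is some $\epsilon > 0$ with $\mu(\{|D_n| \ge \epsilon\}) \ge \epsilon$ for every $n$. The second hypothesis on $(I_k)_{k \ge 0}$, applied to this subsequence, supplies a scaling sequence $r_n \to \infty$ for which \cref{eq:QDerivativeCompatibility} holds. Fixing any standard representation $(Y, B)$ of $I_l$, one has $B$ finite valued on $Y$ and $Y$ a $\mu_{-w}$-Lusin linear subspace, so $Q_l = \tilde{I}_l(\cdot - w)$ is $\mu$-almost everywhere finite; hence one may pick $M > 0$ with $\mu(\{Q_l \le M\}) > 1 - \epsilon/2$. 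On the intersection $\{|D_n| \ge \epsilon\} \cap \{Q_l \le M\}$, which has $\mu$-measure at least $\epsilon/2$, the integrand in \cref{eq:QDerivativeCompatibility} is bounded below by $\exp[r_n \epsilon/2 - M/2]$, so the left-hand side is bounded below by $(\epsilon/2) \exp[r_n \epsilon/2 - M/2] \to \infty$. This contradicts the assertion that the limit equals $\int_X \exp[-Q_l/2] \mathrm{d}\mu$, which is at most $1$ since $\mu$ is a probability measure and $Q_l \ge 0$.

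For the first claim I write $Q_l - Q_{k_n} = (l - k_n)(D_n + Q_l')$, where $l - k_n \to 0$ and $D_n + Q_l'$ tends in $\mu$-measure to $Q_l'$, which is $\mu$-almost everywhere finite — the latter follows from \cref{eq:QDerivativeBound} combined with the a.e.\ finiteness of $Q_l$, since the integrand $\exp[Q_l'/(2R) - Q_l/2]$ being integrable forces $Q_l' < \infty$ almost everywhere. The product of a null scalar sequence with a sequence of almost everywhere finite measurable functions converging in $\mu$-measure converges to $0$ in $\mu$-measure, yielding $Q_{k_n} \to Q_l$ in $\mu$-measure. The main obstacle is the subsequence bookkeeping with the hypothesis on $(r_n)$: the contradiction must be extracted by reapplying the hypothesis to the bad subsequence, since only then does \cref{eq:QDerivativeCompatibility} furnish the scaling that forces $r_n|D_n|$ to blow up on a set of non-vanishing measure.
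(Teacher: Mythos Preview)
Your argument is correct. The paper states this as an ``immediate consequence'' and supplies no proof at all, so there is nothing to compare against; your contradiction argument via \cref{eq:QDerivativeCompatibility} (applied to the bad subsequence to obtain the diverging scaling $r_n$) and the subsequent deduction of $Q_{k_n} \to Q_l$ from the difference-quotient convergence both fill in the omitted details cleanly.
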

\begin{lemma}
\label{lem:NkRightDiffability}
The function $[0, \infty) \to (0, \infty), k \mapsto N_k$ is right differentiable with derivative,
\begin{equation}
\label{eq:NkDerivative}
N_k'
=
- \frac{1}{2} \int_X Q_k' \exp \left[ - \frac{1}{2} Q_k \right] \mathrm{d} \mu \, .
\end{equation}
\end{lemma}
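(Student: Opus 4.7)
The plan is to fix $l \ge 0$, take an arbitrary sequence $k_n \searrow l$ with $h_n := k_n - l > 0$, and show that the difference quotient $(N_{k_n} - N_l)/h_n$ converges to $-\frac{1}{2} \int Q_l' e^{-Q_l/2} \, d\mu$. Monotonicity of $(I_k)_{k\ge 0}$ together with \cref{thm:InnerProductOrderDiagonal} yields $Q_l \le Q_{k_n}$ $\mu$-almost everywhere, so $D_n := (Q_{k_n} - Q_l)/h_n \ge 0$, and by \cref{cor:QlPrimeAsLimitOfDifferenceQuotients} $D_n \to Q_l'$ in $\mu$-measure. On the $\mu$-conull set where both $Q_l$ and $Q_{k_n}$ are finite, I will rewrite the integrand of the difference quotient as $e^{-Q_l/2}\bigl(e^{-h_n D_n/2} - 1\bigr)/h_n$, so that comparison with the target $-\frac{1}{2} Q_l' e^{-Q_l/2}$ becomes a pointwise question about $(e^{-h_n D_n/2} - 1)/h_n + Q_l'/2$.

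The elementary bound $|e^{-t} - 1 + t| \le t^2/2$ for $t \ge 0$, applied at $t = h_n D_n/2$, lets me split the total error into two manageable pieces via
\begin{equation*}
\left| \frac{N_{k_n} - N_l}{h_n} + \frac{1}{2} \int_X Q_l' e^{-Q_l/2} \, d\mu \right|
\le \frac{h_n}{8} \int_X D_n^2 e^{-Q_l/2} \, d\mu
+ \frac{1}{2} \int_X |D_n - Q_l'| e^{-Q_l/2} \, d\mu.
\end{equation*}
The task is then to show both summands vanish in the limit, and this is where conditions (2) and (3) enter. Setting $a_n := (r_n/2)(D_n - Q_l')$ with $r_n$ from hypothesis (2), that hypothesis reads $\int e^{|a_n| - Q_l/2} \, d\mu \to N_l$. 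Since $e^{|a_n|} - 1 \ge |a_n| + a_n^2/2 \ge 0$, I deduce $\int |a_n| e^{-Q_l/2} \, d\mu \to 0$ and $\int a_n^2 e^{-Q_l/2} \, d\mu \to 0$. The second summand above equals $(1/r_n) \int |a_n| e^{-Q_l/2} \, d\mu$, which vanishes.

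For the first summand I use $D_n = Q_l' + 2a_n/r_n$ to estimate $D_n^2 \le 2(Q_l')^2 + 8 a_n^2/r_n^2$. The bound $x^2 \le C_R \, e^{x/(2R)}$ for $x \ge 0$ (valid with $C_R = \sup_{x \ge 0} x^2 e^{-x/(2R)} < \infty$) together with hypothesis (3) shows $(Q_l')^2 e^{-Q_l/2} \in L^1(\mu)$, and the already-established convergence $\int a_n^2 e^{-Q_l/2} \, d\mu \to 0$ means $\int D_n^2 e^{-Q_l/2} \, d\mu$ stays uniformly bounded in $n$. Multiplied by $h_n \to 0$, this summand vanishes as well, completing the proof. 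The main obstacle is the judicious use of hypothesis (2): its unusual Orlicz-type formulation is precisely engineered to yield both the weighted $L^1$-vanishing of $D_n - Q_l'$ and enough control on $\int D_n^2 e^{-Q_l/2} \, d\mu$ to survive the $h_n$ prefactor, avoiding any direct attempt at dominated convergence.
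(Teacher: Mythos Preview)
Your proof is correct. Both arguments hinge on the same core observation: from hypothesis~(2) and the elementary bound $|x|\le e^{|x|}-1$ one extracts $\int_X |D_n - Q_l'|\,e^{-Q_l/2}\,\mathrm d\mu \to 0$, i.e.\ the difference quotients $D_n$ converge to $Q_l'$ in the weighted $L^1$ space. Where you diverge from the paper is in treating the remainder $e^{-t}-1+t$. The paper uses only the first-order Lipschitz bound $0\le e^{-t}-1+t\le t$, which dominates the remainder integrand by $\tfrac12 D_n e^{-Q_l/2}$; since this dominating sequence converges in $L^1$ it is uniformly integrable, and a Vitali/Pratt-type argument (left implicit in the paper's ``Hence'') finishes the job. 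Hypothesis~(3) is invoked only to certify that the claimed limit is finite. You instead use the second-order bound $e^{-t}-1+t\le t^2/2$, which trades the implicit uniform-integrability step for an explicit estimate on $\int D_n^2 e^{-Q_l/2}\,\mathrm d\mu$; this in turn requires a more substantial use of~(3) together with the $a_n^2$-control you squeeze out of~(2) via $e^{|x|}-1\ge |x|+x^2/2$. Your route is more self-contained and avoids appealing to generalised dominated convergence, at the cost of a slightly heavier reliance on hypothesis~(3).
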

\begin{proof}
First, note that the right-hand side of \cref{eq:NkDerivative} is finite by \cref{eq:QDerivativeBound}.
Now, let $l \ge 0$, $(k_n)_{n \in \mathbb{N}}$ be a strictly monotonically decreasing sequence in $[0, \infty)$ converging to $l$ and set $\nu = \exp[-Q_l / 2] \mu$.
Note that the function $x \mapsto e^x$ on $(-\infty, 0]$ has Lipschitz constant one and $Q_l \le Q_{k_n}$ $\mu$-almost everywhere by \cref{thm:InnerProductOrderDiagonal}.
Hence,
\begin{equation}
\label{eq:ExponentialQDifferenceDecreasingEstimate}
\begin{aligned}
&\left| \frac{1}{l - k_n} \left( \exp \left[ - \frac{Q_l}{2} \right] - \exp \left[ - \frac{Q_{k_n}}{2} \right] \right) \right| \\
&=
\frac{1}{k_n - l} \left( 1 - \exp \left[ \frac{Q_l - Q_{k_n}}{2} \right] \right) \exp \left[ - \frac{Q_l}{2} \right]
\le
\frac{Q_l - Q_{k_n}}{2 \left( l - k_n \right)} \exp \left[ - \frac{Q_l}{2} \right] \, ,
\end{aligned}
\end{equation}
By using the estimate $|x| \le e^{|x|} - 1$ and \cref{eq:QDerivativeCompatibility}, it is easy to see that
\begin{equation}
\lim_{n \to \infty} \int_X \left| \frac{Q_l - Q_{k_n}}{2 \left( l - k_n \right)} - Q_l' \right| \exp \left[ - \frac{Q_l}{2} \right] \mathrm{d} \mu
=
0 \, .
\end{equation}
Hence, $k \mapsto N_k$ is right-differentiable with the derivative given in \cref{eq:NkDerivative}.
\end{proof}
\begin{corollary}
\label{cor:LogNkAbsoluteContinuity}
For every $k > 0$, the function $a \mapsto \ln N_a$ is absolutely continuous.
\end{corollary}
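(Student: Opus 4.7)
The plan is to apply \cref{lem:FiniteDiniImpliesAbsoluteContinuity} to the function $f : [0, k] \to \mathbb{R}$, $a \mapsto -\ln N_a$. Three properties need to be verified: that $f$ is monotonically increasing, continuous, and has a finite right Dini derivative at every point of $[0, k)$. Monotonicity is immediate from the assumption that $(I_a)_{a \ge 0}$ is increasing: by \cref{thm:InnerProductOrderDiagonal}, $a \le b$ gives $Q_a \le Q_b$ $\mu$-almost everywhere, hence $N_a \ge N_b$ and $f(a) \le f(b)$.

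For continuity of $a \mapsto N_a$ at a point $l \in [0,\infty)$, I would reduce to monotone sequences $a_n \to l$. The $\mu$-almost everywhere monotonicity of $(Q_a)_a$ together with the convergence $Q_{a_n} \to Q_l$ in $\mu$-measure (a consequence of \cref{cor:QlPrimeAsLimitOfDifferenceQuotients}) upgrades to $\mu$-almost everywhere monotone convergence $Q_{a_n} \to Q_l$. Dominated convergence with the bound $e^{-Q_a/2} \le 1$ then yields $N_{a_n} \to N_l$, and since $N_l > 0$ this transfers to continuity of $\ln N$.

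For the Dini derivative, \cref{lem:NkRightDiffability} already shows $a \mapsto N_a$ is right differentiable. The finiteness of $N_a'$ follows by bounding $Q_a' e^{-Q_a/2} \le 2R \exp[Q_a'/(2R) - Q_a/2]$ using the elementary inequality $x \le e^x$, and invoking \eqref{eq:QDerivativeBound}. Since $N_a > 0$, $f$ inherits right differentiability by the chain rule, with finite right derivative $-N_a'/N_a$, so in particular $D^+ f(a) \in \mathbb{R}$ for every $a \in [0, k)$.

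With all three properties in hand, \cref{lem:FiniteDiniImpliesAbsoluteContinuity} yields absolute continuity of $f$ on $[0, k]$, and hence of $\ln N$. The most delicate step is the continuity of $N_a$: the $Q_a$ are only well-defined modulo $\mu$-null sets, so one must carefully exploit monotonicity to upgrade convergence in measure to $\mu$-almost everywhere convergence before dominated convergence can be applied.
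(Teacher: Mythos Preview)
Your proposal is correct and follows essentially the same route as the paper: verify that $a \mapsto -\ln N_a$ is continuous (via \cref{cor:QlPrimeAsLimitOfDifferenceQuotients} and dominated convergence), monotone increasing, and right differentiable (via \cref{lem:NkRightDiffability}), then invoke \cref{lem:FiniteDiniImpliesAbsoluteContinuity}. The only minor difference is that you re-argue finiteness of $N_a'$, whereas the paper already records this at the start of the proof of \cref{lem:NkRightDiffability}; your extra care about upgrading convergence in measure to almost-everywhere convergence is not strictly needed, since dominated convergence in measure suffices, but it does no harm.
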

\begin{proof}
Using the dominated convergence theorem and \cref{cor:QlPrimeAsLimitOfDifferenceQuotients}, the function $a \mapsto - \ln N_a$ continuous.
Since it is also monotonically increasing and right differentiable, \cref{lem:FiniteDiniImpliesAbsoluteContinuity} applies.
\end{proof}
With regards to the constructions in \cref{sec:Preliminaries}, in the following we shall for brevity write $k$ as an index instead of $\mu_k$.
The goal of this paper is to study the convex conjugates of the functions $V_k : X^* \to \mathbb{R}$ given by $\phi \mapsto \ln \int_X \exp[\phi] \mathrm{d} \mu_k$ and the first observation is that the domain of the convex conjugate $V_k^*$ should be some subset of $\tilde{X}$.
Before restricting ourselves to a particularly useful subset, note that we immediately obtain a continuity property using the monotonicity of $k \mapsto Q_k$ and a $\Gamma$-convergence-like argument.
\begin{theorem}
\label{thm:VAstContinuity}
Let $y \in \tilde{X}$ and define $V_k^*(y) = \sup_{\phi \in X^*} [ y(\phi) - V_k ( \phi ) ]$.
Then, for every $l > 0$ with $V_l^*(y) < \infty$, the function $V^*(y) : [0, l] \to \mathbb{R}, k \mapsto V_k^*(y)$ is continuous.
\end{theorem}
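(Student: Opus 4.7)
The plan is to decompose $V_k^*(y) = W_k^*(y) + \ln N_k$ with $W_k(\phi) = \ln \int_X \exp[\phi - Q_k/2] \,\mathrm{d}\mu$, so that continuity of $V_k^*(y)$ reduces to that of $k \mapsto W_k^*(y)$ on $[0,l]$ via \cref{cor:LogNkAbsoluteContinuity}. Three structural observations guide the rest of the argument. First, \cref{thm:InnerProductOrderDiagonal} applied to the increasing family $(I_k)$ shows that $Q_k$ is $\mu$-almost everywhere nondecreasing in $k$, so $W_k$ is pointwise nonincreasing and hence $W_k^*$ is nondecreasing with $W_k^*(y) \le W_l^*(y) < \infty$ throughout $[0,l]$. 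Secondly, \cref{cor:QlPrimeAsLimitOfDifferenceQuotients} combined with $\exp[\phi] \in L^1(\mu)$ yields, via dominated convergence, continuity of $k \mapsto W_k(\phi)$ for every $\phi \in X^*$. Thirdly, $k \mapsto W_k^*(y)$ is thus lower semi-continuous, being a pointwise supremum of functions continuous in $k$.

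Left continuity at any $k \in (0,l]$ is then immediate. For $k_n \nearrow k$ the function $g_n(\phi) = y(\phi) - W_{k_n}(\phi)$ is pointwise increasing in $n$ with pointwise limit $y(\phi) - W_k(\phi)$, so the elementary interchange $\sup_\phi \lim_n g_n = \lim_n \sup_\phi g_n$ valid for monotone increasing families delivers $W_{k_n}^*(y) \to W_k^*(y)$ without further work.

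The essential obstacle is right continuity at $k \in [0,l)$. For $k_n \searrow k$ I would pick approximate maximisers $\phi_n \in X^*$ with $W_{k_n}^*(y) \le y(\phi_n) - W_{k_n}(\phi_n) + 1/n$; since $y(\phi_n) - W_k(\phi_n) \le W_k^*(y)$, monotonicity gives
\[
0 \le W_{k_n}^*(y) - W_k^*(y) \le W_k(\phi_n) - W_{k_n}(\phi_n) + \frac{1}{n} ,
\]
and the right-hand difference equals $\ln E_{\rho_n}[\exp[(Q_{k_n} - Q_k)/2]]$ with $\mathrm{d}\rho_n/\mathrm{d}\mu \propto \exp[\phi_n - Q_{k_n}/2]$. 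Applying $\ln(1+x) \le x$ and $e^x - 1 \le xe^x$ for $x \ge 0$, together with the change of measure $e^{(Q_{k_n}-Q_k)/2} e^{\phi_n - Q_{k_n}/2} = e^{\phi_n - Q_k/2}$, this is at most
\[
\frac{1}{2 Z_n} \int_X \left( Q_{k_n} - Q_k \right) \exp\!\left[\phi_n - \frac{Q_k}{2}\right] \mathrm{d}\mu
\qquad\text{with } Z_n = \int_X e^{\phi_n - Q_{k_n}/2}\,\mathrm{d}\mu .
\]
The compatibility condition \cref{eq:QDerivativeCompatibility} then linearises $(Q_{k_n} - Q_k)/(k_n - k)$ to $Q_k'$ in the exponential Orlicz sense driven by $e^{-Q_l/2}\,\mathrm{d}\mu$, and combined with the exponential bound \cref{eq:QDerivativeBound} an Orlicz--Hölder estimate should control the ratio above uniformly in $\phi_n$ by a constant multiple of $|k_n - k|$, which vanishes as $n \to \infty$. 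Producing this uniform-in-$\phi_n$ bound is the main analytic hurdle; the hypotheses \cref{eq:QDerivativeCompatibility,eq:QDerivativeBound} are designed precisely to make it go through.
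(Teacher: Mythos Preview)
Your decomposition $V_k^*(y)=W_k^*(y)+\ln N_k$ and your left-continuity argument are correct; the monotone interchange $\sup_\phi\sup_n=\sup_n\sup_\phi$ is in fact slicker than what the paper does, which splits left-continuity into the two halves $\limsup_{a\nearrow b}V_a^*(y)\le V_b^*(y)$ (from the uniform inequality $V_a^*(y)\le V_b^*(y)+\ln(N_a/N_b)$) and $\liminf_{a\nearrow b}V_a^*(y)\ge V_b^*(y)$ (by testing a maximising sequence for $V_b^*$ and using dominated convergence for $V_{a_m}(\phi_n)\to V_b(\phi_n)$).

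The genuine divergence from the paper is at right-continuity. The paper does \emph{not} invoke \cref{eq:QDerivativeCompatibility} or \cref{eq:QDerivativeBound} here at all: after proving left-continuity it simply writes ``the right-continuity at $a$ follows analogously'', using only the same two elementary ingredients (the monotone inequality and pointwise convergence of $V_k(\phi)$). So your Orlicz--H\"older route is considerably heavier machinery than what the paper deploys, and if it can be completed it gives a different, more quantitative argument (essentially a Lipschitz-type bound rather than bare continuity).

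That said, your instinct that right-continuity is where the work lies is well founded. If one tries to run the paper's two ingredients with $b_m\searrow a$, the monotone inequality yields only $\liminf_{m}V_{b_m}^*(y)\ge V_a^*(y)$, and testing a maximising sequence for the limit point $V_a^*$ again only produces a lower bound on $V_{b_m}^*(y)$; neither delivers $\limsup_m V_{b_m}^*(y)\le V_a^*(y)$ in an obvious way. So the paper's ``analogously'' glosses over exactly the asymmetry you isolate. Your sketch attacks it head-on, but---as you yourself flag---the uniform-in-$\phi_n$ control of
\[
\frac{1}{2Z_n}\int_X (Q_{k_n}-Q_k)\,e^{\phi_n-Q_k/2}\,\mathrm{d}\mu
\]
is not established: the Orlicz norms you would need to pair involve $e^{\phi_n-Q_k/2}$, and without some a priori bound on the maximisers $\phi_n$ (which the hypotheses do not obviously supply) the estimate does not close. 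This is a real gap in your proposal, not just a matter of writing out details.
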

\begin{proof}
Note that for all $0 \le a \le b \le l$ and all $\phi \in X^*$,
\begin{equation}
V_a \left( \phi \right)
=
\ln \int_X \exp \left[ \phi \right] \mathrm{d} \mu_a
\ge
\ln \int_X \exp \left[ \phi \right] \mathrm{d} \mu_b
+
\ln \frac{N_b}{N_a}
=
V_b \left( \phi \right)
+
\ln \frac{N_b}{N_a} \, .
\end{equation}
Hence,
\begin{equation}
\label{eq:VAstUppersemicontinuityContinuityFromTheLeft}
V_a^* \left( y \right)
\le
V_b^* \left( y \right)
-
\ln \frac{N_b}{N_a}
\quad\text{and using \cref{cor:LogNkAbsoluteContinuity},}\quad
\limsup_{a \nearrow b} V_a^* \left( y \right) \le V_b^* \left( y \right) \, .
\end{equation}
Now, let $(\phi_n)_{n \in \mathbb{N}}$ be a sequence in $X^*$ with $\lim_{n \to \infty} [ y ( \phi_n ) - V_b( \phi_n ) ] = V_b^*(y)$ and let $(a_n)_{n \in \mathbb{N}}$ be a monotonically increasing sequence in $[0, b]$ with $\sup_{n \in \mathbb{N}} a_n = b$.
Recalling that $Q_{a_n} \le Q_b$ $\mu$-almost everywhere and that $Q_{a_n}$ converges to $Q_b$ in $\mu$-measure, we have $\lim_{m \to \infty} V_{a_m}( \phi_n ) = V_b( \phi_n )$ for all $n \in \mathbb{N}$, by the dominated convergence theorem using the fact that $\exp[\phi_n] \in L^1(\mu)$.
Hence, for every $n \in \mathbb{N}$, we find some $N \in \mathbb{N}$ such that for all $m \in \mathbb{N}_{\ge N}$,
\begin{equation}
V_{a_m} \left( \phi_n \right) \le V_b \left( \phi_n \right) + \frac{1}{n} \, .
\end{equation}
Consequently,
\begin{equation}
\liminf_{m \to \infty} V_{a_m}^* \left( y \right)
\ge
\liminf_{m \to \infty} \left[
y \left( \phi_n \right) - V_{a_m} \left( \phi_n \right)
\right]
\ge
y \left( \phi_n \right) - V_b \left( \phi_n \right) - \frac{1}{n} \, .
\end{equation}
Thus, $\liminf_{m \to \infty} V_{a_m}^* ( y ) \ge V_b^*( y )$ and with \cref{eq:VAstUppersemicontinuityContinuityFromTheLeft}, $V^*(y)$ is left-continuous at $b$.
The right-continuity at $a$ follows analogously and the claim follows.
\end{proof}
However, it is à priori much less clear on which subset of $\tilde{X}$ one should expect Wetterich's equation to hold.
In \cite{src:Ziebell:RigorousFRG}, the author considered measures $\mu$ that are absolutely continuous with respect to a suitable centred Radon Gaußian measure $G$ and the domain of the effective average action turned out to be the Cameron-Martin space of $G$.
Guided by that result, we define the domain of $V_k^*$ to be $\mathcal{A}(\mu_k) = \mathcal{A}(\mu)$ for all $k \ge 0$.
To see the benefit of this restriction, let $\nu_k = (\widetilde{\mu_k})_{- m_k}$ denote the translated version of $\mu_k$ on $\mathcal{B}(\tilde{X})$ with zero mean.
Then it is clear from \cref{thm:AffineKernelContainedInShiftedKernel} that 
\begin{equation}
\label{eq:AffineKernelInMeanShift}
\mathcal{A}(\mu) \subseteq m_k + \mathcal{H}( \nu_k )
\end{equation}
for all $k \ge 0$.
Hence, for every $y \in \mathcal{A}(\mu)$ and all $k, l \ge 0$, there is some $z \in \mathcal{H}( \nu_l )$ such that
\begin{equation}
y \left( \phi \right) - V_k \left( \phi \right)
=
z \left( \phi \right) - \ln \int_X \exp \left[ \phi - m_l \left( \phi \right) \right] \mathrm{d} \mu_k
\end{equation}
for all $\phi \in X^*$.
The significance of the above equation translates to the following fact about the function $V_{k,l} : \mathcal{K}(\mu_l) \to \mathbb{R}$ for $0 \le k \le l$ given by $T \mapsto \ln \int_X \exp[ T ] \mathrm{d} \mu_k$. 
\begin{lemma}
$V_{k,l}$ is finite everywhere, lower semicontinuous and norm-coercive, i.e.
\begin{equation}
\lim_{\Vert T \Vert \to \infty} V_{k,l}(T) = \infty \, .
\end{equation}
Moreover $\mathrm{int}\, \mathrm{dom}\, V_{k,l}^*$ is nonempty.
\end{lemma}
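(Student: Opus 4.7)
The plan is to address the four claims in sequence. For finiteness, I note that $I_k$ is positive semidefinite, so $Q_k \ge 0$ $\mu$-a.e., whence the density of $\mu_k$ with respect to $\mu$ is bounded by $1/N_k$. For $T = [\phi - m_l(\phi)]_{\mu_l}$ this yields $\int_X \exp[T]\,\mathrm{d}\mu_k \le e^{-m_l(\phi)}\|\exp[\phi]\|_{L^1(\mu)}/N_k < \infty$ by the running assumption that $\exp[\phi] \in L^1(\mu)$ for every $\phi \in X^*$.

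For lower semicontinuity, the decisive observation is that $\mu_k$ and $\mu_l$ are mutually absolutely continuous: both densities with respect to $\mu$ are $\mu$-a.e.\ strictly positive (as $Q_k$ and $Q_l$ are $\mu$-a.e.\ finite). Convergence in $L^1(\mu_l)$ thus yields convergence in $\mu_k$-measure as well, and the standard sub-subsequence argument together with Fatou's lemma applied to $\exp[T_{n_j}]$ delivers $V_{k,l}(T) \le \liminf_n V_{k,l}(T_n)$.

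The main work lies in coercivity. Since $k \le l$, \cref{thm:InnerProductOrderDiagonal} gives $Q_k \le Q_l$ $\mu$-a.e., so $\mathrm{d}\mu_k/\mathrm{d}\mu_l \ge N_l/N_k$ and it suffices to bound $\log\int\exp[T]\,\mathrm{d}\mu_l$ from below. Set $M = \|T\|_{L^1(\mu_l)}$; since $\int T\,\mathrm{d}\mu_l = 0$, the positive part satisfies $\int T^+\,\mathrm{d}\mu_l = M/2$. Writing $p = \mu_l(\{T > 0\})$, which is positive unless $T$ vanishes in $L^1(\mu_l)$, Jensen's inequality applied to the conditional probability measure $\mu_l(\,\cdot\,\mid T > 0)$ yields
\begin{equation*}
\int_X \exp[T]\,\mathrm{d}\mu_l \ge p\exp[M/(2p)]\,.
\end{equation*}
A short calculus exercise shows that for $M \ge 2$ the infimum over $p \in (0,1]$ is attained at $p = 1$ with value $\exp[M/2]$, producing the linear lower bound $V_{k,l}(T) \ge \log(N_l/N_k) + \|T\|_{L^1(\mu_l)}/2$ whenever $\|T\|_{L^1(\mu_l)} \ge 2$. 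I expect this to be the most delicate step, since a direct application of Jensen to $\mu_l$ alone only gives the noncoercive bound $V_{k,l}(T) \ge \log(N_l/N_k)$; the restriction to the event $\{T > 0\}$ is what forces linear growth.

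Finally, linear coercivity already implies that $\mathrm{int}\,\mathrm{dom}\,V_{k,l}^*$ is nonempty. For any $\xi$ in the continuous dual of $\mathcal{K}(\mu_l)$ with dual norm strictly less than $1/2$, I would split the supremum defining $V_{k,l}^*(\xi)$ according to whether $\|T\|_{L^1(\mu_l)} \ge 2$ or $<2$: on the former region the coercivity bound gives $\xi(T) - V_{k,l}(T) \le (\|\xi\|_* - 1/2)\|T\|_{L^1(\mu_l)} - \log(N_l/N_k)$, bounded above, while on the latter the uniform lower bound $V_{k,l}(T) \ge \log(N_l/N_k)$ combined with $|\xi(T)| \le 2\|\xi\|_*$ yields a bound as well. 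Hence $V_{k,l}^*(\xi) < \infty$ on the open ball of radius $1/2$ about the origin in the dual, and in particular $0 \in \mathrm{int}\,\mathrm{dom}\,V_{k,l}^*$.
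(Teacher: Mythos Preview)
Your argument is correct. The finiteness and lower semicontinuity steps match the paper's reasoning essentially verbatim (the equivalence $\mu_k \sim \mu_l$ plus Fatou). The interesting divergence is in the coercivity estimate. The paper, after reducing to $\mu_l$ via $\mathrm{d}\mu_k/\mathrm{d}\mu_l \ge N_l/N_k$ and using $\int T\,\mathrm{d}\mu_l = 0$, invokes the pointwise inequality $e^t - t \ge \max\{1,|t|\}$ to obtain
\[
V_{k,l}(T) \ge \ln\frac{N_l}{N_k} + \ln \int_X \max\{1,|T|\}\,\mathrm{d}\mu_l,
\]
a merely \emph{logarithmic} lower bound in $\|T\|_{L^1(\mu_l)}$, which suffices for coercivity and for the nonemptiness of $\mathrm{int}\,\mathrm{dom}\,V_{k,l}^*$ via \cite[Exercise~2.41]{src:Zalinescu:ConvexAnalysisInGeneralVectorSpaces}. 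Your route---Jensen on the conditional measure $\mu_l(\,\cdot\,|T>0)$ followed by the calculus observation that $p\mapsto p\,e^{M/(2p)}$ is decreasing on $(0,1]$ when $M\ge 2$---yields the sharper \emph{linear} bound $V_{k,l}(T) \ge \ln(N_l/N_k) + \|T\|_{L^1(\mu_l)}/2$ for $\|T\|\ge 2$. This costs a little more effort but buys an explicit radius ($1/2$) for the ball around $0$ contained in $\mathrm{dom}\,V_{k,l}^*$, which you then verify by hand rather than citing the exercise. Both approaches are entirely sound; the paper's is shorter, yours is more quantitative.
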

\begin{proof}
It is clear that $V_{k,l}$ is finite everywhere because $\exp[\phi] \in L^1(\mu)$ for all $\phi \in X^*$.
The lower semicontinuity follows immediately from the equivalence $\mu_k \sim \mu_l$ and Fatou's lemma.
Furthermore, since $\int_X T \mathrm{d} \mu_l = 0$ for all $T \in \mathcal{K}(\mu_l)$,
\begin{equation}
\begin{aligned}
\ln \int_X \exp[ T ] \mathrm{d} \mu_k
&\ge
\ln \frac{N_l}{N_k}
+
\ln \int_X \exp[ T ] \mathrm{d} \mu_l \\
&=
\ln \frac{N_l}{N_k}
+
\ln \int_X \left( \exp[ T ] - T \right) \mathrm{d} \mu_l
\ge
\ln \frac{N_l}{N_k}
+
\ln \int_X \max \left\{ 1, \left| T \right| \right\} \mathrm{d} \mu_l \, .
\end{aligned}
\end{equation}
It follows immediately from \cite[Exercise 2.41]{src:Zalinescu:ConvexAnalysisInGeneralVectorSpaces} that $0 \in \mathrm{int}\, \mathrm{dom}\, V_{k,l}^*$.
\end{proof}
Hence, instead of studying $V_k$ which may not be coercive at all, we can study $V_{k, l}$ which has the above coercivity property.
Also, since Wetterich's equation involves derivatives of the functions $V_k^*$, we should look for differentiability properties of $V_{k, l}^*$.
It is known that these are intimately connected to minimisation properties of the functions $V_{k,l}$.
However, the space $\mathcal{K}(\mu_l)$ is in general too small to exhibit minima such that we consider the function $\overline{W}_{k,l} : \mathcal{M}(\mu_l) \to \overline{\mathbb{R}}$ given as the largest lower semicontinuous function on $\mathcal{M}(\mu_l)$ that is smaller than $V_{k,l}$ on $\mathcal{K}(\mu_l)$.
Furthermore, we set $W_{k,l} = \overline{W}_{k,l} \upharpoonright \mathcal{L}(\mu_l)$.
\begin{lemma}
\label{lem:WProperties}
\begin{enumerate}
\item $V_{k,l}$, $W_{k,l}$ and $\overline{W}_{k,l}$ coincide on $\mathcal{K}(\mu_l)$,
\item $T \in \mathrm{dom}\, \overline{W}_{k,l} \implies \overline{W}_{k,l} (T) = \ln \int_X \exp[T] \mathrm{d} \mu_k$,
\item $T \in \mathrm{dom}\, \overline{W}_{k,l}  \implies T \in \mathcal{L}(\mu_l)$ and $\int_X T \mathrm{d} \mu_l \ge 0$,
\item $\mathrm{gr}\, V_{k,l}$ is dense in $\mathrm{gr}\, W_{k,l}$ with the subspace topology induced by $(\mathcal{L}(\mu_l), \overline{\tau_l}) \times \mathbb{R}$.
\end{enumerate}
\end{lemma}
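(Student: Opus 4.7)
The plan is to prove the four items in the order (1), (2), the integrability half of (3), (4), and then deduce the sign inequality in (3) from (4). The central tools are Fatou's lemma combined with subsequence extraction exploiting the metrisability of convergence in $\mu_l$-measure, together with the equivalence $\mu_k \sim \mu_l$. For (1), I would first verify that $V_{k,l}$ is lower semicontinuous on $\mathcal{K}(\mu_l)$ in $\mu_l$-measure: given $T_n \to T$ in measure, any subsequence admits a further $\mu_k$-a.e.\ convergent subsequence, and Fatou applied to $\exp[T_{n_m}]$ yields $V_{k,l}(T) \le \liminf V_{k,l}(T_n)$. Since in this first-countable setting $\overline{W}_{k,l}$ is the sequential lower semicontinuous envelope of $V_{k,l}$ (extended by $+\infty$ outside $\mathcal{K}(\mu_l)$), this lsc immediately forces $\overline{W}_{k,l} = V_{k,l}$ on $\mathcal{K}(\mu_l)$, and hence also $W_{k,l} = V_{k,l}$ there.

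For (2), the function $F : T \mapsto \ln \int_X \exp[T]\, \mathrm{d}\mu_k$ on $\mathcal{M}(\mu_l)$ is lsc by the same Fatou argument and coincides with $V_{k,l}$ on $\mathcal{K}(\mu_l)$, so $F \le \overline{W}_{k,l}$ by maximality. Conversely, for $T \in \mathrm{dom}\, \overline{W}_{k,l}$ choose $T_n \in \mathcal{K}(\mu_l)$ with $T_n \to T$ in $\mu_l$-measure and $V_{k,l}(T_n) \to \overline{W}_{k,l}(T)$; Fatou along an a.e.-convergent subsequence gives $\int \exp[T]\, \mathrm{d}\mu_k \le \liminf \int \exp[T_n]\, \mathrm{d}\mu_k = \exp[\overline{W}_{k,l}(T)]$. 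The integrability half of (3) then follows by applying the coercivity bound $V_{k,l}(T_n) \ge \ln(N_l/N_k) + \ln \int \max\{1,|T_n|\}\, \mathrm{d}\mu_l$ from the preceding lemma to the same minimising sequence: $(T_n)$ is bounded in $L^1(\mu_l)$, so Fatou yields $T \in L^1(\mu_l) \cap \mathcal{M}(\mu_l) = \mathcal{L}(\mu_l)$.

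The main obstacle is (4): upgrading the measure convergence of this minimising sequence to $\overline{\tau_l}$-convergence, i.e.\ additionally $\int (T_n - T)^+\, \mathrm{d}\mu_l \to 0$. First, $\exp[T_n] \ge 0$ converges in $\mu_k$-measure to $\exp[T]$ with $\int \exp[T_n]\, \mathrm{d}\mu_k \to \int \exp[T]\, \mathrm{d}\mu_k < \infty$ by (2), so Scheffé's lemma (applied via a standard subsequence argument) gives $\exp[T_n] \to \exp[T]$ in $L^1(\mu_k)$, and hence also in $L^1(\mu_l)$ because $\mathrm{d}\mu_l/\mathrm{d}\mu_k \le N_k/N_l$ thanks to $Q_l \ge Q_k$ $\mu$-a.e.\ by \cref{thm:InnerProductOrderDiagonal}. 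Next, the elementary bound $e^y - 1 \ge y$ yields $\exp[T] \cdot (T_n - T)^+ \le (\exp[T_n] - \exp[T])^+$, so $\int_{\{T \ge -M\}} (T_n - T)^+\, \mathrm{d}\mu_l \le e^M \int (\exp[T_n] - \exp[T])^+\, \mathrm{d}\mu_l \to 0$ for each fixed $M$. On the complementary tail $\{T < -M\}$ I would use $(T_n - T)^+ \le T_n^+ + |T|$, uniform integrability of $\{T_n^+\}$ in $\mu_l$ (since $T_n^+ \le \exp[T_n]$ and $L^1$-convergent sequences are uniformly integrable) together with $|T| \in L^1(\mu_l)$, to make both tail contributions vanish uniformly in $n$ as $M \to \infty$.

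Finally, the sign inequality in (3) drops out of (4): $0 = \int T_n\, \mathrm{d}\mu_l = \int T\, \mathrm{d}\mu_l + \int (T_n - T)^+\, \mathrm{d}\mu_l - \int (T_n - T)^-\, \mathrm{d}\mu_l$, so $\int (T_n - T)^-\, \mathrm{d}\mu_l \to \int T\, \mathrm{d}\mu_l$, and nonnegativity of the left side forces $\int T\, \mathrm{d}\mu_l \ge 0$.
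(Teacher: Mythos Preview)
Your argument for (ii) has a genuine gap: both of your steps establish the \emph{same} inequality $F(T)\le\overline W_{k,l}(T)$, where $F(T)=\ln\int_X\exp[T]\,\mathrm d\mu_k$. The maximality argument gives $F\le\overline W_{k,l}$ because $F$ is lsc and agrees with $V_{k,l}$ on $\mathcal K(\mu_l)$; your ``converse'' Fatou computation $\int\exp[T]\,\mathrm d\mu_k\le\liminf\int\exp[T_n]\,\mathrm d\mu_k=\exp[\overline W_{k,l}(T)]$ is again $F(T)\le\overline W_{k,l}(T)$. The direction you are missing, $\overline W_{k,l}(T)\le F(T)$, is not free: one needs to produce a sequence $S_n\to T$ in measure with $\int\exp[S_n]\,\mathrm d\mu_k\to\int\exp[T]\,\mathrm d\mu_k$, and nothing you have written rules out a strict Fatou gap along the minimising sequence. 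The paper handles this by a $1/p$ trick: boundedness of $\int\exp[T_n]\,\mathrm d\mu_k$ forces uniform $\mu_k$-integrability of $\exp[T_n/p]$ for each $p>1$ (de la Vall\'ee Poussin with $\Phi(x)=x^p$), so $\overline W_{k,l}(T/p)=\ln\int\exp[T/p]\,\mathrm d\mu_k$; then lower semicontinuity of $\overline W_{k,l}$ at $T$ together with H\"older's inequality $\ln\int\exp[T/p]\,\mathrm d\mu_k\le\frac1p\ln\int\exp[T]\,\mathrm d\mu_k$ gives the missing bound as $p\searrow 1$.

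This gap propagates: your proof of (iv) invokes Scheff\'e via ``$\int\exp[T_n]\,\mathrm d\mu_k\to\int\exp[T]\,\mathrm d\mu_k$ by (2)'', which is precisely the unproven direction. By contrast, the paper's route to (iv) avoids (ii) altogether: from $\sup_n\int\exp[T_n]\,\mathrm d\mu_k<\infty$ one gets $(T_n^+)$ bounded in $L^2(\mu_k)$, hence uniformly $\mu_k$- and (via the bounded density $\mathrm d\mu_l/\mathrm d\mu_k\le N_k/N_l$) uniformly $\mu_l$-integrable; then $(T_n-T)^+\le T_n^+ + T^-$ is uniformly $\mu_l$-integrable and converges to $0$ in measure, so Vitali gives $\overline{p_l}(T_n-T)\to 0$ directly. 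Once (ii) is repaired, your Scheff\'e/tail-splitting argument for (iv) and your derivation of the sign in (iii) from (iv) are correct and constitute a legitimate alternative ordering; but as written the plan is circular at the point where you appeal to (2).
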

\begin{proof}
$(i)$:
By definition, $\overline{W}_{k,l} \upharpoonright \mathcal{K}(\mu_l) \le V_{k,l}$ and since $\mathcal{M}(\mu)$ is first-countable,
\begin{equation}
\label{eq:WbarDefinition}
\begin{aligned}
\overline{W}_{k,l} \left( T \right)
&=
\inf \left\{ \liminf_{n \to \infty} V_{k,l} \left( T_n \right) : (T_n)_{n \in \mathbb{N}} \text{ in } \mathcal{K}(\mu_l) \text{ with } T_n \to T \text{ in $\mu_l$-measure} \right\} \\
&\ge
\ln \int_X \exp [ T ] \,\mathrm{d} \mu_k \, ,
\end{aligned}
\end{equation}
for all $T \in \mathcal{M}(\mu)$ by Fatou's lemma, since $\mu_k \sim \mu_l$.
Hence, $V_{k,l}$, $W_{k,l}$ and $\overline{W}_{k,l}$ coincide on $\mathcal{K}(\mu_l)$.

$(ii)$:
Clearly, $\mathrm{gr}\, V_{k,l}$ is dense in $\mathrm{gr}\, \overline{W}_{k,l}$.
Let $T \in \mathrm{dom}\, \overline{W}_{k,l}$ and $(T_n)_{n \in \mathbb{N}}$ be a sequence in $\mathrm{dom}\, V_{k,l}$ such that
\begin{equation}
T_n \to T \text{ in $\mu_l$-measure}
\qquad\text{and}\qquad
\lim_{n \to \infty} \ln \int_X \exp \left[ T_n \right] \mathrm{d} \mu_k = \overline{W}_{k,l} \left( T \right) \, .
\end{equation}
Then, $T_n$ also converges in $\mu_k$-measure such that for all $p > 1$, we have that $(\exp [T_n / p])_{n \in \mathbb{N}}$ is uniformly $\mu_k$-integrable and
\begin{equation}
\lim_{n \to \infty} \ln \int_X \exp \left[ \frac{T_n}{p} \right] \mathrm{d} \mu_k
=
\ln \int_X \exp \left[ \frac{T}{p} \right] \mathrm{d} \mu_k
<
\infty \, .
\end{equation}
By \cref{eq:WbarDefinition}, it follows that
\begin{equation}
\overline{W}_{k,l} \left( \frac{T}{p} \right) = \ln \int_X \exp \left[ \frac{T}{p} \right] \mathrm{d} \mu_k \, .
\end{equation}
Applying Hölders inequality and the lower semicontinuity of $W_{k,l}$,
\begin{equation}
\begin{aligned}
\overline{W}_{k,l} \left( T \right)
&\le
\liminf_{p \searrow 1} \overline{W}_{k,l} \left( \frac{T}{p} \right)
\le
\liminf_{p \searrow 1} \frac{1}{p} \ln \int_X \exp \left[ T \right] \mathrm{d} \mu_k \\
&=
\ln \int_X \exp \left[ T \right] \mathrm{d} \mu_k
\le
\overline{W}_{k,l} \left( T \right) \, .
\end{aligned}
\end{equation}

$(iii)$:
Let $T$ and $(T_n)_{n \in \mathbb{N}}$ be as above.
Then $( \max\{ T_n, 0 \} )_{n \in \mathbb{N}}$ is uniformly $\mu_k$-integrable and hence uniformly $\mu_l$-integrable such that
\begin{equation}
\lim_{n \to \infty} \int_X \max \left\{ T_n, 0 \right\} \mathrm{d} \mu_l
=
\int_X \max \left\{ T, 0 \right\} \mathrm{d} \mu_l \, .
\end{equation}
Moreover, $\int_X T_n \mathrm{d} \mu_l = 0$ such that by Fatou's lemma,
\begin{equation}
\int_X \max \left\{ T, 0 \right\} \mathrm{d} \mu_l
=
\lim_{n \to \infty} \int_X \max \left\{ - T_n, 0 \right\} \mathrm{d} \mu_l
\ge
\int_X \max \left\{ - T, 0 \right\} \mathrm{d} \mu_l \, .
\end{equation}
Hence, $T \in \mathcal{L}(\mu_l)$ and $\int_X T \mathrm{d} \mu_l \ge 0$.

$(iv)$:
Let $T$ and $(T_n)_{n \in \mathbb{N}}$ be as above.
Since $( \max\{ T_n, 0 \} )_{n \in \mathbb{N}}$ is uniformly $\mu_l$-integrable and $T \in L^1(\mu_l)$, $( \max\{ T_n - T, 0 \} )_{n \in \mathbb{N}}$ is uniformly 
$\mu_l$-integrable as well.
Hence,
\begin{equation}
\lim_{n \to \infty} \overline{p_l} \left( T_n - T \right)
=
\lim_{n \to \infty} \int_X \max \left\{ T_n - T, 0 \right\} \mathrm{d} \mu_l
=
0 \, .
\end{equation}
\end{proof}
As a consequence, $W_{k,l}$ is just as coercive as $V_{k,l}$ is.
\begin{corollary}
\label{cor:WklLowerBound}
$W_{k,l}$ is $L^1(\mu_l)$-norm coercive.
\end{corollary}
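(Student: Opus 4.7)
The plan is to transfer the $L^1(\mu_l)$-coercivity bound established for $V_{k,l}$ in the previous lemma to $W_{k,l}$, using parts (ii) and (iii) of \cref{lem:WProperties} to compensate for the fact that elements of $\mathrm{dom}\, W_{k,l}$ need not lie in $\mathcal{K}(\mu_l)$.

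First I would dispose of $T \in \mathcal{L}(\mu_l) \setminus \mathrm{dom}\, W_{k,l}$ trivially, since there $W_{k,l}(T) = \infty$. For $T \in \mathrm{dom}\, W_{k,l}$, part (ii) of the preceding lemma rewrites $W_{k,l}(T) = \ln \int_X \exp[T] \,\mathrm{d}\mu_k$. Next, because $I_k \le I_l$ in $\mathcal{I}(\mu_{-w})$, \cref{thm:InnerProductOrderDiagonal} gives $\tilde{I}_k \le \tilde{I}_l$ $\mu_{-w}$-almost everywhere, and translation by $w$ yields $Q_k \le Q_l$ $\mu$-almost everywhere. This makes the Radon--Nikodym derivative $\mathrm{d}\mu_k/\mathrm{d}\mu_l = (N_l/N_k)\exp[(Q_l - Q_k)/2]$ bounded below by $N_l/N_k$, so that
\begin{equation*}
W_{k,l}(T) \;\ge\; \ln \frac{N_l}{N_k} + \ln \int_X \exp[T] \,\mathrm{d}\mu_l.
\end{equation*}

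The role previously played by $\int T \,\mathrm{d}\mu_l = 0$ for $T \in \mathcal{K}(\mu_l)$ is now filled by \cref{lem:WProperties}(iii), which only supplies $\int_X T \,\mathrm{d}\mu_l \ge 0$, but crucially in the favourable direction. Combining this with the elementary pointwise bound $e^x - x \ge \max\{1, |x|\}$, I would chain
\begin{equation*}
\int_X \exp[T] \,\mathrm{d}\mu_l \;\ge\; \int_X \bigl( \exp[T] - T \bigr) \,\mathrm{d}\mu_l \;\ge\; \int_X \max\{1, |T|\} \,\mathrm{d}\mu_l \;\ge\; \max\bigl\{ 1, \Vert T \Vert_{L^1(\mu_l)} \bigr\},
\end{equation*}
and on taking logarithms and recombining with the previous display I obtain the desired coercive lower bound on $W_{k,l}(T)$.

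I do not anticipate a substantive obstacle here; the only thing to observe is that \cref{lem:WProperties}(iii) provides exactly the one-sided substitute for the zero-mean property used to bound $V_{k,l}$, and that the sign comes out in our favour, so that the same chain of estimates as in the $V_{k,l}$ case survives essentially verbatim.
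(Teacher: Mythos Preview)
Your proposal is correct and follows essentially the same route as the paper's proof: invoke \cref{lem:WProperties}(ii) to write $W_{k,l}(T)=\ln\int_X e^T\,\mathrm{d}\mu_k$, pass from $\mu_k$ to $\mu_l$ via $Q_k\le Q_l$ to pick up the $\ln(N_l/N_k)$ term, and then use $\int_X T\,\mathrm{d}\mu_l\ge 0$ from \cref{lem:WProperties}(iii) together with the pointwise bound $e^x-x\ge\max\{1,|x|\}$ to obtain the coercive lower bound. The only cosmetic difference is that the paper subtracts the constant $\int_X T\,\mathrm{d}\mu_l$ inside the integral whereas you subtract $T$ pointwise, but these are of course the same integral.
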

\begin{proof}
For all $T \in \mathrm{dom}\, W_{k,l}$, we have $\int_X T \mathrm{d} \mu_l \ge 0$ such that,
\begin{equation}
\begin{aligned}
W_{k,l} \left( T \right)
&=
\ln \int_X \exp \left[ T \right] \mathrm{d} \mu_k
\ge
\ln \frac{N_l}{N_k}
+
\ln \int_X \exp \left[ T \right] \mathrm{d} \mu_l \\
&\ge
\ln \frac{N_l}{N_k}
+
\ln \int_X \left( \exp \left[ T \right]- \int_X T \mathrm{d} \mu_l \right) \mathrm{d} \mu_l
\ge
\ln \frac{N_l}{N_k}
+
\ln \int_X \max\left\{ 1, \left| T \right| \right\} \mathrm{d} \mu_l \, .
\end{aligned}
\end{equation}
\end{proof}
As we shall see, $\mathcal{L}(\mu_l)$ does indeed exhibit the desired minima and we just need to translate from the domain of $V_k^*$, i.e. $\mathcal{A}(\mu)$, to the domain of $W_{k,l}^*$, i.e. $(\mathcal{L}(\mu_l), \overline{\tau_l})^*$.
\begin{lemma}
For every $y \in \mathcal{A}(\mu)$ and $k \ge 0$, the linear map $\mathcal{K}(\mu_k) \to \mathbb{R}$ given by
\begin{equation}
\left[ \phi - m_k \left( \phi \right) \right]_{\mu_k}
\mapsto
\left( y - m_k \right) \left( \phi  \right)
\end{equation}
for all $\phi \in X^*$ is well-defined and extends continuously to a unique element $M_k(y) \in \mathcal{M}(\mu_k)^*$.
\end{lemma}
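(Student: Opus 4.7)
My plan is to exploit the inclusion \cref{eq:AffineKernelInMeanShift}, which reduces everything to the identity $\mathcal{H}(\nu_k) = \mathcal{M}(\nu_k)^*$. Given $y \in \mathcal{A}(\mu)$, that inclusion provides some $h \in \mathcal{H}(\nu_k)$ with $y = m_k + h$ as elements of $\tilde{X}$; unpacking this via the canonical embedding $\mathcal{H}(\nu_k) \hookrightarrow \tilde{X}$ means $y(\phi) = m_k(\phi) + h([\phi]_{\nu_k})$ for every $\phi \in X^*$. Since $\nu_k$ has vanishing mean on all of $X^*$, the remark preceding \cref{thm:AffineKernelContainedInShiftedKernel} gives $\mathcal{H}(\nu_k) = \mathcal{M}(\nu_k)^*$, so $h$ is already a continuous linear functional on $\mathcal{M}(\nu_k)$.

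The heart of the argument is to transport $h$ from $\nu_k$ to $\mu_k$ using the Borel automorphism $T: \tilde{X} \to \tilde{X},\, x \mapsto x - m_k$, which satisfies $T_*(\widetilde{\mu_k}) = \nu_k$. Precomposition with $T$ yields an isomorphism of $L^0$-spaces for the topology of convergence in measure, and since $X$ has full $\widetilde{\mu_k}$-measure, this descends to an isomorphism $\Phi: L^0(\nu_k) \to L^0(\mu_k)$. By linearity of $\phi \in X^*$, one checks $\Phi([\phi]_{\nu_k}) = [\phi - m_k(\phi)]_{\mu_k}$, so $\Phi$ restricts to a bijective homeomorphism $\mathcal{K}(\nu_k) \to \mathcal{K}(\mu_k)$ and extends uniquely to an isomorphism $\mathcal{M}(\nu_k) \to \mathcal{M}(\mu_k)$ of completions.

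Setting $M_k(y) := h \circ \Phi^{-1}$ then produces an element of $\mathcal{M}(\mu_k)^*$ that on the generators satisfies
$$M_k(y)([\phi - m_k(\phi)]_{\mu_k}) = h([\phi]_{\nu_k}) = (y - m_k)(\phi).$$
Well-definedness of the original prescription on $\mathcal{K}(\mu_k)$ is then automatic, as $\Phi^{-1}$ is a map on equivalence classes, and uniqueness of the continuous extension to $\mathcal{M}(\mu_k)$ follows from the density of $\mathcal{K}(\mu_k)$ in $\mathcal{M}(\mu_k)$. The only point requiring real care will be the bookkeeping around the identification of $\mathcal{H}(\nu_k)$ as a subset of $\tilde{X}$ via $h \mapsto (\phi \mapsto h([\phi]_{\nu_k}))$ and the compatibility of the two viewpoints of $h$ — as a continuous functional on $\mathcal{M}(\nu_k)$ and as an element of $\tilde{X}$. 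Once this is pinned down, the remainder is essentially formal and I do not anticipate a substantial obstacle.
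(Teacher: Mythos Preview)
Your proposal is correct and follows essentially the same route as the paper: both hinge on $y - m_k \in \mathcal{H}(\nu_k)$ from \cref{eq:AffineKernelInMeanShift} together with the translation by $m_k$ that identifies $\mu_k$-classes $[\phi - m_k(\phi)]_{\mu_k}$ with $\nu_k$-classes $[\phi]_{\nu_k}$. The only cosmetic difference is that you package the translation as an explicit isomorphism $\Phi:\mathcal{M}(\nu_k)\to\mathcal{M}(\mu_k)$ and invoke $\mathcal{H}(\nu_k)=\mathcal{M}(\nu_k)^*$, whereas the paper checks well-definedness and continuity directly via a Cauchy-sequence argument using only the defining continuity of $y-m_k\in\mathcal{H}(\nu_k)$.
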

\begin{proof}
From \cref{eq:AffineKernelInMeanShift}, it is clear that $y - m_k \in \mathcal{H}(\nu_k)$.
Moreover, for any $\phi, \psi \in X^*$ with $[ \phi - m_k ( \phi ) ]_{\mu_k} = [ \psi - m_k ( \psi ) ]_{\mu_k}$, we have $\phi = \psi$ $\nu_k$-almost everywhere and thus $(y - m_k)( \phi ) = (y - m_k)( \psi )$.
Now, let $(T_n)_{n \in \mathbb{N}}$ be a sequence in $\mathcal{K}(\mu_k)$ that is Cauchy in $\mathcal{M}(\mu_k)$.
Then, every corresponding sequence $(\phi_n)_{n \in \mathbb{N}}$ in $X^*$ with $T_n = [\phi_n - m_k(\phi_n)]_{\mu_k}$ is clearly Cauchy with respect to the topology of convergence in $\nu_k$-measure.
Consequently, $(y - m_k)( \phi_n )$ is Cauchy and the claim follows.
\end{proof}
Since $\mathcal{M}(\mu_k)^* \subseteq (\mathcal{L}(\mu_l), \overline{\tau_l})^*$, it is easy to see that $W_{k,l}$ encodes precisely the right information.
\begin{corollary}
\label{cor:VkAstEqualsVklAstEqualsWklAst}
For all $y \in \mathcal{A}(\mu)$ and all $0 \le k \le l$, $V_k^*(y) = V_{k,l}^*(M_l(y)) = W_{k,l}^*(M_l(y))$.
\end{corollary}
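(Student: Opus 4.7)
The plan is to handle the two equalities in turn: the first is essentially a change-of-variables bookkeeping that rewrites the supremum over $\phi \in X^*$ as a supremum over $T \in \mathcal{K}(\mu_l)$, while the second uses the density property from \cref{lem:WProperties}(iv) together with the fact that $M_l(y)$ is continuous for convergence in $\mu_l$-measure.

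For $V_k^*(y) = V_{k,l}^*(M_l(y))$, I would parametrise $\mathcal{K}(\mu_l)$ by the surjection $\phi \mapsto T_\phi := [\phi - m_l(\phi)]_{\mu_l}$. Factoring out the constant $\exp[-m_l(\phi)]$ from the defining integral yields $V_{k,l}(T_\phi) = V_k(\phi) - m_l(\phi)$, and the very construction of $M_l(y)$ on $\mathcal{K}(\mu_l)$ gives $M_l(y)(T_\phi) = (y - m_l)(\phi) = y(\phi) - m_l(\phi)$. Subtracting, $M_l(y)(T_\phi) - V_{k,l}(T_\phi) = y(\phi) - V_k(\phi)$, so taking suprema on both sides delivers the claim; both sides of the pointwise identity depend only on the class $T_\phi$ and not on the chosen representative $\phi$, so the non-injectivity of $\phi \mapsto T_\phi$ causes no issue.

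For $V_{k,l}^*(M_l(y)) = W_{k,l}^*(M_l(y))$, the inequality $V_{k,l}^*(M_l(y)) \le W_{k,l}^*(M_l(y))$ is immediate from \cref{lem:WProperties}(i) (which gives $V_{k,l} = W_{k,l}$ on $\mathcal{K}(\mu_l)$) combined with $\mathcal{K}(\mu_l) \subseteq \mathcal{L}(\mu_l)$, since then the supremum defining $V_{k,l}^*(M_l(y))$ is taken over a subset on which the two functionals agree. For the reverse, I would fix $T \in \mathrm{dom}\, W_{k,l}$ and invoke \cref{lem:WProperties}(iv) to obtain a sequence $(T_n)_{n \in \mathbb{N}}$ in $\mathcal{K}(\mu_l)$ with $T_n \to T$ in $\overline{\tau_l}$ and $V_{k,l}(T_n) \to W_{k,l}(T)$. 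Because $\overline{\tau_l}$ is finer than the topology of convergence in $\mu_l$-measure and $M_l(y) \in \mathcal{M}(\mu_l)^*$ is continuous in the latter, we get $M_l(y)(T_n) \to M_l(y)(T)$. Passing to the limit in $V_{k,l}^*(M_l(y)) \ge M_l(y)(T_n) - V_{k,l}(T_n)$ yields $V_{k,l}^*(M_l(y)) \ge M_l(y)(T) - W_{k,l}(T)$, and taking supremum over $T \in \mathcal{L}(\mu_l)$ closes the argument.

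There is no serious obstacle; the corollary assembles pieces already prepared in the preceding lemmas. The only point that requires care is that $V_{k,l}^*$ and $W_{k,l}^*$ are a priori convex conjugates on different dual cones, and one has to justify evaluating both at the same object $M_l(y)$. This works because $M_l(y)$ extends continuously from $\mathcal{K}(\mu_l)$ to all of $\mathcal{M}(\mu_l)$, which by inclusion $\mathcal{M}(\mu_l)^* \subseteq (\mathcal{L}(\mu_l), \overline{\tau_l})^*$ restricts consistently to the correct functional in the latter dual cone.
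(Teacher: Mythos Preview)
Your proof is correct and follows essentially the same route as the paper. The first equality is handled identically by the change of variables $\phi \mapsto [\phi - m_l(\phi)]_{\mu_l}$, and for the second equality the paper phrases the same density-plus-continuity argument more tersely as ``$\mathrm{gr}\,[M_l(y) - V_{k,l}]$ is dense in $\mathrm{gr}\,[M_l(y) - W_{k,l}]$ in the subspace topology of $\mathcal{M}(\mu_l) \times \mathbb{R}$''; you invoke \cref{lem:WProperties}(iv) (density in the finer $\overline{\tau_l}$ topology) where the paper in effect uses only density in $\mu_l$-measure, but since $M_l(y) \in \mathcal{M}(\mu_l)^*$ this makes no difference.
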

\begin{proof}
Pick $z_l \in \mathcal{H}(\nu_l)$ such that $y = m_l + z_l$.
Then
\begin{equation}
\begin{aligned}
V_k^* \left( y \right)
&=
\sup_{\phi \in X^*} \left[ y \left( \phi \right) - \ln \int_X \exp \left[ \phi \right] \mathrm{d} \mu_k \right] \\
&=
\sup_{\phi \in X^*} \left[ M_l \left( y \right) \left( \left[ \phi - m_l \left( \phi \right) \right]_{\mu_l} \right) - \ln \int_X \exp \left[ \phi - m_l \left( \phi \right) \right] \mathrm{d} \mu_k \right] \\
&=
\sup_{T \in \mathcal{K}(\mu_l)} \left[ M_l \left( y \right) \left( T \right) - V_{k,l}( T ) \right]
=
V_{k,l}^* \left( M_l \left( y \right) \right) \, .
\end{aligned}
\end{equation}
That $V_{k,l}^*(M_l(y)) = W_{k,l}^*(M_l(y))$, follows since $\mathrm{gr}\, [ M_l(y) - V_{k,l} ]$ is dense in $\mathrm{gr}\, [ M_l(y) - W_{k,l} ]$ in the subspace topology of $\mathcal{M}(\mu_l) \times \mathbb{R}$.
\end{proof}
The underlying reason to use the $\tau_l$ topology and not just the $p_l$-topology boils down to the following \namecref{thm:L1CoercivityOnInterior} for which it is imperative, that one can, for certain $\beta \in (\mathcal{L}(\mu_l), \overline{\tau_l})^*$ and for every $T \in \mathcal{L}(\mu_l)$, approximate $\beta(T)$, by a sequence $\beta(S_n)$ with $S_n \in \mathcal{K}(\mu_l)$.
\begin{theorem}
\label{thm:L1CoercivityOnInterior}
Let $0 \le k \le l$ and $\beta \in \mathcal{M}(\mu_l)^*$.
Then the following are equivalent,
\begin{enumerate}
\item $\beta \in \mathrm{int}\, \mathrm{dom}\, V_{k,l}^*$
\item There is some $\epsilon > 0$ such that
\begin{equation}
\label{eq:pBarDualInteriorEstimate}
\sup \left\{ W_{k,l}^* \left( \beta + \alpha \right) \middle| \alpha \in \left( \mathcal{L}(\mu_l), \overline{p_l} \right)^* : \bar{p}_l^* \left( \alpha \right) < \epsilon \right\} < \infty \, .
\end{equation}
\item $W_{k,l} - \beta$ is $L^1(\mu_l)$-norm coercive.
\end{enumerate}
\end{theorem}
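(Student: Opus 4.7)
The proof proceeds by establishing the cycle (i) $\Leftrightarrow$ (iii) $\Leftrightarrow$ (ii). The central analytic observation is that, by Lemma~\ref{lem:WProperties}(iii), $\int T\,\mathrm{d}\mu_l \ge 0$ for every $T \in \mathrm{dom}\,W_{k,l}$, so that $p_l(T) \le \bar{p}_l(T)$ and hence $\|T\|_{L^1(\mu_l)} \le 2\bar{p}_l(T)$ on $\mathrm{dom}\,W_{k,l}$; this is the key ingredient that lets the one-sided $\bar{p}_l$-coercivity naturally arising from (ii) upgrade to full $L^1(\mu_l)$-coercivity. Throughout, I use the standard convex-analysis fact that, for a proper lsc convex function on a normed space, norm $\to \infty$ coercivity is equivalent to the existence of a linear lower bound $\eta\|\cdot\| - M$ with $\eta, M > 0$ (see e.g.\ \cite{src:Zalinescu:ConvexAnalysisInGeneralVectorSpaces}); combined with Fenchel--Young duality this recasts (i) as the existence of such $(\eta, M)$ with $V_{k,l}(T) - \beta(T) \ge \eta\|T\|_{L^1(\mu_l)} - M$ on $\mathcal{K}(\mu_l)$, and (iii) as the same inequality on $\mathcal{L}(\mu_l)$ for $W_{k,l}$.

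\textbf{(i) $\Leftrightarrow$ (iii).} Since $V_{k,l} = W_{k,l}$ on $\mathcal{K}(\mu_l)$ (Lemma~\ref{lem:WProperties}(i)), the direction (iii) $\Rightarrow$ (i) is immediate from the reduction. For the converse, given $T \in \mathrm{dom}\,W_{k,l}$, choose via Lemma~\ref{lem:WProperties}(iv) a sequence $T_n \in \mathcal{K}(\mu_l)$ converging to $T$ in $\overline{\tau_l}$ with $V_{k,l}(T_n) \to W_{k,l}(T)$. Continuity of $\beta \in \mathcal{M}(\mu_l)^*$ in the topology of convergence in measure yields $\beta(T_n) \to \beta(T)$, while Fatou's lemma applied to an almost-everywhere convergent subsequence gives $\|T\|_{L^1(\mu_l)} \le \liminf_n \|T_n\|_{L^1(\mu_l)}$. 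Passing to the limit in the linear bound for $V_{k,l} - \beta$ then reproduces it for $W_{k,l} - \beta$.

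\textbf{(iii) $\Leftrightarrow$ (ii).} For (iii) $\Rightarrow$ (ii), any $\alpha \in (\mathcal{L}(\mu_l), \overline{p_l})^*$ satisfies $\alpha(S) \le \bar{p}_l^*(\alpha)\bar{p}_l(S)$; applying this to $S = T$ and $S = -T$ yields $|\alpha(T)| \le \bar{p}_l^*(\alpha)\max\{\bar{p}_l(T), p_l(T)\} \le \bar{p}_l^*(\alpha)\|T\|_{L^1(\mu_l)}$. Hence, choosing $\bar{p}_l^*(\alpha) < \eta/2$ and using the linear bound from (iii) gives $W_{k,l}(T) - (\beta + \alpha)(T) \ge (\eta/2)\|T\|_{L^1(\mu_l)} - M \ge -M$ uniformly, so $W_{k,l}^*(\beta + \alpha) \le M$. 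Conversely, given (ii) with constants $\epsilon, M$, for each $T_0 \in \mathrm{dom}\,W_{k,l}$ with $\bar{p}_l(T_0) > 0$ the Hahn--Banach theorem for asymmetric norms (see \cite{src:Cobzaş:FunctionalAnalysisinAsymmetricNormedSpaces}) provides a norming functional $\alpha_0 \in (\mathcal{L}(\mu_l), \overline{p_l})^*$ with $\bar{p}_l^*(\alpha_0) = 1$ and $\alpha_0(T_0) = \bar{p}_l(T_0)$; feeding $s\alpha_0$ with $s \nearrow \epsilon$ into (ii) yields $W_{k,l}(T_0) - \beta(T_0) \ge \epsilon\bar{p}_l(T_0) - M$, and combining with $\bar{p}_l(T_0) \ge \|T_0\|_{L^1(\mu_l)}/2$ delivers (iii).

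\textbf{Main obstacle.} The only nontrivial input is the asymmetric Hahn--Banach step producing the norming functional $\alpha_0$: one has to verify that the extension guaranteed by \cite{src:Cobzaş:FunctionalAnalysisinAsymmetricNormedSpaces} actually lies in $(\mathcal{L}(\mu_l), \overline{p_l})^*$ with the claimed cone-dual norm. Once this ingredient is in hand, the rest of the argument reduces to bookkeeping with the dense-subgraph property of Lemma~\ref{lem:WProperties}(iv) and the asymmetric inequalities relating $p_l$, $\bar{p}_l$, and $\|\cdot\|_{L^1(\mu_l)}$ on $\mathrm{dom}\,W_{k,l}$.
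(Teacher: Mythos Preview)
Your argument is correct and rests on the same three ingredients the paper uses: the dense-subgraph property of Lemma~\ref{lem:WProperties}(iv), the asymmetric Hahn--Banach duality $\sup_{\bar p_l^*(\alpha)\le 1}\alpha(T)=\bar p_l(T)$, and the inequality $\bar p_l(T)\ge\tfrac12\|T\|_{L^1(\mu_l)}$ on $\mathrm{dom}\,W_{k,l}$. The organisation, however, is genuinely different. The paper proves the cycle $(i)\Rightarrow(ii)\Rightarrow(iii)\Rightarrow(i)$, and its step $(i)\Rightarrow(ii)$ is the most technical: it takes a maximising sequence $(T_n)$ for $\beta+\alpha-W_{k,l}$ in $\mathcal{L}(\mu_l)$, approximates each $T_n$ by elements $S^n_m\in\mathcal{K}(\mu_l)$ with $V_{k,l}(S^n_m)\to W_{k,l}(T_n)$ and $\liminf_m\|S^n_m\|\ge\|T_n\|$, and extracts a diagonal sequence to reduce to $\sup_{\gamma\in\overline{B^*_\epsilon(0)}}V_{k,l}^*(\beta+\gamma)$. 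You bypass this entirely by going $(i)\Rightarrow(iii)$ directly: the same approximation from Lemma~\ref{lem:WProperties}(iv) transfers the linear lower bound from $V_{k,l}-\beta$ on $\mathcal{K}(\mu_l)$ to $W_{k,l}-\beta$ on $\mathcal{L}(\mu_l)$ in one step, after which your $(iii)\Rightarrow(ii)$ is a one-line estimate using $\alpha(T)\le\bar p_l^*(\alpha)\|T\|_{L^1(\mu_l)}$. The upshot is that your route is shorter and avoids the diagonal bookkeeping; the paper's route has the minor advantage of making the quantitative relation between the radii in $(i)$ and $(ii)$ explicit via \cite[Exercise~2.45(a)]{src:Zalinescu:ConvexAnalysisInGeneralVectorSpaces}, but that is not needed elsewhere.

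Two small remarks. In $(iii)\Rightarrow(ii)$ you only need the one-sided bound $\alpha(T)\le\bar p_l^*(\alpha)\bar p_l(T)\le\bar p_l^*(\alpha)\|T\|_{L^1(\mu_l)}$; the two-sided estimate with $\max\{\bar p_l(T),p_l(T)\}$ is correct but unnecessary. In $(ii)\Rightarrow(iii)$ the edge case $\bar p_l(T_0)=0$ forces $T_0=0$ $\mu_l$-a.e.\ (since $T_0\in\mathrm{dom}\,W_{k,l}$ gives $\int T_0\,\mathrm{d}\mu_l\ge0$), and the bound then follows from $W^*_{k,l}(\beta)\le M$ evaluated at $T=0$; you should mention this explicitly.
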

\begin{proof}
$(i) \implies (ii)$:
Let $\Vert \cdot \Vert_l^*$ denote the dual norm on $\mathcal{K}(\mu_l)^*$ and define $\overline{B_\epsilon^*( 0 )}$ to be the closed $\Vert \cdot \Vert_l^*$-ball of radius $\epsilon > 0$ around zero.
Then, by \cite[Exercise 2.45(a)]{src:Zalinescu:ConvexAnalysisInGeneralVectorSpaces}, there is some $\epsilon > 0$ such that
\begin{equation}
\sup_{\alpha \in \overline{B_\epsilon^*( 0 )}} V_{k,l}^* \left( \beta + \alpha \right) < \infty \, .
\end{equation}
Now, let $\alpha \in ( \mathcal{L}(\mu_l), \overline{p_l} )^*$ with $\bar{p}_l^* ( \alpha ) < \epsilon$ and let $(T_n)_{n \in \mathbb{N}}$ be any sequence in $\mathcal{L}(\mu_l)$ that maximises $\beta + \alpha - W_{k,l}$.
Clearly,
\begin{equation}
\beta \left( T_n \right) + \alpha \left( T_n \right) - W_l \left( T_n \right)
\le
\beta \left( T_n \right) + \overline{p_l}^* \left( \alpha \right) \left\Vert T_n \right\Vert_{L^1(\mu_l)} - W_l \left( T_n \right) \, .
\end{equation}
For each $n \in \mathbb{N}$, let $(S^n_m)_{m \in \mathbb{N}}$ be a sequence in $\mathcal{K}(\mu_l)$ such that
\begin{equation}
S^n_m \text{ converges to } T_n \text{ in $\mu_l$-measure and } \lim_{m \to \infty} V_{k,l} \left( S^n_m \right) = W_{k,l} \left( T_n \right) \, .
\end{equation}
Then, by Fatou's lemma,
\begin{equation}
\liminf_{m \to \infty} \left\Vert S^n_m \right\Vert_{L^1(\mu_l)}
\ge
\left\Vert T_n \right\Vert_{L^1(\mu_l)} \, .
\end{equation}
Consequently, we find some diagonal sequence $(S_n)_{n \in \mathbb{N}}$ in $\mathcal{K}(\mu_l)$ such that for each $n \in \mathbb{N}$,
\begin{equation}
\beta \left( T_n \right) \le \beta \left( S_n \right) + \frac{1}{n} \, , \quad
W_{k,l} \left( T_n \right) \ge W_{k,l} \left( S_n \right) - \frac{1}{n} \, , \quad
\left\Vert T_n \right\Vert_{L^1(\mu_l)} \le \left\Vert S_n \right\Vert_{L^1(\mu_l)} + \frac{1}{n} \, .
\end{equation}
Hence,
\begin{equation}
\begin{aligned}
\beta \left( T_n \right) + \alpha \left( T_n \right) - W_l \left( T_n \right)
&\le
\beta \left( S_n \right) + \overline{p_l}^* \left( \alpha \right) \left\Vert S_n \right\Vert_{L^1(\mu_l)} - V_{k,l} \left( S_n \right) + \frac{3}{n} \\
&\le
\sup_{S \in \mathcal{K}(\mu_l)} \left[ \beta  \left( S \right) + \epsilon \left\Vert S \right\Vert_{L^1(\mu_l)} - V_{k,l} \left( S \right) \right] + \frac{3}{n} \\
&=
\sup_{S \in \mathcal{K}(\mu_l)} \sup_{\gamma \in \overline{B_\epsilon^*( 0 )}} \left[ \beta \left( S \right) + \gamma \left( S \right) - V_{k,l} \left( S \right) \right] + \frac{3}{n} \\
&=
\sup_{\gamma \in \overline{B_\epsilon^*( 0 )}} V_{k,l}^* \left( \beta + \gamma \right) + \frac{3}{n} \, .
\end{aligned}
\end{equation}
Since this bound is independent of $\alpha$, \cref{eq:pBarDualInteriorEstimate} holds.

$(ii) \implies (iii)$:
It is well-known that $\sup_{\overline{p_l}^*(\alpha) \le 1} \alpha(T) = \overline{p_l}(T)$ for all $T \in \mathcal{L}(\mu_l)$ (see e.g. \cite[Corollary 2.2.4]{src:Cobzaş:FunctionalAnalysisinAsymmetricNormedSpaces} and note that the case $\overline{p_l}(T) = 0$ is trivial).
Furthermore, recalling that $T \in \mathrm{dom}\, W_{k,l}$ implies $\int_X T \mathrm{d} \mu_l \ge 0$, we also have $\overline{p_l}(T) \ge \Vert T \Vert_{L^1(\mu_l)} / 2$.
Hence,
\begin{equation}
\begin{aligned}
\sup &\left\{ W_{k,l}^* \left( \beta + \alpha \right) \middle| \alpha \in \left( \mathcal{L}(\mu_l), \overline{p_l} \right)^* : \bar{p}_l^* \left( \alpha \right) < \epsilon \right\}
=
\sup_{T \in \mathcal{L}(\mu_l)} \left[ \beta \left( T \right) + \epsilon \overline{p_l} \left( T \right) - W_{k,l} \left( T \right) \right] \\
&\ge
\sup_{T \in \mathcal{L}(\mu_l)} \left[ \beta \left( T \right) + \frac{\epsilon}{2} \left\Vert T \right\Vert_{L^1(\mu_l)} - W_{k,l} \left( T \right) \right] \, .
\end{aligned}
\end{equation}
Since the above is finite, we have that $W_{k,l} - \beta$ is $L^1(\mu_l)$-norm coercive.

$(iii) \implies (i)$:
Clearly, $V_{k,l} - M_l(y)$ is also $L^1(\mu_l)$-norm coercive.
Hence, the claim follows from \cite[Exercise 2.41]{src:Zalinescu:ConvexAnalysisInGeneralVectorSpaces}.
\end{proof}
Finally, the minimisation properties follow, inspired by \cite[Theorem 5.3]{src:FischerZiebell:Tychonov}.
\begin{lemma}
\label{lem:WklMinimisation}
Let $0 \le k \le l$ and $\beta \in \mathcal{M}(\mu_l)^*$ such that $\beta \in \mathrm{int}\, \mathrm{dom}\, V_{k,l}^*$.
Then the set of minimisers $M$ of $W_{k,l} - \beta$ is nonempty and has the form $M = \{ S + c : c \in I \}$ for some $S \in \mathcal{L}(\mu_l)$ and a compact interval $I \subset \mathbb{R}$.
Moreover, for every minimising sequence $(T_n)_{n \in \mathbb{N}}$, there is some $S \in M$ and a subsequence $(R_n)_{n \in \mathbb{N}}$ such that for every further subsequence $(R'_n)_{n \in \mathbb{N}}$,
\begin{equation}
A_n = \frac{1}{n} \sum_{m = 1}^n R'_m
\end{equation}
$\overline{\tau_l}$-converges to $S$.
\end{lemma}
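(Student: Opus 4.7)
The plan is to combine a Komlós-type diagonal selection with strict convexity of $W_{k,l}$ modulo additive constants. Starting from an arbitrary minimising sequence $(T_n)_{n \in \mathbb{N}}$ for $W_{k,l} - \beta$, \cref{thm:L1CoercivityOnInterior} gives that it is $L^1(\mu_l)$-norm bounded, so Komlós' theorem supplies a subsequence $(R_n)_{n \in \mathbb{N}}$ and an $S \in L^1(\mu_l)$ with the property that for every further subsequence $(R'_n)_{n \in \mathbb{N}}$, the Cesàro averages $A_n = \frac{1}{n} \sum_{m=1}^n R'_m$ converge to $S$ $\mu_l$-almost everywhere. Because $\mathcal{M}(\mu_l)$ is closed under convergence in $\mu_l$-measure and a.e.\ convergence implies convergence in measure on a finite measure space, $A_n \in \mathcal{L}(\mu_l)$, $S \in \mathcal{L}(\mu_l)$, and $A_n \to S$ in $\mathcal{M}(\mu_l)$. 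Convexity of $W_{k,l}$ together with linearity of $\beta$ makes $(A_n)_{n \in \mathbb{N}}$ itself a minimising sequence; combining the $\mu_l$-measure lower semicontinuity of $W_{k,l}$ from \cref{lem:WProperties} with the continuity of $\beta \in \mathcal{M}(\mu_l)^*$ then forces $S \in M$.

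To obtain the algebraic structure $M = \{S + c : c \in I\}$, I would invoke strict convexity of $T \mapsto \ln \int_X e^T \mathrm{d} \mu_k$ modulo constants: Cauchy--Schwarz yields $W_{k,l}((S_1 + S_2)/2) \le \frac{1}{2}(W_{k,l}(S_1) + W_{k,l}(S_2))$ with equality if and only if $e^{S_1} \propto e^{S_2}$ $\mu_k$-almost everywhere, i.e.\ $S_1 - S_2$ is $\mu_l$-a.e.\ constant. Hence any two minimisers differ by a constant and $M = \{S + c : c \in I\}$ for some convex $I \subseteq \mathbb{R}$. The $L^1(\mu_l)$-coercivity of $W_{k,l} - \beta$ prohibits $|c| \to \infty$ along $I$, while the lower semicontinuity of $W_{k,l}$ together with the continuity of $\beta$ makes $M$ closed in $\mathcal{M}(\mu_l)$; hence $I$ is a compact interval.

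The remaining task is to upgrade the a.e.\ convergence $A_n \to S$ to $\overline{\tau_l}$-convergence by showing $\overline{p_l}(A_n - S) = \int_X \max\{A_n - S, 0\} \mathrm{d} \mu_l \to 0$. Since $\mathrm{d} \mu_l / \mathrm{d} \mu_k = (N_k / N_l) \exp[(Q_k - Q_l)/2] \le N_k / N_l$ (because $Q_k \le Q_l$ by \cref{thm:InnerProductOrderDiagonal}) and $W_{k,l}(A_n) = \ln \int_X e^{A_n} \mathrm{d} \mu_k$ is bounded along the minimising sequence (as $\beta(A_n) \to \beta(S)$ by continuity), $\int_X e^{A_n} \mathrm{d} \mu_l$ is uniformly bounded. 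The de la Vallée-Poussin criterion applied to $x \mapsto e^x$ then yields uniform $\mu_l$-integrability of $(\max\{A_n, 0\})_{n \in \mathbb{N}}$; domination by this family together with $\max\{-S, 0\} \in L^1(\mu_l)$ transfers uniform integrability to $(\max\{A_n - S, 0\})_{n \in \mathbb{N}}$, and Vitali's theorem combined with convergence in $\mu_l$-measure closes the estimate.

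The main technical obstacle I anticipate is precisely this last uniform-integrability step: Komlós delivers only almost-everywhere convergence, so the positive-part $L^1$ convergence has to be extracted separately, and the crucial lever is the pointwise domination $\mathrm{d} \mu_l \le (N_k / N_l) \mathrm{d} \mu_k$ coming from the monotonicity $Q_k \le Q_l$. The other steps---subsequence-uniform existence of $S$, strict convexity modulo constants, and the compact-interval structure of $I$---are by comparison routine once the coercivity of \cref{thm:L1CoercivityOnInterior} is in hand.
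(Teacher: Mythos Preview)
Your proposal is correct and follows essentially the same route as the paper: Koml\'os on an $L^1(\mu_l)$-bounded minimising sequence, convexity to make the Ces\`aro means minimising, uniform $\mu_l$-integrability of $\max\{A_n,0\}$ via the bounded density $\mathrm{d}\mu_l/\mathrm{d}\mu_k \le N_k/N_l$ to upgrade to $\overline{\tau_l}$-convergence, and strict convexity of the log-moment functional modulo constants to pin down $M$. The only cosmetic difference is that you obtain the ``minimisers differ by a constant'' step from equality in Cauchy--Schwarz at the midpoint, whereas the paper computes the second derivative $f''(t)$ along the segment and reads off the same equality case; these are two phrasings of the same strict-convexity fact.
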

\begin{proof}
Letting $(T_n)_{n \in \mathbb{N}}$ be a minimising sequence, \cref{thm:L1CoercivityOnInterior} shows that $T_n$ is $L^1(\mu_l)$-bounded.
Consequently, by Komlòs' theorem, there is some $S \in L^1(\mu_l)$ and a subsequence $(R_n)_{n \in \mathbb{N}}$ such that for every further subsequence $(R'_n)_{n \in \mathbb{N}}$,
\begin{equation}
A_n = \frac{1}{n} \sum_{m = 1}^n R'_m
\end{equation}
converges in $\mu_l$-measure to $S$.
By construction, it is clear that $S \in \mathcal{L}(\mu_l)$ and by convexity, $(A_n)_{n \in \mathbb{N}}$ is a minimising sequence of $W_{k,l} - \beta$.
Without loss of generality, $A_n \in \mathrm{dom}\, W_{k,l}$ such that $\max\{ A_n, 0 \}$ is uniformly $\mu_l$-integrable and consequently, $\max\{ A_n - S, 0 \}$ is uniformly $\mu_l$-integrable as well.
Hence, by Vitali's convergence theorem,
\begin{equation}
\lim_{n \to \infty} \overline{p_l} \left( A_n - S \right)
=
\lim_{n \to \infty} \int_X \max\left\{ A_n - S, 0 \right\} \mathrm{d} \mu_l
=
0 \, ,
\end{equation}
i.e. $A_n$ $\overline{\tau_l}$-converges to $S$.
Because $\lim_{n \to \infty} \beta( A_n ) = \beta(S)$ it follows from the lower semicontinuity of $W_{k,l}$ with respect to convergence in measure, that $S$ is a minimiser of $W_{k,l} - \beta$, i.e. $S \in M$.
Now, let $S' \in M$ and note that the function
\begin{equation}
f : [0,1] \to \mathbb{R}, t \mapsto W_{k,l} \left( \left[ 1 - t \right] S + t S' \right)
=
\ln \int_X \exp \left[ \left( 1 - t \right) S + t S' \right] \mathrm{d} \mu_k \, .
\end{equation}
is twice differentiable on the open interval $(0,1)$ such that for some $t \in (0,1)$,
\begin{equation}
W_{k,l} \left( S' \right) - \beta \left( S' \right)
=
W_{k,l} \left( S \right) - \beta \left( S \right)
+
\frac{1}{2} f'' \left( t \right) \, ,
\end{equation}
i.e. $f'' ( t ) = 0$.
Writing $U = (1 - t) S + t S'$ for brevity, we obtain
\begin{equation}
f'' \left( t \right)
=
\frac{\int_X \left( S' - S \right)^2 \exp \left[ U \right] \mathrm{d} \mu_k}{\int_X \exp \left[ U \right] \mathrm{d} \mu_k}
- \left( \frac{\int_X \left( S' - S \right) \exp \left[ U \right] \mathrm{d} \mu_k}{\int_X \exp \left[ U \right] \mathrm{d} \mu_k} \right)^2
=
0 \, .
\end{equation}
Since this amounts to Hölder's inequality with the function $S' - S$ and the constant function $1$ becoming an equality, we conclude that that $S'$ and $S$ differ by a constant $\mu_k$-almost everywhere and thus also $\mu_l$-almost everywhere.
Hence, by the convexity of $W_{k,l} - \beta$, there exists an interval $I \subseteq \mathbb{R}$ such that
\begin{equation}
M = \left\{ S + c : c \in I \right\}
\end{equation}
and since $W_{k,l} - \beta$ is $L^1(\mu)$-coercive, $I$ is bounded.
Moreover, by the lower semicontinuity of $W_{k,l} - \beta$ with respect to convergence in measure, it is immediate that $\inf I \in I$ and $\sup I \in I$.
\end{proof}
\begin{corollary}
\label{cor:WklMinimisation}
Let $0 \le k \le l$ and $\beta \in \mathcal{M}(\mu_l)^*$ such that such that $\beta \in \mathrm{int}\, \mathrm{dom}\, V_{k,l}^*$.
Then, there is a $T \in \mathcal{L}(\mu_l)$ minimising $W_{k,l} - \beta$ for which every minimising sequence of $W_{k,l} - \beta$ converges $\bar{p}_l$-weakly to $T$.
\end{corollary}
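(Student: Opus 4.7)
The plan is to pick a distinguished \enquote{maximal} minimiser $T$ from the minimiser set produced by \cref{lem:WklMinimisation}, and then show by contradiction that every minimising sequence converges $\overline{p_l}$-weakly to this $T$. The structure of the minimiser set together with the Ces\`aro-averaging statement in \cref{lem:WklMinimisation} will do most of the work.

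First I would invoke \cref{lem:WklMinimisation} to write the minimiser set as $M = \{S + c : c \in I\}$ for some $S \in \mathcal{L}(\mu_l)$ and a compact interval $I \subset \mathbb{R}$, and then set $T := S + \sup I$. The reason for the choice $\sup I$ is that $T$ should maximise every $\phi \in (\mathcal{L}(\mu_l), \overline{p_l})^*$ over $M$. The only nontrivial case is $I \ne \{0\}$; there the constants lie in $\mathcal{L}(\mu_l)$, and the upper semicontinuity of $\phi$ at the origin, combined with the observation $\overline{p_l}(-t) = \int_X \max\{-t, 0\} \,\mathrm{d}\mu_l = 0$ for every $t > 0$, forces $\phi(1) \ge 0$. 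Linearity then yields $\phi(S + c) = \phi(S) + c \phi(1) \le \phi(T)$ for every $c \in I$.

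For the main claim I would proceed by contradiction. If some minimising sequence $(T_n)$ fails to converge $\overline{p_l}$-weakly to $T$, there must exist $\phi \in (\mathcal{L}(\mu_l), \overline{p_l})^*$, some $\epsilon > 0$, and a subsequence $(y_n)$ of $(T_n)$ with $\phi(y_n) \ge \phi(T) + \epsilon$ for every $n$. Since $(y_n)$ is itself a minimising sequence, applying \cref{lem:WklMinimisation} to it produces a further subsequence $(z_n)$ whose Ces\`aro averages $A_N := N^{-1} \sum_{m=1}^N z_m$ converge in $\overline{\tau_l}$ to some $S' \in M$. As $\overline{\tau_l}$ refines $\overline{p_l}$, the upper semicontinuity of $\phi$ together with the maximality of $T$ gives $\limsup_N \phi(A_N) \le \phi(S') \le \phi(T)$. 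On the other hand, linearity forces $\phi(A_N) = N^{-1} \sum_{m=1}^N \phi(z_m) \ge \phi(T) + \epsilon$ for every $N$, which contradicts the previous inequality.

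The only step I anticipate as subtle is the maximality claim for $T$, which extracts the sign condition $\phi(1) \ge 0$ from a purely upper semicontinuity assumption. Once this is in hand, the contradiction is essentially bookkeeping, as \cref{lem:WklMinimisation} already carries out the analytic work of producing a Ces\`aro-convergent sub-subsequence.
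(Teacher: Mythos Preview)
Your proposal is correct and follows essentially the same strategy as the paper's proof: choose $T$ as the maximal element of the minimiser set from \cref{lem:WklMinimisation}, then use the Ces\`aro-averaging conclusion of that lemma on subsequences to force $\overline{p_l}$-weak convergence. The only cosmetic difference is in how maximality of $T$ is exploited: the paper observes directly that $T \ge S'$ pointwise implies $\overline{p_l}(A_N - T) \le \overline{p_l}(A_N - S') \to 0$, whereas you reach the same conclusion by first extracting $\phi(1) \ge 0$ from upper semicontinuity; the paper then packages the subsequence bookkeeping via \cref{lem:WeakConvergence}, which you simply inline as a contradiction argument.
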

\begin{proof}
Let $(T_n)_{n \in \mathbb{N}}$ be a minimising sequence.
Using \cref{lem:WklMinimisation}, there is a unique greatest element $T$ in the set of minimisers $M$ of $W_{k,l} - \beta$.
Furthermore, for every subsequence $(T_n)_{n \in \mathbb{N}}$ there is a $S \in M$ and a subsequence $(R_n)_{n \in \mathbb{N}}$ of $(T_n)_{n \in \mathbb{N}}$ such that for every further subsequence $(R'_n)_{n \in \mathbb{N}}$,
\begin{equation}
A_n = \frac{1}{n} \sum_{m = 1}^n R'_m
\end{equation}
$\overline{\tau_l}$-converges to $S$.
Since $T \ge S$, we clearly have that $A_n$ $\overline{p_l}$-converges to $T$ as well.
Consequently, for every $\alpha \in (\mathcal{L}(\mu_l), \overline{p_l})^*$, there is some $C \ge 0$ such that
\begin{equation}
\limsup_{n \to \infty} \alpha \left( A_n - T \right)
\le
C \limsup_{n \to \infty} \overline{p_l} \left( A_n - T \right)
=
0 \, ,
\end{equation}
i.e. $\limsup_{n \to \infty} \alpha ( A_n ) \le \alpha( T )$.
Hence, \cref{lem:WeakConvergence} applies and $R_n$ converges $\overline{p_l}$-weakly to $T$.
Consequently, every subsequence of $(T_n)_{n \in \mathbb{N}}$ has a subsequence converging $\overline{p_l}$-weakly to $T$ such that, indeed, $T_n$ converges $\overline{p_l}$-weakly to $T$.
\end{proof}
Applying \cref{thm:DualDifferentiability}, we also obtain the following.
\begin{corollary}
\label{cor:ConjugateDiffability}
Let $0 \le k \le l$ and $\beta \in \mathcal{M}(\mu_l)^*$ be such that $\beta \in \mathrm{int}\, \mathrm{dom}\, V_{k,l}^*$ and $T_k \in \mathcal{L}(\mu_l)$ satisfies the conclusion of \cref{cor:WklMinimisation}.
Then, for every $\alpha \in (\mathcal{L}(\mu_l), \overline{p_l})^*$,
\begin{equation}
\lim_{ t  \searrow 0 } \frac{1}{t} \left[ W_{k,l}^* \left( \beta + t \alpha \right) - W_{k,l}^* \left( \beta \right) \right]
=
\alpha \left( T_k \right) \, .
\end{equation}
\end{corollary}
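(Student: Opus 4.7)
The plan is to recognise the statement as a direct application of \cref{thm:DualDifferentiability} in the asymmetrically normed space $(\mathcal{L}(\mu_l), p_l)$, with $f = W_{k,l}$, $\phi = \beta$, and $x = T_k$, so that the right Gâteaux derivative of $f^* = W_{k,l}^*$ at $\beta$ is precisely the functional on the cone $(\mathcal{L}(\mu_l), \overline{p_l})^*$ induced by evaluation at $T_k$.

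First, I would verify the structural hypotheses. The function $W_{k,l}$ is proper: its domain contains $\mathcal{K}(\mu_l)$ by \cref{lem:WProperties}(i), and it is bounded below by \cref{cor:WklLowerBound}. It is convex because on $\mathrm{dom}\, W_{k,l}$ it admits the integral representation $T \mapsto \ln \int_X \exp[T]\,\mathrm{d}\mu_k$ from \cref{lem:WProperties}(ii), and outside the domain it equals $\infty$. The hypothesis $\beta \in \mathcal{M}(\mu_l)^*$, combined with the fact that $\overline{\tau_l}$ is finer than the topology of convergence in $\mu_l$-measure on $\mathcal{L}(\mu_l)$, ensures that $\beta$ is also an element of $(\mathcal{L}(\mu_l), \overline{p_l})^*$, which is the natural cone on which $W_{k,l}^*$ is defined.

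Second, I would observe that \cref{cor:WklMinimisation} is exactly condition (ii) of \cref{thm:DualDifferentiability} applied to $g = W_{k,l} - \beta$ at the point $T_k$: the corollary produces a minimiser to which every minimising sequence converges $\overline{p_l}$-weakly. Applying the equivalence (ii) $\implies$ (i) of \cref{thm:DualDifferentiability} then immediately yields that $W_{k,l}^*$ is right Gâteaux differentiable at $\beta$ with $D W_{k,l}^*(\beta) = T_k$, where $T_k$ is identified with the functional $\alpha \mapsto \alpha(T_k)$ on $(\mathcal{L}(\mu_l), \overline{p_l})^*$ as in the preliminaries. Unwinding the definition of right Gâteaux derivative gives the displayed limit.

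The only real obstacle is the bookkeeping of checking that all conventions line up: that $W_{k,l}$ is indeed proper and convex, that $\beta$ sits in the correct dual cone, and that the $\overline{p_l}$-weak convergence supplied by \cref{cor:WklMinimisation} matches the $\overline{p}$-weak convergence appearing in (ii) of \cref{thm:DualDifferentiability}. Once these identifications are made, there is no residual analytic content to prove, since both the existence of the minimiser and the required weak convergence were already established in the preceding results.
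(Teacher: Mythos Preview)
Your approach is exactly the paper's: the corollary is introduced with the single sentence ``Applying \cref{thm:DualDifferentiability}, we also obtain the following,'' and your identification of $f = W_{k,l}$, $x = T_k$, $\phi = \beta$ in $(\mathcal{L}(\mu_l), p_l)$, together with the observation that \cref{cor:WklMinimisation} furnishes precisely condition (ii), is what is intended.

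One bookkeeping correction: your justification that $\beta \in (\mathcal{L}(\mu_l), \overline{p_l})^*$ does not go through. The fact that $\overline{\tau_l}$ refines convergence in measure only yields $\beta \in (\mathcal{L}(\mu_l), \overline{\tau_l})^*$ (as the paper itself notes before \cref{cor:VkAstEqualsVklAstEqualsWklAst}); the $\overline{p_l}$-topology is strictly coarser than $\overline{\tau_l}$ and incomparable with convergence in measure, and the appendix lemma shows that $(\mathcal{L}(\mu_l), \overline{p_l})^*$ consists only of integrations against nonnegative elements of $L^\infty(\mu_l)$, which a general $\beta \in \mathcal{M}(\mu_l)^*$ need not be. This does not damage the argument, since $W_{k,l}^*(\gamma) = \sup_T[\gamma(T) - W_{k,l}(T)]$ is well-defined for any linear functional $\gamma$, the directions $\alpha$ in the displayed limit do lie in the cone $(\mathcal{L}(\mu_l), \overline{p_l})^*$, and the verification of condition (ii) in \cref{thm:DualDifferentiability} uses only the linearity of $\beta$ together with \cref{cor:WklMinimisation}; but you should not claim membership in the smaller cone.
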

Hence, $W_{k,l}^*$ is differentiable at such points $\beta$ in the given sense.
It is however, useful to show that is, in fact, also subdifferentiable in a very natural sense.
To that end we shall consider the function $W_{k,l}'$ on the Banach space $(\mathcal{L}(\mu_l), L^1(\mu_l))$ and note that $(W_{k,l}')^*$ coincides with $W_{k,l}^*$ on the intersection of their domains.
The following \namecref{lem:WPrimeSubdifferentialRepresentation} shows that subdifferentials of $W_{k,l}'$ are comparatively easy to understand.
\begin{lemma}
\label{lem:WPrimeSubdifferentialRepresentation}
Let $T \in \mathrm{dom}\, \partial W'_{k,l}$.
For every $\alpha \in \partial W'_{k,l}(T)$ and all $S \in \mathcal{K}(\mu_l)$,
\begin{equation}
\alpha \left( S \right)
=
\frac{\int_X S \exp \left[ T \right] \mathrm{d} \mu_k}{\int_X \exp \left[ T \right] \mathrm{d} \mu_k}
\qquad\text{and}\qquad
\alpha \left( T \right)
\ge
\frac{\int_X T \exp \left[ T \right] \mathrm{d} \mu_k}{\int_X \exp \left[ T \right] \mathrm{d} \mu_k} \, .
\end{equation}
\end{lemma}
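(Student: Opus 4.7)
The plan is to exploit the subdifferential inequality $W_{k,l}(R) \geq W_{k,l}(T) + \alpha(R - T)$ with carefully chosen test points $R \in \mathrm{dom}\, W_{k,l}$, and to evaluate directional derivatives of log-Laplace transforms via monotone convergence. By \cref{lem:WProperties}(ii), $W_{k,l}(T) = \ln \int_X e^T \mathrm{d} \mu_k$, so in particular $e^T \in L^1(\mu_k)$.

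First I would establish the inequality $\alpha(T) \ge \int_X T e^T \mathrm{d} \mu_k / \int_X e^T \mathrm{d} \mu_k$ by testing against $R_\lambda = (1 - \lambda) T$ for $\lambda \in (0, 1)$, which lies in $\mathrm{dom}\, W_{k,l}$ by convexity as a combination of $T$ and $0$. Setting $G(\lambda) = \int_X e^T e^{-\lambda T} \mathrm{d} \mu_k$, the subdifferential inequality reduces to $[\ln G(\lambda) - \ln G(0)]/\lambda \geq -\alpha(T)$. Since $\lambda \mapsto (e^{-\lambda T(x)} - 1)/\lambda$ is monotonically increasing in $\lambda > 0$ with pointwise limit $-T(x)$ as $\lambda \downarrow 0$, and $e^T (e^{-\lambda T} - 1)/\lambda$ is bounded above at $\lambda = 1$ by $1 - e^T \in L^1(\mu_k)$, monotone convergence gives $G'(0^+) = -\int_X T e^T \mathrm{d} \mu_k$ and thus the desired bound on $\alpha(T)$. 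The pointwise estimate $t_- e^t \leq 1/e$ ensures $\int_X T_- e^T \mathrm{d} \mu_k$ is automatically finite, and finiteness of $\alpha(T)$ then forces $\int_X T_+ e^T \mathrm{d} \mu_k$ to be finite a posteriori.

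The central obstacle for the equality $\alpha(S) = \int_X S e^T \mathrm{d} \mu_k / \int_X e^T \mathrm{d} \mu_k$ with $S \in \mathcal{K}(\mu_l)$ is that $e^{T + \epsilon S}$ need not be $\mu_k$-integrable for any $\epsilon \neq 0$, so one cannot directly perturb $T$ along $S$. I would circumvent this by introducing an auxiliary parameter $\mu \in (0, 1)$ and testing against $R_{\lambda, \mu} = (1 - \lambda) T + \lambda (S + \mu T)$. Hölder's inequality with exponents $1/(1 - \mu)$ and $1/\mu$ bounds
\begin{equation*}
\int_X e^{S + \mu T} \mathrm{d} \mu_k \leq \left( \int_X e^{S/(1-\mu)} \mathrm{d} \mu_k \right)^{1 - \mu} \left( \int_X e^T \mathrm{d} \mu_k \right)^\mu < \infty,
\end{equation*}
using the standing assumption that $e^{c\phi} \in L^1(\mu)$ for every $\phi \in X^*$ and $c \in \mathbb{R}$. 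An approximation argument analogous to the proof of \cref{lem:WProperties}(ii) then places $S + \mu T$ in $\mathrm{dom}\, W_{k,l}$, and convexity gives $R_{\lambda, \mu} \in \mathrm{dom}\, W_{k,l}$. Running the same monotone-convergence argument along the direction $V = S - (1 - \mu) T$, with the upper bound $e^{T + V} = e^{S + \mu T} \in L^1(\mu_k)$, yields (after forcing $\int_X V e^T \mathrm{d} \mu_k$ to be finite)
\begin{equation*}
\alpha(S) \leq \frac{\int_X S e^T \mathrm{d} \mu_k}{\int_X e^T \mathrm{d} \mu_k} + (1 - \mu)\left( \alpha(T) - \frac{\int_X T e^T \mathrm{d} \mu_k}{\int_X e^T \mathrm{d} \mu_k} \right).
\end{equation*}
Taking $\mu \to 1^-$, which is legal because the bracketed difference is finite and non-negative by the previous step, gives $\alpha(S) \leq \int_X S e^T \mathrm{d} \mu_k / \int_X e^T \mathrm{d} \mu_k$. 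Since $-S \in \mathcal{K}(\mu_l)$ as well, applying the same estimate with $-S$ in place of $S$ yields the reverse inequality and hence equality.
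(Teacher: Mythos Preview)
Your argument is correct. Both parts go through: the monotone-convergence computation of $G'(0^+)$ and $H'(0^+)$ is sound (difference quotients of $e^{\lambda V}$ are monotone in $\lambda$ and integrably bounded at $\lambda=1$), and the finiteness of $\int V e^T\,\mathrm d\mu_k$ is indeed forced a posteriori by $\alpha(V)\in\mathbb R$. One minor simplification: you do not need the separate H\"older bound or an ``approximation argument'' to place $S+\mu T$ in $\mathrm{dom}\,W_{k,l}$; simply write $S+\mu T=(1-\mu)\,\tfrac{S}{1-\mu}+\mu T$ as a convex combination of $\tfrac{S}{1-\mu}\in\mathcal K(\mu_l)\subset\mathrm{dom}\,W_{k,l}$ and $T$, and invoke convexity of $W_{k,l}$ together with \cref{lem:WProperties}(ii).

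The paper takes a genuinely different route. It also tests against $(1-t)T+tS$, but with $t\to 0$ directly (no auxiliary $\mu$), using Jensen's inequality rather than monotone convergence to bound the difference quotients. To justify the limiting step it first proves $\int_X |S|\,e^T\,\mathrm d\mu_k<\infty$ via the Orlicz--H\"older inequality for the complementary Young pair $\Phi(x)=e^{|x|}-|x|-1$ and $\Psi(y)=(1+|y|)\ln(1+|y|)-|y|$, and then runs several dominated-convergence arguments. The conclusion is also reached differently: the paper obtains $\alpha(S-T)\le\beta(S-T)$ for all $S\in\mathcal K(\mu_l)$ and argues that a linear functional bounded above by a constant must vanish, whereas you pass $\mu\to 1$ and apply the estimate to $\pm S$.

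Your approach is more elementary and self-contained, avoiding Orlicz spaces entirely. The paper's route, while heavier, is not gratuitous: the Orlicz norms $N^k_\Phi$, $N^k_\Psi$ introduced here are reused verbatim in \cref{lem:EquiInterior} and in the proof of the main flow equation, where uniform bounds of the form $\sup_a N^a_\Psi(\exp[T_a])/\int e^{T_a}\,\mathrm d\mu_a<\infty$ are essential.
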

\begin{proof}
Let $\alpha \in \partial W'_{k,l}(T)$ and note that since $0 \in \mathrm{dom}\, W'_{k,l}$, for all $t \in (0,1)$,
\begin{equation}
\begin{aligned}
\alpha \left( T \right)
&\ge
\frac{1}{t} \left[ \ln \int_X \exp \left[ T \right] \mathrm{d} \mu_k - \ln \int_X \exp \left[ T - t T \right] \mathrm{d} \mu_k \right] \\
&=
\frac{1}{t} \ln \frac{\int_X \exp \left[ t T \right] \exp \left[ \left( 1 - t \right) T \right] \mathrm{d} \mu_k}{\int_X \exp \left[ \left( 1 - t \right) T \right] \mathrm{d} \mu_k}
\ge
\frac{\int_X T \exp \left[ \left( 1 - t \right) T \right] \mathrm{d} \mu_k}{\int_X \exp \left[ \left( 1 - t \right) T \right] \mathrm{d} \mu_k} \, .
\end{aligned}
\end{equation}
The last estimate follows from Jensen's inequality.
Using the monotone convergence theorem over the set $\{ T \ge 0 \}$ and the dominated convergence theorem over $\{ T < 0 \}$, it is then evident that
\begin{equation}
\alpha \left( T \right)
\ge
\lim_{t \searrow 0} \frac{\int_X T \exp \left[ \left( 1 - t \right) T \right] \mathrm{d} \mu_k}{\int_X \exp \left[ \left( 1 - t \right) T \right] \mathrm{d} \mu_k}
=
\frac{\int_X T \exp \left[ T \right] \mathrm{d} \mu_k}{\int_X \exp \left[ T \right] \mathrm{d} \mu_k} \, .
\end{equation}
In particular, the right-hand side is finite.
Now we take a short detour and define the two Young functions \cite[p. 15]{src:RaoRen:OrliczSpaces}
\begin{equation}
\Phi \left( x \right) = e^{\left| x \right|} - \left| x \right| - 1
\qquad\text{and}\qquad
\Psi \left( y \right) = \left( 1 + \left| y \right| \right) \ln \left( 1 + \left| y \right| \right) - \left| y \right|
\end{equation}
for all $x, y \in \mathbb{R}$.
Furthermore, consider the associated Orlicz norms for real-valued $\mu_k$-measurable functions $f$ and $g$ given by
\begin{equation}
N^k_\Phi \left( f \right)
=
\inf \left\{ r > 0 : \int_X \Phi \left( \frac{f}{r} \right) \mathrm{d} \mu_k \le 1 \right\}
,\,
N^k_\Psi \left( g \right)
=
\inf \left\{ r > 0 : \int_X \Psi \left( \frac{g}{r} \right) \mathrm{d} \mu_k \le 1 \right\} \, .
\end{equation}
Since $\Phi$ and $\Psi$ are convex, crude estimates for these are simply given by
\begin{equation}
N^k_\Phi \left( f \right)
\le
\max\left\{ 1, \int_X \Phi \left( f \right) \mathrm{d} \mu_k \right\}
\qquad\text{and}\qquad
N^k_\Psi \left( g \right)
\le
\max\left\{ 1, \int_X \Psi \left( g \right) \mathrm{d} \mu_k \right\} \, .
\end{equation}
Now, letting $S \in \mathcal{K}(\mu_l)$, we use the Orlicz-Hölder inequality (see e.g. \cite[p.58, Proposition 1, Remark]{src:RaoRen:OrliczSpaces}) to deduce that
\begin{equation}
\int_X \left| S \right| \exp \left[ T \right] \mathrm{d} \mu_k
\le
2 N^k_\Phi \left( S \right) N^k_\Psi \left( \exp \left[ T \right] \right) \, .
\end{equation}
Noting that $\exp[S] + \exp[-S]$ is $\mu_k$-integrable, we see that $\int_X \exp[ |S| ] \mathrm{d} \mu_k$ is finite and hence $N^k_\Phi(S) < \infty$.
Likewise, note that $\Psi(y) \le 1 + y \exp[y]$ for all $y \in \mathbb{R}$ such that,
\begin{equation}
\int_X \Psi \left( \exp \left[ T \right] \right) \mathrm{d} \mu_k
\le
1 + \int_X T \exp \left[ T \right] \mathrm{d} \mu_k
<
\infty \, ,
\end{equation}
such that $N^k_\Psi ( \exp [ T ] ) < \infty$.
Consequently, we may define the linear functional $\beta : \mathrm{span}\, ( \{ T \} \cup \mathcal{K}(\mu_l) ) \to \mathbb{R}$ with
\begin{equation}
R \mapsto \frac{\int_X R \exp \left[ T \right] \mathrm{d} \mu_k}{\int_X \exp \left[ T \right] \mathrm{d} \mu_k} \, .
\end{equation}
Now, consider the family $R_t = (1-t) T + t S$ and note that since $\alpha$ is a subdifferential,
\begin{equation}
\begin{aligned}
\alpha \left( S - T \right)
&=
\frac{1}{t} \alpha \left( R_t - T \right)
\le
\frac{1}{t} \left[ W_{k,l} \left( R_t \right) - W_{k,l} \left( T \right) \right] \\
&=
- \frac{1}{t} \ln \frac{ \int_X \exp \left[ t \left( T - S \right) \right] \exp \left[ \left( 1 - t \right) T + t S \right] \mathrm{d} \mu_k }{\int_X \exp \left[ \left( 1 - t \right) T + t S \right] \mathrm{d} \mu_k} \\
&\le
\frac{ \int_X \left( S - T \right) \exp \left[ \left( 1 - t \right) T + t S \right] \mathrm{d} \mu_k }{\int_X \exp \left[ \left( 1 - t \right) T + t S \right] \mathrm{d} \mu_k} \, ,
\end{aligned}
\end{equation}
where the last estimate follows from Jensen's inequality.
We want to show that the right-hand side tends to $\beta(S - T)$ as $t \searrow 0$.
The denominator is continuous for $t \in [0,1]$ and poses no problem.
By convexity, $| S | \exp[ (1-t)T + tS ] \le |S| \exp[T] + |S| \exp[S]$, such that
\begin{equation}
\lim_{ t  \searrow 0 } \int_X S \exp \left[ \left( 1 - t \right) T + t S \right] \mathrm{d} \mu_k
=
\int_X S \exp \left[ T \right] \mathrm{d} \mu_k \, ,
\end{equation}
by the dominated convergence theorem.
For the remaining part, note that, by Hölder's inequality,
\begin{equation}
\begin{aligned}
\int_X \max \left\{ T, 0 \right\} \exp \left[ S \right] \mathrm{d} \mu_k
&\le
\int_X \exp \left[ S + \frac{1}{2} \max \left\{ T, 0 \right\} \right] \mathrm{d} \mu_k \\
&\le
\sqrt{ \int_X \exp \left[ 2 S \right] \mathrm{d} \mu_k \int_X \left( 1 + \exp \left[ T \right] \right) \mathrm{d} \mu_k }
<
\infty \, .
\end{aligned}
\end{equation}
Hence, using $\exp[ (1-t)T + tS ] \le \exp[T] + \exp[S]$ and the dominated convergence theorem again, we find that
\begin{equation}
\lim_{ t  \searrow 0 } \int_X \max \left\{ T, 0 \right\} \exp \left[ \left( 1 - t \right) T + t S \right] \mathrm{d} \mu_k
=
\int_X \max \left\{ T, 0 \right\} \exp \left[ T \right] \mathrm{d} \mu_k \, .
\end{equation}
Finally, for $t < 1/2$, we have $\max \{ -T, 0 \} \exp [ ( 1 - t ) T + t S ] \le \exp [ \max\{ S, 0 \} ]$ such that
\begin{equation}
\lim_{ t  \searrow 0 } \int_X \max \left\{ - T, 0 \right\} \exp \left[ \left( 1 - t \right) T + t S \right] \mathrm{d} \mu_k
=
\int_X \max \left\{ - T, 0 \right\} \exp \left[ T \right] \mathrm{d} \mu_k \, .
\end{equation}
Consequently, for all $S \in \mathcal{K}(\mu_l)$,
\begin{equation}
\begin{aligned}
\alpha \left( S - T \right)
\le
\frac{ \int_X \left( S - T \right) \exp \left[ T \right] \mathrm{d} \mu_k }{\int_X \exp \left[ T \right] \mathrm{d} \mu_k}
=
\beta \left( S - T \right) \, .
\end{aligned}
\end{equation}
It follows that $(\alpha - \beta)(S) \le (\alpha - \beta)(T)$ i.e. the linear functional $(\alpha - \beta) \upharpoonright \mathcal{K}(\mu_l)$ is majorised by a constant.
Evidently, that can only happen if $\alpha$ and $\beta$ coincide on $\mathcal{K}(\mu_l)$.
\end{proof}
The striking consequence is that suitable $\beta \in \mathrm{int}\, \mathrm{dom}\, V_{k,l}^*$ have simple integral expressions.
\begin{theorem}
\label{thm:MlyRepresentation}
Let $0 \le k \le l$ and $\beta \in \mathcal{M}(\mu_l)^*$ be such that $\beta \in \mathrm{int}\, \mathrm{dom}\, V_{k,l}^*$ and $T_k \in \mathcal{L}(\mu_l)$ satisfies the corresponding conclusion of \cref{cor:WklMinimisation}.
Then, $T_k \in \partial (W'_{k,l})^*( \beta )$ and for all $S \in \mathcal{K}(\mu_l)$,
\begin{equation}
\label{eq:MlyRepresentation}
\beta \left( S \right)
=
\frac{\int_X S \exp \left[ T_k \right] \mathrm{d} \mu_k}{\int_X \exp \left[ T_k \right] \mathrm{d} \mu_k}
\qquad\text{and}\qquad
\beta \left( T_k \right)
\ge
\frac{\int_X T_k \exp \left[ T_k \right] \mathrm{d} \mu_k}{\int_X \exp \left[ T_k \right] \mathrm{d} \mu_k}
\, .
\end{equation}
\end{theorem}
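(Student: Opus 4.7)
The plan is to reduce the claim to \cref{lem:WPrimeSubdifferentialRepresentation}, which already packages the required integral representation for any element of $\partial W'_{k,l}(T_k)$. The key preliminary observation is that since $\mu_l$ is finite, convergence in $L^1(\mu_l)$ implies convergence in $\mu_l$-measure, so the canonical inclusion $\mathcal{L}(\mu_l) \hookrightarrow \mathcal{M}(\mu_l)$ is continuous. Consequently, the restriction of $\beta \in \mathcal{M}(\mu_l)^*$ to $\mathcal{L}(\mu_l)$ is a bounded linear functional in the $L^1(\mu_l)$-norm, i.e.\ a genuine element of the dual Banach space in which $\partial W'_{k,l}(T_k)$ naturally lives.

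Next I would invoke \cref{cor:WklMinimisation}: since $T_k$ minimises $W_{k,l} - \beta$ on $\mathcal{L}(\mu_l)$, a trivial rearrangement produces
\begin{equation}
W_{k,l}(S) \ge W_{k,l}(T_k) + \beta(S - T_k) \qquad \text{for all } S \in \mathcal{L}(\mu_l) \, ,
\end{equation}
which is precisely the statement $\beta \in \partial W'_{k,l}(T_k)$. Applying \cref{lem:WPrimeSubdifferentialRepresentation} with the substitutions $\alpha = \beta$ and $T = T_k$ then immediately delivers both formulas in \cref{eq:MlyRepresentation}.

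To conclude $T_k \in \partial (W'_{k,l})^*(\beta)$, I would invoke the Fenchel-Young principle: $\beta \in \partial W'_{k,l}(T_k)$ is equivalent to $W'_{k,l}(T_k) + (W'_{k,l})^*(\beta) = \beta(T_k)$, which in turn is equivalent to $T_k \in \partial (W'_{k,l})^*(\beta)$ whenever $(W'_{k,l})^{**} = W'_{k,l}$. The required biconjugacy is provided by the convexity of $W'_{k,l}$ (from the log-Laplace structure of $\ln \int \exp[\cdot]\,\mathrm{d}\mu_k$) together with lower semicontinuity in the $L^1(\mu_l)$-norm, which holds because $\overline{W}_{k,l}$ is lsc under the coarser convergence-in-measure topology by construction.

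The only genuinely nontrivial observation in the entire outline is the bridging one, namely that $\beta$, a priori continuous only in $\mu_l$-measure, is automatically $L^1$-continuous on $\mathcal{L}(\mu_l)$; this is what allows \cref{lem:WPrimeSubdifferentialRepresentation} to be invoked. All the substantive analytic work — Orlicz-Hölder estimates, Jensen-style inequalities for the measure $\exp[T_k]\,\mathrm{d}\mu_k$, and the two-sided dominated-convergence arguments — has already been packaged into that lemma, so I do not expect any essential obstacle beyond the reduction itself.
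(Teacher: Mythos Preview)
Your proposal is correct and follows essentially the same route as the paper: reduce to \cref{lem:WPrimeSubdifferentialRepresentation} by showing $\beta \in \partial W'_{k,l}(T_k)$ from the minimisation property of $T_k$, and use Fenchel--Young duality to pass between $\beta \in \partial W'_{k,l}(T_k)$ and $T_k \in \partial (W'_{k,l})^*(\beta)$. The paper merely reverses the order (deriving $T_k \in \partial (W'_{k,l})^*(\beta)$ first via the subdifferential inequality for the conjugate, then invoking lower semicontinuity to get $\beta \in \partial W'_{k,l}(T_k)$); note also that the implication $\beta \in \partial W'_{k,l}(T_k) \Rightarrow T_k \in \partial (W'_{k,l})^*(\beta)$ you need is the easy direction and does not actually require biconjugacy.
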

\begin{proof}
Let $\gamma \in ( \mathcal{L}(\mu_l), L^1(\mu_l) )^*$ and note that by definition,
\begin{equation}
(W'_{k,l})^* \left( \beta \right)
+
\gamma \left( T_k \right)
=
\left( \beta + \gamma \right) \left( T_k \right)
-
W'_{k,l} \left( T_k \right)
\le
(W'_{k,l})^* \left( \beta + \gamma \right) \, .
\end{equation}
Hence, $T_k \in \partial (W'_{k,l})^*( \beta )$ and since $W_{k,l}'$ is lower semicontinuous it follows that $\beta \in \partial W'_{k,l}(T_k)$ (see e.g. \cite[Theorem 2.4.4(iv)]{src:Zalinescu:ConvexAnalysisInGeneralVectorSpaces}).
Now, \cref{eq:MlyRepresentation} follows from \cref{lem:WPrimeSubdifferentialRepresentation}.
\end{proof}
Having understood these minimisation properties of $W_{k,l} - \beta$, we just need the following equi-coercivity property to derive a differential equation for $V_k^*(y)$.
\begin{lemma}
\label{lem:EquiInterior}
Let $0 \le k \le l$ and $y \in \mathcal{A}(\mu)$ with $M_l(y) \in \mathrm{int}\, \mathrm{dom}\, V_{k,l}^*$.
Then, for each $a \in [0, k]$, $M_l(y) \in \mathrm{int}\, \mathrm{dom}\, V_{a,l}^*$.
Furthermore, there exist $A > 0$ and $B \in \mathbb{R}$ such that for all $a \in [0, k]$ and all $T \in \mathcal{L}(\mu_l)$,
\begin{equation}
\label{eq:EquiNormCoercivity}
W_{a,l} \left( T \right) - M_l(y) \left( T \right)
\ge
A \left \Vert T \right\Vert_{L^1(\mu_l)} + B \, .
\end{equation}
Moreover,
\begin{equation}
\label{eq:EquiEstimate}
\sup_{a \in [0, k]} \frac{N^a_\Psi \left( \exp \left[ T_a \right] \right)}{\int_X \exp \left[ T_a \right] \mathrm{d} \mu_a}
<
\infty \, .
\end{equation}
\end{lemma}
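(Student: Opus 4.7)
The plan is to exploit the monotonicity $Q_a \le Q_k$ $\mu$-almost everywhere for $a \le k$, guaranteed by \cref{thm:InnerProductOrderDiagonal}, which rewrites as $\mathrm{d}\mu_a = (N_k/N_a) \exp[(Q_k - Q_a)/2]\,\mathrm{d}\mu_k$ with exponential factor $\ge 1$. This yields $V_{a,l}(T) \ge V_{k,l}(T) + \ln(N_k/N_a)$ on $\mathcal{K}(\mu_l)$ and, after passing to the lower semicontinuous envelope, $W_{a,l}(T) \ge W_{k,l}(T) + \ln(N_k/N_a)$ on all of $\mathcal{L}(\mu_l)$. Since $a \mapsto N_a$ is decreasing, for $a \in [0,k]$ one has $\ln(N_k/N_a) \ge \ln(N_k/N_0)$, giving a uniform bound. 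Dualising produces $V_{a,l}^*(\beta) \le V_{k,l}^*(\beta) + \ln(N_0/N_k)$, from which the first assertion follows at once, and adding $\ln(N_k/N_0)$ to the coercivity constant $C$ supplied by $W_{k,l}(T) - M_l(y)(T) \ge A\|T\|_{L^1(\mu_l)} + C$ (from \cref{thm:L1CoercivityOnInterior}) yields \cref{eq:EquiNormCoercivity}.

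For \cref{eq:EquiEstimate}, combining \cref{lem:WPrimeSubdifferentialRepresentation} with \cref{thm:MlyRepresentation} gives $N^a_\Psi(\exp[T_a]) \le \max\{1, 1 + M_l(y)(T_a) Z_a\}$ where $Z_a := \int_X \exp[T_a]\,\mathrm{d}\mu_a$, so
\begin{equation*}
\frac{N^a_\Psi(\exp[T_a])}{Z_a} \le \frac{1}{Z_a} + \max\{0, M_l(y)(T_a)\},
\end{equation*}
and the task reduces to uniform upper bounds on $1/Z_a$ and on $M_l(y)(T_a)$. The first is easy: the reverse monotonicity $Q_l \ge Q_a$ supplies $\mathrm{d}\mu_a \ge (N_l/N_a)\,\mathrm{d}\mu_l$, while $\int_X T_a\,\mathrm{d}\mu_l \ge 0$ by \cref{lem:WProperties} combined with Jensen's inequality forces $Z_a \ge (N_l/N_a) \exp[\int_X T_a\,\mathrm{d}\mu_l] \ge N_l/N_0$.

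The main obstacle is the second bound: a priori the minimisers $T_a$ are only $L^1(\mu_l)$-bounded (by the first part of the lemma), which is not enough to control a measure-continuous functional. My approach is a perturbation argument. Because $M_l(y) \in \mathcal{M}(\mu_l)^*$ is continuous in $\mu_l$-measure and the subspace topology on $\mathcal{K}(\mu_l)$ inherited from $\mathcal{M}(\mu_l)$ is coarser than its $L^1(\mu_l)$-norm, $M_l(y)$ automatically restricts to a bounded functional with $\|M_l(y)\|_l^* < \infty$; the degenerate case $\|M_l(y)\|_l^* = 0$ would force $M_l(y) \equiv 0$ by continuity and makes the claim trivial. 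Since $M_l(y) \in \mathrm{int}\,\mathrm{dom}\, V_{k,l}^*$, there exists $\epsilon_0 > 0$ with $M := \sup\{V_{k,l}^*(M_l(y) + \alpha) : \|\alpha\|_l^* \le \epsilon_0\} < \infty$. Setting $\lambda := 1 + \epsilon_0/(2\|M_l(y)\|_l^*) > 1$, the functional $(\lambda-1) M_l(y)$ lies in this ball, so by the first part of the lemma $V_{a,l}^*(\lambda M_l(y)) \le M + \ln(N_0/N_k)$ uniformly in $a$. On the other hand, testing the supremum defining $V_{a,l}^* = W_{a,l}^*$ against $T_a$,
\begin{equation*}
V_{a,l}^*(\lambda M_l(y)) \ge \lambda M_l(y)(T_a) - W_{a,l}(T_a) = (\lambda - 1) M_l(y)(T_a) + V_a^*(y) \ge (\lambda - 1) M_l(y)(T_a),
\end{equation*}
where the final inequality uses $V_a^*(y) \ge y(0) - V_a(0) = 0$. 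Dividing by $\lambda - 1 > 0$ yields the required $a$-uniform upper bound on $M_l(y)(T_a)$ and closes the argument.
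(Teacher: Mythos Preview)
Your argument is correct, and the first part (uniform coercivity and the interior assertion) is essentially identical to the paper's: both use the monotonicity $Q_a\le Q_k$ to get $V_{a,l}\ge V_{k,l}+\ln(N_k/N_a)$, pass to $W_{a,l}$, and then invoke the equivalence in \cref{thm:L1CoercivityOnInterior} together with the linear lower bound furnished by \cite[Exercise~2.41]{src:Zalinescu:ConvexAnalysisInGeneralVectorSpaces}.

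For \cref{eq:EquiEstimate} the two proofs diverge. Both bound the denominator $Z_a=\int_X\exp[T_a]\,\mathrm{d}\mu_a$ from below and reduce the numerator to a control of $M_l(y)(T_a)$ via \cref{thm:MlyRepresentation}. The paper then finishes in one line: the uniform coercivity bound just proved, together with the continuity of $a\mapsto V_a^*(y)=W_{a,l}^*(M_l(y))$ on $[0,k]$ from \cref{thm:VAstContinuity}, forces $(T_a)_{a\in[0,k]}$ to be $L^1(\mu_l)$-bounded; since $M_l(y)\in\mathcal{M}(\mu_l)^*$ and $L^1$-convergence implies convergence in measure, $M_l(y)$ restricts to a \emph{bounded} linear functional on $(\mathcal{L}(\mu_l),\|\cdot\|_{L^1(\mu_l)})$, so $|M_l(y)(T_a)|\le C\sup_a\|T_a\|_{L^1(\mu_l)}<\infty$. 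Your assertion that $L^1$-boundedness ``is not enough to control a measure-continuous functional'' is therefore mistaken: it is exactly enough, and this is the paper's route. Your scaling/perturbation argument via $\lambda M_l(y)$ is a valid alternative---it cleverly avoids any appeal to \cref{thm:VAstContinuity} and stays entirely within the conjugate picture---but it is longer than necessary here.
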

\begin{proof}
Applying \cref{thm:L1CoercivityOnInterior}, we find that $W_{k,l} - M_l(y)$ is $L^1(\mu_l)$-norm coercive.
Hence, by \cite[Exercise 2.41]{src:Zalinescu:ConvexAnalysisInGeneralVectorSpaces}, there exist $A > 0$ and $C \in \mathbb{R}$ such that
\begin{equation}
W_{k,l} \left( T \right) - M_l(y) \left( T \right)
\ge
A \left \Vert T \right\Vert_{L^1(\mu_l)} + C \, .
\end{equation}
Now, for all $a \in [0, k]$ and $T \in \mathcal{K}(\mu_l)$,
\begin{equation}
V_{a,l} \left( T \right)
=
\ln \int_X \exp \left[ T \right] \mathrm{d} \mu_a
\ge
\ln \frac{N_k}{N_a} + V_{k,l} \left( T \right) \, .
\end{equation}
By construction, it follows that $W_{a,l} \ge W_{k,l} + \ln (N_k/N_a)$ and thus
\begin{equation}
W_{a,l} \left( T \right) - M_l(y) \left( T \right)
\ge
W_{k,l} \left( T \right) - M_l(y) \left( T \right) + \ln \frac{N_k}{N_a}
\ge
A \left \Vert T \right\Vert_{L^1(\mu_l)} + C + \ln N_k \, ,
\end{equation}
for all $T \in \mathcal{L}(\mu_l)$.
That $M_l(y) \in \mathrm{int}\, \mathrm{dom}\, V_{a,l}^*$ follows from \cref{thm:L1CoercivityOnInterior}.

To verify \cref{eq:EquiEstimate}, note that the denominator $\int_X \exp [ T_a ] \mathrm{d} \mu_a$ is bounded away from zero, since
\begin{equation}
\int_X \exp \left[ T_a \right] \mathrm{d} \mu_a
\ge
\int_X \exp \left[ T_a \right] \mathrm{d} \mu_l \frac{N_l}{N_a}
\ge
\int_X \exp \left[ T_a - \int_X T_a \mathrm{d} \mu_l \right] \mathrm{d} \mu_l \frac{N_l}{N_a}
\ge
N_l \, .
\end{equation}
The second inequality follows because $T_a \in \mathrm{dom}\, W_{a,l}$ implies $\int_X T_a \mathrm{d} \mu_l \ge 0$ and the third inequality by the same reasoning as in \cref{cor:WklLowerBound}.
Furthermore,
\begin{equation}
N^a_\Psi \left( \exp \left[ T_a \right] \right)
\le
1
+
\int_X \Psi \left( \exp \left[ T_a \right] \right) \mathrm{d} \mu_a
\le
2
+
\int_X T_a \exp \left[ T_a \right] \mathrm{d} \mu_a \, .
\end{equation}
Combining the above and \cref{thm:MlyRepresentation}, we arrive at
\begin{equation}
\frac{N^a_\Psi \left( \exp \left[ T_a \right] \right)}{\int_X \exp \left[ T_a \right] \mathrm{d} \mu_a}
\le
M_l(y) \left( T_a \right) + \frac{2}{N_l} \, ,
\end{equation}
Suppose that $(T_a)_{a \in [0, k]}$ is $L^1(\mu_l)$-unbounded.
Then, \cref{eq:EquiNormCoercivity} shows that the function $a \mapsto W_{a,l}^*( M_l(y) )$ is unbounded on $[0,k]$.
However, by \cref{cor:VkAstEqualsVklAstEqualsWklAst,thm:VAstContinuity}, this is a contradiction.
Hence, $(T_a)_{a \in [0, k]}$ is $L^1(\mu_l)$-bounded such that by continuity, $M_l ( y ) ( T_a )$ is bounded as well.
\end{proof}
Now, we can finally prove that $k \mapsto V_k^*(y)$ satisfies a differential equation.
\begin{theorem}
Let $0 \le k \le l$ and $y \in \mathcal{A}(\mu)$ be such that $M_l(y) \in \mathrm{int}\, \mathrm{dom}\, V_{k,l}^*$ and for all $a \in [0, k]$, let $T_a \in \mathcal{L}(\mu_l)$ be as in \cref{cor:WklMinimisation}.
Then, the function $V^*(y) : [0, k] \to \mathbb{R}, a \mapsto V_a^*(y)$ is absolutely continuous and for almost every $a \in [0, k]$,
\begin{equation}
\label{eq:VAstDerivative}
\frac{\mathrm{d}}{\mathrm{d} a} V_a^* \left( y \right)
=
\frac{\int_X Q'_a \exp \left[ T_a \right] \mathrm{d} \mu_a}{\int_X \exp \left[ T_a \right] \mathrm{d} \mu_a}
-
\frac{1}{2} \int_X Q_a' \mathrm{d} \mu_a \, .
\end{equation}
\end{theorem}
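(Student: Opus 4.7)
The strategy is to apply \cref{lem:FiniteDiniImpliesAbsoluteContinuity} to the auxiliary function $G : [0, k] \to \mathbb{R}$, $a \mapsto V_a^*(y) - \ln N_a$. The estimate $V_a(\phi) \ge V_b(\phi) + \ln(N_b/N_a)$ used in the proof of \cref{thm:VAstContinuity} gives, for $a \le b \le k$, $V_b^*(y) - V_a^*(y) \ge \ln(N_b/N_a)$, so $G$ is monotonically increasing. Continuity of $G$ follows from \cref{thm:VAstContinuity,cor:LogNkAbsoluteContinuity}, so it suffices to show $D^+ G(a) < \infty$ for every $a \in [0, k)$; then $V^*(y) = G + \ln N$ will be absolutely continuous by \cref{cor:LogNkAbsoluteContinuity}.

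For each $a \in [0, k)$, let $T_a \in \mathcal{L}(\mu_l)$ be the minimiser of $W_{a,l} - M_l(y)$ from \cref{cor:WklMinimisation}, whose existence is ensured by \cref{lem:EquiInterior}. Using $T_a$ as a test point in $V_{a+h}^*(y)$ and $T_{a+h}$ in $V_a^*(y)$ via \cref{cor:VkAstEqualsVklAstEqualsWklAst} yields the sandwich
\begin{equation*}
W_{a,l}(T_a) - W_{a+h,l}(T_a) \le V_{a+h}^*(y) - V_a^*(y) \le W_{a,l}(T_{a+h}) - W_{a+h,l}(T_{a+h}) \, .
\end{equation*}
A direct computation combined with Jensen's inequality for the convex function $-\ln$ gives, for any $T \in \mathcal{L}(\mu_l)$,
\begin{equation*}
W_{a,l}(T) - W_{a+h,l}(T) \le \ln \tfrac{N_{a+h}}{N_a} + \tfrac{1}{2} \int_X (Q_{a+h} - Q_a) \, \mathrm{d}\nu_T^a \, ,
\end{equation*}
where $\nu_T^a$ denotes the probability measure $\exp[T] \mathrm{d}\mu_a / \int_X \exp[T] \mathrm{d}\mu_a$. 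Applying this with $T = T_{a+h}$ and dividing by $h$, the $h^{-1} \ln(N_{a+h}/N_a)$ term is bounded by \cref{lem:NkRightDiffability}, while the Orlicz-Hölder inequality with the pair $(\Phi, \Psi)$ from the proof of \cref{lem:WPrimeSubdifferentialRepresentation} factorises the remaining integral into an $N_\Phi^a$-norm of $(Q_{a+h} - Q_a)/(2h)$---controlled via \cref{eq:QDerivativeBound,eq:QDerivativeCompatibility}---and the ratio $N_\Psi^a(\exp[T_{a+h}])/\int_X \exp[T_{a+h}] \mathrm{d}\mu_a$, uniformly bounded in $h$ via \cref{eq:EquiEstimate}. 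This establishes $D^+ G(a) < \infty$ and hence absolute continuity.

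At any $a \in [0,k]$ where $V^*(y)$ is differentiable, the limit of the difference quotient coincides with both Dini derivatives. Applying the lower end of the sandwich with fixed $T = T_a$, the estimate \cref{eq:ExponentialQDifferenceDecreasingEstimate} together with \cref{eq:QDerivativeCompatibility,eq:QDerivativeBound} justifies dominated convergence, letting one pass to the limit inside the integral and reproduce the right-hand side of \cref{eq:VAstDerivative}. A refined analysis of the upper end using the same Orlicz-Hölder factorisation (now in the limit, not just as an a priori bound) provides the matching upper estimate. The main obstacle will be treating the upper end cleanly: because $T_{a+h}$ varies with $h$, a direct dominated convergence argument is unavailable, and the Orlicz-Hölder decomposition combined with the equi-coercivity of \cref{lem:EquiInterior,eq:EquiEstimate} is essential to isolate the $h$-dependent weight from the difference quotient whose convergence is handled by \cref{eq:QDerivativeCompatibility}.
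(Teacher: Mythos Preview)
Your overall architecture is sound --- define $G(a)=V_a^*(y)-\ln N_a$, show it is increasing and continuous, bound $D^+G$, invoke \cref{lem:FiniteDiniImpliesAbsoluteContinuity} --- and this is exactly the paper's strategy. The genuine gap is in the upper half of your sandwich.

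Your upper bound pairs $T_{a+h}$ with $\mu_a$: after Jensen you obtain $\tfrac{1}{2}\int (Q_{a+h}-Q_a)\,\mathrm{d}\nu_{T_{a+h}}^a$, i.e.\ the weight $\exp[T_{a+h}]$ integrated against $\mu_a$. There are two problems with this. First, $T_{a+h}\in\mathrm{dom}\,W_{a+h,l}$ does \emph{not} imply $T_{a+h}\in\mathrm{dom}\,W_{a,l}$, because $\exp[-Q_a/2]\ge\exp[-Q_{a+h}/2]$ and the inequality goes the wrong way; so $W_{a,l}(T_{a+h})$ may simply be $+\infty$, making the sandwich vacuous. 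Second, even when finite, your Orlicz--H\"older step needs the ratio $N_\Psi^a(\exp[T_{a+h}])/\int\exp[T_{a+h}]\,\mathrm{d}\mu_a$ bounded in $h$, but \cref{eq:EquiEstimate} only controls $N_\Psi^c(\exp[T_c])/\int\exp[T_c]\,\mathrm{d}\mu_c$ with \emph{matching} subscripts; it says nothing about the mismatched pair $(T_{a+h},\mu_a)$. Your ``refined analysis of the upper end'' is therefore not just a matter of care --- the estimate you cite does not apply.

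The paper avoids this by \emph{not} using the naive sandwich directly. Instead it approximates $T_a,T_b$ by sequences $S_n^a,S_n^b\in\mathcal{K}(\mu_l)$, applies Jensen inside the $\mu_a$- resp.\ $\mu_b$-integral, and then uses the integral representation of $M_l(y)$ from \cref{thm:MlyRepresentation} (specifically that $M_l(y)(S)=\int S\exp[T_c]\,\mathrm{d}\mu_c/\int\exp[T_c]\,\mathrm{d}\mu_c$ for $S\in\mathcal{K}(\mu_l)$ and the inequality $M_l(y)(T_c)\ge\int T_c\exp[T_c]\,\mathrm{d}\mu_c/\int\exp[T_c]\,\mathrm{d}\mu_c$) to absorb the $M_l(y)(T_b-T_a)$ terms. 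The outcome is an upper bound with weight $\exp[T_a]$ against $\mu_a$ and a lower bound with weight $\exp[T_b]$ against $\mu_b$ --- matching subscripts in both cases. One then takes $b\searrow a$ for $D^+f_y(a)$ and $a\nearrow b$ for $D_-f_y(b)$ with the test function held fixed each time, so no equi-estimate is needed at all; finally a result of Bruckner forces $D^+f_y=D_-f_y$ off a countable set. Your lower bound with fixed $T_a$ does give the right $\liminf$, but you still need the matching $\limsup$, and for that the use of \cref{thm:MlyRepresentation} is the missing idea.
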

\begin{proof}
By \cref{lem:EquiInterior,thm:MlyRepresentation}, for each $a \in [0,k]$, there exists $T_a \in \mathcal{L}(\mu_l)$ such that \cref{eq:MlyRepresentation} holds and these $T_a$ minimise $W_{a,l} - M_l(y)$.
Moreover, by \cref{cor:VkAstEqualsVklAstEqualsWklAst}, we have
\begin{equation}
V_a^* \left( y \right) = M_l \left( y \right) \left( T_a \right) - W_{a,l} \left( T_a \right) \, .
\end{equation}
Pick a sequence $(S^a_n)_{n \in \mathbb{N}}$ in $\mathcal{K}(\mu_l)$ with $\lim_{n \to \infty} V_{a,l}(S^a_n) = W_{a,l}(T_a)$ and converging to $T_a$ in $\mu_l$-measure.
Also, for every $b \in (a, k]$, write $\Delta_a^b = (Q_b - Q_a)/2$.
Then, by Jensen's inequality, and the continuity of $M_l(y)$ with respect to convergence in $\mu_l$-measure,
\begin{equation}
\begin{aligned}
V_b^* \left( y \right) - V_a^* \left( y \right)
&=
M_l \left( y \right) \left( T_b - T_a \right)
+
\ln \frac{\int_X \exp \left[ T_a \right] \mathrm{d} \mu_a}{\int_X \exp \left[ T_b \right] \mathrm{d} \mu_b} \\
&=
M_l \left( y \right) \left( T_b - T_a \right)
+
\lim_{n \to \infty}
\ln \frac{\int_X \exp \left[ S^a_n - T_b + \Delta_a^b \right] \exp \left[ T_b \right] \mathrm{d} \mu_b}{\int_X \exp \left[ T_b \right] \mathrm{d} \mu_b}
+
\ln \frac{N_b}{N_a} \\
&\ge
M_l \left( y \right) \left( T_b - T_a \right)
+
\liminf_{n \to \infty}
\frac{\int_X \left( S^a_n -T_b + \Delta_a^b \right) \exp \left[ T_b \right] \mathrm{d} \mu_b}{\int_X \exp \left[ T_b \right] \mathrm{d} \mu_b}
+
\ln \frac{N_b}{N_a} \\
&=
M_l \left( y \right) \left( T_b \right)
-
\frac{\int_X T_b \exp \left[ T_b \right] \mathrm{d} \mu_b}{\int_X \exp \left[ T_b \right] \mathrm{d} \mu_b}
+
\frac{\int_X \Delta_a^b \exp \left[ T_b \right] \mathrm{d} \mu_b}{\int_X \exp \left[ T_b \right] \mathrm{d} \mu_b}
+
\ln \frac{N_b}{N_a} \, .
\end{aligned}
\end{equation}
Thus, by \cref{eq:MlyRepresentation},
\begin{equation}
\begin{aligned}
\label{eq:VkAstDifferenceLowerBound}
V_b^* \left( y \right) - V_a^* \left( y \right)
\ge
\frac{\int_X \Delta_a^b \exp \left[ T_b \right] \mathrm{d} \mu_b}{\int_X \exp \left[ T_b \right] \mathrm{d} \mu_b}
+
\ln \frac{N_b}{N_a} \, .
\end{aligned}
\end{equation}
In the same spirit, we also have
\begin{equation}
\begin{aligned}
V_b^* \left( y \right) - V_a^* \left( y \right)
&=
M_l \left( y \right) \left( T_b - T_a \right)
-
\ln \frac{\int_X \exp \left[ T_b \right] \mathrm{d} \mu_b}{\int_X \exp \left[ T_a \right] \mathrm{d} \mu_a} \\
&=
M_l \left( y \right) \left( T_b - T_a \right)
-
\lim_{n \to \infty}
\ln \frac{\int_X \exp \left[ S^b_n - T_a - \Delta_a^b \right] \exp \left[ T_a \right] \mathrm{d} \mu_a}{\int_X \exp \left[ T_a \right] \mathrm{d} \mu_a}
+
\ln \frac{N_b}{N_a} \\
&\le
M_l \left( y \right) \left( T_b - T_a \right)
+
\limsup_{n \to \infty}
\frac{\int_X \left( T_a - S^b_n + \Delta_a^b \right) \exp \left[ T_a \right] \mathrm{d} \mu_a}{\int_X \exp \left[ T_a \right] \mathrm{d} \mu_a}
+
\ln \frac{N_b}{N_a} \\
&=
\frac{\int_X T_a \exp \left[ T_a \right] \mathrm{d} \mu_a}{\int_X \exp \left[ T_a \right] \mathrm{d} \mu_a}
-
M_l \left( y \right) \left( T_a \right)
+
\frac{\int_X \Delta_a^b \exp \left[ T_a \right] \mathrm{d} \mu_a}{\int_X \exp \left[ T_a \right] \mathrm{d} \mu_a}
+
\ln \frac{N_b}{N_a} \, .
\end{aligned}
\end{equation}
Using \cref{eq:MlyRepresentation} again,
\begin{equation}
\label{eq:VkAstDifferenceUpperBound}
V_b^* \left( y \right) - V_a^* \left( y \right)
\le
\frac{\int_X \Delta_a^b \exp \left[ T_a \right] \mathrm{d} \mu_a}{\int_X \exp \left[ T_a \right] \mathrm{d} \mu_a}
+
\ln \frac{N_b}{N_a} \, .
\end{equation}
For brevity, write $\sigma_a^b = Q_b - Q_a - \left( b - a \right) Q'_a$.
Focusing on the first term on the right-hand side, we now want to show that
\begin{equation}
\lim_{b \searrow a}
\int_X \frac{1}{2 \left( b - a \right)} \sigma_a^b \exp \left[ T_a \right] \mathrm{d} \mu_a
=
0 \, .
\end{equation}
To that end, we use the Orlicz-Hölder inequality as in the proof of \cref{lem:WPrimeSubdifferentialRepresentation} and obtain
\begin{equation}
\int_X \left| \frac{1}{2 \left( b - a \right)} \sigma_a^b \right| \exp \left[ T_a \right] \mathrm{d} \mu_a
\le
2 N^a_\Phi \left( \frac{1}{2 \left( b - a \right)} \sigma_a^b \right) N^a_\Psi \left( \exp \left[ T_a \right] \right) \, .
\end{equation}
It is again clear, that $N^a_\Psi \left( \exp \left[ T_a \right] \right)$ is finite.
Now, for any monotonically decreasing sequence $(b_n)_{n \in \mathbb{N}}$ in $(a,l)$ with $\inf_{n \in \mathbb{N}} b_n = a$ we invoke \cref{eq:QDerivativeCompatibility} to find a sequence $(r_n)_{n \in \mathbb{N}}$ in $[0, \infty)$ with $\lim_{n \to \infty} r_n = \infty$ such that
\begin{equation}
\lim_{n \to \infty} \int_X \exp \left[ \left| \frac{r_n}{2 \left( b_n - a \right)} \left( Q_{b_n} - Q_a \right) - \frac{r_n}{2} Q'_a \right| - \frac{1}{2} Q_a \right] \mathrm{d} \mu
=
\int_X \exp \left[ - \frac{1}{2} Q_a \right] \mathrm{d} \mu \, .
\end{equation}
Consequently,
\begin{equation}
\lim_{n \to \infty} \int_X \Phi \left( \frac{r_n}{2 \left( b_n - a \right)} \sigma_a^b \right) \mathrm{d} \mu_a
\le
1
\quad\text{i.e.}\quad
\lim_{n \to \infty} N^a_\Phi \left( \frac{1}{2 \left( b_n - a \right)} \sigma_a^b \right)
=
0 \, .
\end{equation}
Defining the function $f_y : [0, k], a \mapsto V_a^*(y) - \ln N_a$, we insert this into \cref{eq:VkAstDifferenceUpperBound} and obtain,
\begin{equation}
D^+ f_y \left( a \right)
\le
\frac{1}{2} \frac{\int_X Q'_a \exp \left[ T_a \right] \mathrm{d} \mu_a}{\int_X \exp \left[ T_a \right] \mathrm{d} \mu_a} \, .
\end{equation}
Proceeding analogously with \cref{eq:VkAstDifferenceLowerBound}, we also have
\begin{equation}
\label{eq:DiniMinusLowerLowerBound}
D_- f_y \left( b \right)
\ge
\frac{1}{2} \frac{\int_X Q'_b \exp \left[ T_b \right] \mathrm{d} \mu_b}{\int_X \exp \left[ T_b \right] \mathrm{d} \mu_b} \, .
\end{equation}
It is easy to see that $f_y$ is monotonically increasing, such that applying \cref{eq:QDerivativeBound} and the Orlicz-Hölder inequality, $D^+ f_y \left( a \right)$ is finite.
By \cref{cor:LogNkAbsoluteContinuity,thm:VAstContinuity}, $f_y$ is also continuous such that \cref{lem:FiniteDiniImpliesAbsoluteContinuity} applies and $f_y$ is absolutely continuous.
Using \cref{cor:LogNkAbsoluteContinuity} again, $V^*(y)$ is also absolutely continuous.
Finally, it follows from \cite[p. 45, Theorem 4.1]{src:Bruckner:Differentiation} that $D_- f_y(a) = D^+ f_y(a)$ except for at most countably many $a \in [0, k]$.
Hence, \cref{eq:VAstDerivative} follows.
\end{proof}
\begin{remark}
While the differential equation itself is quite remarkable, the mere absolute continuity instead of true differentiability is somewhat disappointing, especially because for the measures considered in \cite{src:Ziebell:RigorousFRG}, the author obtained ordinary differentiability.
As mentioned in the above proof, with the achieved bounds on $D_- f_y(a)$ and $D^+ f_y(a)$, it follows from \cite[p. 45, Theorem 4.1]{src:Bruckner:Differentiation} that $D_- f_y(a)$ and $D^+ f_y(a)$ coincide except for at most countably many points.
This suggests that a stronger result may be possible, although it has proven difficult to obtain useful bounds on the other two Dini derivatives.
A perhaps more fruitful approach could be to show that $f_y$ is \emph{nonangular} (see \cite[p. 142]{src:Bruckner:Differentiation}) and then true differentiability of $f_y$ would follow.
\end{remark}
The differential equation is intimately tied to the \textbf{effective average action} given as
\begin{equation}
\Gamma_k : \mathcal{A}(\mu) \to \overline{\mathbb{R}}, y \mapsto V_k^* \left( y \right) - \frac{1}{2} \underline{I_k} \left( y - w, y - w \right) \, ,
\end{equation}
where $y - w \in \mathcal{H}(\mu_{-w})$ by \cref{thm:AffineKernelContainedInShiftedKernel}.
\begin{corollary}
Let $0 \le k \le l$ and $y \in \mathcal{A}(\mu)$ such that $M_l(y) \in \mathrm{int}\, \mathrm{dom}\, V_{k,l}$ and for all $a \in [0, k]$, let $T_a \in \mathcal{L}(\mu_l)$ as in \cref{cor:WklMinimisation}.
Then, $\Gamma (y) : [0, k] \to \mathbb{R}, t \mapsto \Gamma_t(y)$ is absolutely continuous and for almost every $t \in [0, k]$,
\begin{equation}
\frac{\mathrm{d}}{\mathrm{d} t} \Gamma_t \left( y \right)
=
\frac{1}{2}
\frac{\int_X Q'_t \exp \left[ T_t \right] \mathrm{d} \mu_t}{\int_X \exp \left[ T_t \right] \mathrm{d} \mu_t}
-
\frac{1}{2} Q'_t \left( y \right)
-
\frac{1}{2} \int_X Q_t' \mathrm{d} \mu_t \, .
\end{equation}
\end{corollary}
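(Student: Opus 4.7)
The plan is to reduce the statement to the preceding theorem. Setting
\[
F(t) := \underline{I_t}(y - w, y - w) \qquad \text{for } t \in [0, k],
\]
we have $\Gamma_t(y) = V_t^*(y) - \tfrac{1}{2} F(t)$. The preceding theorem already provides the absolute continuity of $V_t^*(y)$ together with its derivative, so the corollary reduces to showing that $F$ is absolutely continuous on $[0, k]$ with $F'(t) = Q_t'(y)$ almost everywhere. Subtracting $\tfrac{1}{2} Q_t'(y)$ from the derivative of $V_t^*(y)$ then produces exactly the formula claimed.

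Two properties of $F$ are immediate. First, by \cref{thm:InnerProductOrderDiagonal}, $F$ is monotonically increasing, since $(I_t)_{t \ge 0}$ is an increasing family in $\mathcal{I}(\mu_{-w})$ and $y - w \in \mathcal{H}(\mu_{-w})$. Second, condition (1) yields $D^+ F(t) = Q_t'(y) < \infty$ for every $t \in [0, k)$. From these two together, $F$ is automatically right-continuous: if $F(t_0^+) > F(t_0)$ held for some $t_0$, the difference quotient $(F(s) - F(t_0))/(s - t_0)$ would tend to $\infty$ as $s \searrow t_0$, contradicting the finiteness of $D^+ F(t_0)$.

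The main obstacle is establishing left-continuity of $F$, since condition (1) gives no direct control on $D_- F$. To handle this I would apply \cref{thm:BoundedIncreasingInnerProductsConverge} to an increasing sequence $(I_{t_n})_{n \in \mathbb{N}}$ with $t_n \nearrow t_0$; this produces a limit $f \in \mathcal{I}(\mu_{-w})$ with $f \le I_{t_0}$, and the key remaining step is the pointwise identification $\underline{f}(y - w, y - w) = \underline{I_{t_0}}(y - w, y - w)$ at the specific point $y - w \in \mathcal{H}(\mu_{-w})$. Here I would exploit that $y - w$ sits in the Lusin kernel and that standard representations of $f$ and $I_{t_0}$ can be chosen to coincide pointwise on a common standard $\mu_{-w}$-Lusin linear subspace containing $y - w$, in the spirit of \cref{cor:InnerProductWelldefinedOnLusinSubspace}.

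Once continuity of $F$ is secured, \cref{lem:FiniteDiniImpliesAbsoluteContinuity} yields its absolute continuity. Lebesgue's differentiation theorem for monotone functions then gives $F'(t) = D^+ F(t) = Q_t'(y)$ for almost every $t \in [0, k]$, and combining this with the derivative formula for $V_t^*(y)$ from the preceding theorem produces the claimed expression for $\tfrac{d}{dt} \Gamma_t(y)$. The absolute continuity of $\Gamma(y)$ follows because it is the difference of two absolutely continuous functions.
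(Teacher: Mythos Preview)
Your strategy coincides with the paper's: its entire proof is one sentence citing \cref{thm:InnerProductOrderDiagonal} (monotonicity of $F$), \cref{eq:QlPrimeOnKernelOfMuMinusW} (finiteness of $D^+F$), and \cref{lem:FiniteDiniImpliesAbsoluteContinuity}, which are exactly the three ingredients you isolate.

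You actually go further than the paper by observing that \cref{lem:FiniteDiniImpliesAbsoluteContinuity} requires \emph{continuity} of $F$ as a hypothesis, and that while right-continuity is forced by monotonicity together with $D^+F<\infty$, left-continuity does not follow from the cited facts alone; the paper glosses over this point. Your proposed route via \cref{thm:BoundedIncreasingInnerProductsConverge} applied to $(I_{t_n})_{n\in\mathbb N}$ with $t_n\nearrow t_0$ is sound, and the identification step you flag as incomplete can be closed as follows: by \cref{cor:QlPrimeAsLimitOfDifferenceQuotients} one has $\tilde I_{t_n}\to\tilde I_{t_0}$ in $\mu_{-w}$-measure, so the diagonal of the limit $f\in\mathcal I(\mu_{-w})$ produced by \cref{thm:BoundedIncreasingInnerProductsConverge} agrees with $\tilde I_{t_0}$ $\mu_{-w}$-almost everywhere; choosing standard representations and polarising on a common standard $\mu_{-w}$-Lusin linear subspace (in the spirit of \cref{lem:EqualityOfInnerProductRepresentations}) then gives $\underline f=\underline{I_{t_0}}$ on $\mathcal H(\mu_{-w})$, hence $F(t_n)\to F(t_0)$.
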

\begin{proof}
It suffices to show that $t \mapsto Q_t(y)$ is absolutely continuous.
However, that follows immediately from \cref{thm:InnerProductOrderDiagonal,eq:QlPrimeOnKernelOfMuMinusW,lem:FiniteDiniImpliesAbsoluteContinuity}.
\end{proof}
Finally, using \cref{eq:QlPrimeOnKernelOfMuMinusW}, we shall write the right-hand side in terms of \enquote{second derivatives} of $W_{k,l}$.
Before elaborating on what that means, we introduce the abbreviation
\begin{equation}
\mu_k^{T} = \frac{\exp [ T ]}{\int_X \exp [ T ] \mathrm{d} \mu_k} \mu_k
\end{equation}
for $T \in \mathrm{dom}\, \partial W_{k,l}'$ and simply define
\begin{equation}
D^2 W_{k,l} \left( T \right) \left( S, S \right)
=
\int_X S^2 \mathrm{d} \mu_k^T
-
\left( \int_X S \mathrm{d} \mu_k^T \right)^2
\in [0, \infty] \, ,
\end{equation}
for all $S \in \mathcal{K}(\mu_l)$.
Note that the subtrahend is finite by \cref{lem:WPrimeSubdifferentialRepresentation}.
\begin{remark}
\label{rem:SecondDerivativeDiscussion}
Evidently, if $W_{k,l}$ is twice Gâteaux differentiable at $T$, the above formula gives the correct second derivative.
However, if $T \in \mathrm{dom}\, \partial W_{k,l}' \setminus \mathrm{int}\, \mathrm{dom}\, W_{k,l}'$, Gâteaux differentiability clearly fails.
Currently, the \enquote{second-order slice derivatives} considered in \cite{src:MohammadSoueycatt:SecondOrderSliceDerivative} appear to be the most general second-order derivatives of convex functions on Banach spaces that play well with convex conjugation.
However, it seems to the author that to obtain the correct definition of second-order derivative in this setting, one needs to take a \enquote{regularising limit} $T' \to T$ in an appropriate fashion, similar in spirit to the \enquote{limiting subdifferential} respectively the second-order subdifferentials considered in \cite{src:Mordukhovich:SecondOrder}.
However, in view of the asymmetric directional differentiability obtained in \cref{cor:ConjugateDiffability}, one may need to consider directional derivatives along a cone instead of along a vector space.
Finally, the resulting object $D^2 W_{k,l} ( T )$ needs to be considered in a very weak sense, because it is certainly possible that $D^2 W_{k,l} ( T ) ( S, S ) = \infty$ for some $S \in \mathcal{K}(\mu_l)$
The author is currently not aware of the existence of a suitable framework of second derivatives of convex functions that would make $D^2 W_{k,l}$ a derivative in a suitable sense.
\end{remark}
Clearly, $D^2 W_{k,l} \left( T \right)$ polarises to an inner product on the vector subspace $\mathcal{K}(\mu_l)^T$ of all $S$ for which the above is finite.
Moreover, $D^2 W_{k,l} \left( T \right)$ immediately complexifies to a sesquilinear inner product on $\mathcal{K}(\mu_l)^T_{\mathbb{C}}$ in the obvious way.
\begin{lemma}
\label{lem:QAndSecondWDerivative}
Let $0 \le k \le l$ and $y \in \mathcal{A}(\mu)$ such that $M_l(y) \in \mathrm{int}\, \mathrm{dom}\, V_{k,l}$ and $T_k \in \mathcal{L}(\mu_l)$ as in \cref{cor:WklMinimisation}.
For all $a \in A_k$, write $\omega^k_a = \phi^k_a + i \psi^k_a$ for some $\phi^k_a, \psi^k_a \in X^*$,
\begin{equation}
E_a = [ \phi^k_a - m_l( \phi^k_a ) ]_{\mu_l} \in \mathcal{K}(\mu_l)
\qquad\text{and}\qquad
F_a = [ \psi^k_a - m_l( \psi^k_a ) ]_{\mu_l} \in \mathcal{K}(\mu_l) \, ,
\end{equation}
as well as $U_a = E_a + i F_a$.
Then, $U_a \in \mathcal{K}(\mu_l)^{T_k}_{\mathbb{C}}$ for $\kappa_k$-almost every $a \in A_k$ and
\begin{equation}
\frac{1}{2} \int_X Q'_k \mathrm{d} \mu_k^{T_k}
-
\frac{1}{2} Q'_k \left( y \right)
=
\int_{A_k} D^2 W_{k,l}' \left( T_k \right) \left( U_a, U_a \right) \mathrm{d} \kappa_k \left( a \right) \, .
\end{equation}
\end{lemma}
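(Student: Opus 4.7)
The plan is to expand $D^2 W_{k,l}'(T_k)(U_a, U_a)$ pointwise in $a \in A_k$, identify it with the difference $\int_X |\omega^k_a(x-w)|^2 \,\mathrm{d}\mu_k^{T_k}(x) - |\omega^k_a(y-w)|^2$, and then invoke Fubini to integrate over $A_k$ and recover $Q'_k$ on both sides.

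First I would use the sesquilinearity of the complexification to reduce $D^2 W_{k,l}'(T_k)(U_a, U_a)$ to $D^2 W_{k,l}'(T_k)(E_a, E_a) + D^2 W_{k,l}'(T_k)(F_a, F_a)$, and then unpack each summand using the variance formula defining $D^2 W_{k,l}'$. The key algebraic input is \cref{thm:MlyRepresentation}, which gives $\int_X E_a \,\mathrm{d}\mu_k^{T_k} = M_l(y)(E_a) = (y - m_l)(\phi^k_a)$ and hence $\int_X \phi^k_a \,\mathrm{d}\mu_k^{T_k} = \phi^k_a(y)$. Decomposing $\phi^k_a(x) = \phi^k_a(x-w) + \phi^k_a(w)$ and $\phi^k_a(y) = \phi^k_a(y-w) + \phi^k_a(w)$, the additive constants $m_l(\phi^k_a)$ and $\phi^k_a(w)$ cancel between $\int E_a^2 \,\mathrm{d}\mu_k^{T_k}$ and $(\int E_a \,\mathrm{d}\mu_k^{T_k})^2$, leaving only terms in $\phi^k_a(x-w)$ and $\phi^k_a(y-w)$. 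Performing the analogous computation for $\psi^k_a$ and summing produces, up to the appropriate normalisation, the pointwise identity
\[
D^2 W_{k,l}'(T_k)(U_a, U_a) = \int_X \left| \omega^k_a(x-w) \right|^2 \,\mathrm{d}\mu_k^{T_k}(x) - \left| \omega^k_a(y-w) \right|^2.
\]

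The next step is to justify two applications of Fubini: one to show that $U_a \in \mathcal{K}(\mu_l)^{T_k}_{\mathbb{C}}$ for $\kappa_k$-almost every $a$, and one to integrate the above identity over $A_k$. Both reduce to establishing $\int_X Q'_k \,\mathrm{d}\mu_k^{T_k} < \infty$. For this I would reuse the Orlicz-Hölder setup of \cref{lem:WPrimeSubdifferentialRepresentation}: the factor $N^k_\Psi(\exp[T_k])$ is already shown there to be finite via the subdifferential bound on $\alpha(T_k)$, while $N^k_\Phi(Q'_k)$ is controlled via hypothesis \cref{eq:QDerivativeBound}, by rewriting $\int \exp[Q'_k/(2R) - Q_k/2] \,\mathrm{d}\mu < \infty$ as a statement about $\mu_k$ after dividing by $N_k$.

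Once finiteness is secured, Fubini in its non-negative form allows swapping the order of integration in the first term, giving $\int_X Q'_k \,\mathrm{d}\mu_k^{T_k}$; the second term is $Q'_k(y)$ directly from the definition of $Q'_k$ together with hypothesis (1). The main obstacle is the Orlicz-Hölder integrability estimate: assembling the correct combination of \cref{eq:QDerivativeBound} and the $N^k_\Psi$ bound for $T_k$ to force $\int_X Q'_k \,\mathrm{d}\mu_k^{T_k} < \infty$ is the single point where the strong compatibility assumptions on the family $(I_k)_{k \ge 0}$ genuinely enter the argument; the rest of the proof is a purely algebraic cancellation of constants followed by a measure-theoretic interchange.
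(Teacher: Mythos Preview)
Your proposal is correct and follows essentially the same route as the paper. Both arguments reduce to the pointwise variance identity obtained by centring $\phi^k_a$ (the paper subtracts $m_l(\phi^k_a)$ and expands with the binomial formula; you instead observe the variance is translation-invariant and shift by $w$), then invoke \cref{thm:MlyRepresentation} to identify $\int_X \phi^k_a\,\mathrm d\mu_k^{T_k}$ with $\phi^k_a(y)$, and finally apply Fubini over $A_k$. Your treatment is in fact more explicit than the paper's on the one nontrivial point: the paper simply writes ``Fubini's theorem'', whereas you correctly isolate the required finiteness $\int_X Q'_k\,\mathrm d\mu_k^{T_k}<\infty$ and supply it via the Orlicz--H\"older pairing of $N^k_\Phi(Q'_k)$ (finite by \eqref{eq:QDerivativeBound}) against $N^k_\Psi(\exp[T_k])$ (finite by the subdifferential estimate in \cref{lem:WPrimeSubdifferentialRepresentation}, applicable since $M_l(y)\in\partial W'_{k,l}(T_k)$ by \cref{thm:MlyRepresentation}).
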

\begin{proof}
First, note that for all $x \in X$,
\begin{equation}
Q_k' \left( x \right)
=
\int_{A_k} \left[ \phi^k_a \left( x - w \right)^2 + \psi^k_a \left( x - w \right)^2 \right] \mathrm{d} \kappa_k \left( a \right) \, .
\end{equation}
Focusing on just the first summand, we obtain,
\begin{equation}
\phi^k_a \left( x - w \right)^2
=
\left[ \left( \phi^k_a \left( x \right) - \int_X \phi^k_a \mathrm{d} \mu_l \right)
-
\left( \phi^k_a \left( w \right) - \int_X \phi^k_a \mathrm{d} \mu_l \right) \right]^2 \, .
\end{equation}
Obviously, the analogous statement holds for $\psi^k_a$ as well.
Hence, expanding via the binomial formula and using \cref{eq:MlyRepresentation}, the claim follows after some simple algebraic manipulations and Fubini's theorem.
\end{proof}
\begin{corollary}[Wetterich's Equation]
\label{cor:Wetterich}
Let $0 \le k \le l$ and $y \in \mathcal{A}(\mu)$ such that $M_l(y) \in \mathrm{int}\, \mathrm{dom}\, V_{k,l}^*$ and for all $t \in [0, k]$, let $T_t \in \mathcal{L}(\mu_l)$ as in \cref{cor:WklMinimisation}.
For almost all $t \in [0, k]$,
\begin{equation}
\frac{\mathrm{d}}{\mathrm{d} t} \Gamma_t \left( y \right)
=
\int_{A_k}
D^2 W_{t,l} \left( T_t \right) \left( U_a, U_a \right)
\mathrm{d} \kappa_k \left( a \right)
-
\frac{1}{2} \int_X Q_t' \mathrm{d} \mu_t \, .
\end{equation}
\end{corollary}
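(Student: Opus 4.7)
The plan is to combine two ingredients already in hand: the previous Corollary, which gives the absolute continuity of $t \mapsto \Gamma_t(y)$ together with an explicit formula for $\frac{\mathrm{d}}{\mathrm{d}t}\Gamma_t(y)$ valid almost everywhere, and \cref{lem:QAndSecondWDerivative}, which rewrites the $Q'_t$-contributions as a $\kappa$-integral of second derivatives of $W_{t,l}$. First I recall from the previous Corollary that for almost all $t \in [0,k]$,
\begin{equation*}
\frac{\mathrm{d}}{\mathrm{d}t}\Gamma_t(y) = \frac{1}{2}\frac{\int_X Q'_t \exp[T_t]\,\mathrm{d}\mu_t}{\int_X \exp[T_t]\,\mathrm{d}\mu_t} - \frac{1}{2} Q'_t(y) - \frac{1}{2} \int_X Q'_t\,\mathrm{d}\mu_t,
\end{equation*}
and observe that the first quotient is exactly $\int_X Q'_t\,\mathrm{d}\mu_t^{T_t}$ by the definition of the tilted measure $\mu_t^{T_t}$ introduced just before \cref{lem:QAndSecondWDerivative}.

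Next I apply \cref{lem:QAndSecondWDerivative} pointwise at each $t \in [0,k]$ (in place of $k$), which identifies
\begin{equation*}
\frac{1}{2}\int_X Q'_t\,\mathrm{d}\mu_t^{T_t} - \frac{1}{2} Q'_t(y) = \int_{A_t} D^2 W_{t,l}(T_t)(U_a,U_a)\,\mathrm{d}\kappa_t(a),
\end{equation*}
where the $U_a$ are built from the decomposition $\omega^t_a = \phi^t_a + i\psi^t_a$ as in the Lemma. Substituting this identity into the expression above yields the claimed formula. The two hypotheses of \cref{lem:QAndSecondWDerivative} that must be checked at every $t$ rather than only at $k$ are that $M_l(y) \in \mathrm{int}\,\mathrm{dom}\,V_{t,l}^{*}$ and that $T_t$ is a minimiser of the type produced in \cref{cor:WklMinimisation}; both propagate down from $k$ thanks to \cref{lem:EquiInterior}, which is precisely why that lemma was phrased uniformly over $a \in [0,k]$.

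The main obstacle here is essentially bookkeeping rather than analysis: verifying that the interior condition and the existence of suitable minimisers are available uniformly on the whole interval $[0,k]$, and that the pointwise application of \cref{lem:QAndSecondWDerivative} at each $t$ matches the previous Corollary's derivative formula term by term. All serious technical work, namely the absolute continuity of $V^*(y)$, the integral representation in \cref{thm:MlyRepresentation}, the Orlicz--Hölder estimates underlying differentiation of $V_k^*$, and the polarisation rewriting $Q'_k$ in terms of inner products on $\mathcal{K}(\mu_l)^{T_k}_{\mathbb{C}}$, has already been carried out, so the present Corollary reduces to a cosmetic rearrangement of the previous Corollary using \cref{lem:QAndSecondWDerivative}.
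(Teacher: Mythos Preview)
Your proof is correct and matches the paper's approach exactly: the Corollary is stated without proof in the paper, being an immediate consequence of the preceding Corollary on $\frac{\mathrm{d}}{\mathrm{d}t}\Gamma_t(y)$ together with \cref{lem:QAndSecondWDerivative} applied at each $t\in[0,k]$, with \cref{lem:EquiInterior} supplying the interiority condition uniformly on $[0,k]$ just as you note. The only discrepancy is notational: your formula carries $(A_t,\kappa_t)$ while the stated Corollary has $(A_k,\kappa_k)$, but since $Q_t'$ is defined through the family $(\omega^t_a)_{a\in A_t}$, your indexing is the mathematically correct one and the paper's appears to be a typographical slip.
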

The missing step is now to relate $D^2 W_{t,l} ( T_t )$ to a suitable second derivative $D^2 W_{k,l}^*(M_l(y))$.
Then, Wetterich's equation, as given in \cite{src:Ziebell:RigorousFRG}, would follow.
As discussed in \cref{rem:SecondDerivativeDiscussion}, the author is at the moment not aware a way to establish this connection.
Fortunately, it should be noted that even though \cref{cor:ConjugateDiffability} just shows first-order directional differentiability of $W_{t,l}^*$ along a cone, this cone is actually very large as is shown in \cref{cor:SpanOfPositiveCone}.
Hence, the problem does not appear hopeless to the author.
This point is especially important, since one would want to be able to compute $D^2 W_{t,l} ( T_t ) ( U_a, U_a )$ from $\Gamma_t, Q_t$ and $M_l(y)$ alone - that is by just looking at $W_{t,l}^*$ restricted to $\mathrm{ran}\, M_l$ which is an affine subspace of $(\mathcal{L}(\mu_l), \overline{\tau_l})^*$.
With that regard, it may be useful to demand $w \in \mathcal{A}(\mu)$ as well.
Then $\mathcal{A}(\mu)$ can naturally be identified with a vector space, by defining $w$ to be the origin.
This would enable the common method of expanding the flow equation in a formal power series, thus bridging the gap to the usual theoretical physics setup.
\subsection{The Boundary Values}
\label{sec:BoundaryConditions}
Keeping the definitions from the last subsection, let us now look at boundary values of $\Gamma_k(y)$, that is $\Gamma_0(y)$ and $\lim_{k \to \infty} \Gamma_k(y)$.
In order to obtain meaningful results, we shall make the following assumptions which further clarify the role of $w$:
\begin{enumerate}
\setcounter{enumi}{3}
\item $I_0 = 0$ $\mu_{-w}$-almost everywhere.
\item\label{itm:JMonotonicity} There is a family $(r_k)_{k \ge 0}$ of positive real numbers with $\lim_{k \to \infty} r_k = \infty$ such that for every $k \ge 0$, there is some $K \ge 0$ such that for all $l \ge K$, $J_k \le J_l$, where $J_k = I_k / r_k^2$.
\item\label{itm:JBoundedness} For every sequence $k_n \nearrow \infty$ in $[0, \infty)$, $\sup_{n \in \mathbb{N}} \tilde{J}_{k_n}(x)$ is bounded $\mu_{-w}$-almost everywhere.
\item\label{itm:wInPSupport} For all $\epsilon > 0$,
\begin{equation}
\inf_{k \ge 0} \mu_{-w} \left( K^{J_k}_\epsilon(0) \right) > 0 \, .
\end{equation}
\end{enumerate}
\begin{corollary}
\label{cor:JConvergenceAndBallConvergence}
There is a $J \in \mathcal{I}(\mu_{-w})$ such that for every sequence $k_n \nearrow \infty$ in $[0, \infty)$ and $J_{k_n}$ converges to $J$.
Moreover, for all $\epsilon > 0$ and all $h \in \mathcal{H}(\mu_{-w})$,
\begin{equation}
\mu_{-w} \left( K^J_\epsilon(0) \right)
=
\inf_{k \ge 0} \mu_{-w} \left( K^{J_k}_\epsilon(0) \right)
=
\lim_{k \to \infty} \mu_{-w} \left( K^{J_k}_\epsilon(0) \right) \, .
\end{equation}
\end{corollary}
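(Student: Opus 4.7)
The plan is to reduce the claim to \cref{thm:BoundedIncreasingInnerProductsConverge} by extracting, from any given sequence $k_n \nearrow \infty$, a subsequence along which $(J_{k_n})$ is increasing in $\mathcal{I}(\mu_{-w})$. Concretely, I recursively pick indices $n_1 < n_2 < \cdots$ by setting $n_1 = 1$ and, given $n_m$, applying the eventual-monotonicity assumption~\ref{itm:JMonotonicity} to $k_{n_m}$ to obtain a threshold $K \ge 0$, then choosing $n_{m+1} > n_m$ with $k_{n_{m+1}} \ge K$. The boundedness assumption~\ref{itm:JBoundedness} ensures $\sup_m \tilde{J}_{k_{n_m}}$ is finite $\mu_{-w}$-almost everywhere, and \cref{thm:BoundedIncreasingInnerProductsConverge} then produces a $J \in \mathcal{I}(\mu_{-w})$ such that $J_{k_{n_m}}$ converges to $J$ in $\mu_{-w}^2$-measure, with $\tilde{J}(x) = \sup_m \tilde{J}_{k_{n_m}}(x)$ $\mu_{-w}$-almost everywhere together with the corresponding ball-measure formula of that theorem.

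The key step is to show that $J$ depends on neither the sequence nor the extracted subsequence. Supposing a second such construction produces $J'$, assumption~\ref{itm:JMonotonicity} combined with \cref{thm:InnerProductOrderDiagonal} gives, for every $k \ge 0$, $\tilde{J}_k \le \tilde{J}_l$ $\mu_{-w}$-almost everywhere for all sufficiently large $l$. Inserting the terms of the first increasing subsequence and dominating them by eventually larger terms of the second yields $\tilde{J} \le \tilde{J}'$ $\mu_{-w}$-almost everywhere; symmetrically, $\tilde{J}' \le \tilde{J}$. To lift this diagonal equality to equality in $\mathcal{I}(\mu_{-w})$, I pick standard representations of $J$ and $J'$, pass to a common $\mu_{-w}$-Lusin linear subspace following the constructions underlying \cref{lem:EqualityOfInnerProductRepresentations,cor:InnerProductWelldefinedOnLusinSubspace}, and polarise. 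This is the main obstacle: the family $(J_k)_{k \ge 0}$ is indexed by the uncountable set $[0,\infty)$ and every pairwise inequality only holds on a conull set depending on the pair, so the polarisation has to be carried out pointwise on the chosen standard subspace rather than $\mu_{-w}^2$-almost everywhere.

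Once $J$ is intrinsic, the full convergence $J_{k_n} \to J$ in $\mu_{-w}^2$-measure for every $k_n \nearrow \infty$ follows from the subsequence principle in the metrisable space $L^0(\mu_{-w}^2)$: every subsequence of $(k_n)$ still tends to $\infty$ and therefore admits, by the same extraction, a further subsequence converging to $J$. For the ball-measure formula, assumption~\ref{itm:JMonotonicity} implies that $K^{J_l}_\epsilon(0) \subseteq K^{J_k}_\epsilon(0)$ $\mu_{-w}$-almost everywhere whenever $l$ is at least the threshold associated to $k$, so $k \mapsto \mu_{-w}(K^{J_k}_\epsilon(0))$ is eventually non-increasing and hence convergent, with $\lim_{k \to \infty} \mu_{-w}(K^{J_k}_\epsilon(0)) = \inf_{k \ge 0} \mu_{-w}(K^{J_k}_\epsilon(0))$. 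Comparing this full limit with the limit along a chosen increasing subsequence, which equals $\mu_{-w}(K^J_\epsilon(0))$ by \cref{thm:BoundedIncreasingInnerProductsConverge}, closes the argument.
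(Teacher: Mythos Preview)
Your approach is essentially what the paper's one-line proof intends; you correctly identify that assumption~\ref{itm:JMonotonicity} only gives eventual domination rather than a monotone family, so an extraction is needed before \cref{thm:BoundedIncreasingInnerProductsConverge} applies. Two points deserve attention, though.

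First, your uniqueness argument for $J$ has a gap. You reduce to $\tilde J = \tilde J'$ $\mu_{-w}$-almost everywhere and then propose to polarise on a common standard subspace. But \cref{lem:EqualityOfInnerProductRepresentations} and \cref{cor:InnerProductWelldefinedOnLusinSubspace} start from $\mu_{-w}^2$-almost-everywhere agreement, not diagonal agreement, and the implication from the latter to the former is precisely the open converse to \cref{thm:InnerProductOrderDiagonal} that the paper flags as unresolved at the end of \cref{sec:MeasurableInnerProducts}. Concretely, on a compact disk $K_n$ the difference $B(x,x) - B'(x,x)$ is a continuous quadratic form vanishing $\mu_{-w}$-almost everywhere, but nothing prevents a nonempty relatively open subset of $K_n$ from being $\mu_{-w}$-null, so you cannot conclude it vanishes identically, and without that the polarisation identity is unavailable for off-diagonal pairs. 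A clean fix is to \emph{interleave}: given increasing extracted subsequences $(J_{a_m})$ and $(J_{b_m})$, use assumption~\ref{itm:JMonotonicity} recursively to build a single increasing sequence $(J_{c_n})$ whose odd terms are taken from $(a_m)$ and whose even terms are taken from $(b_m)$. Then \cref{thm:BoundedIncreasingInnerProductsConverge} yields a limit $J''$, and since $(J_{c_n})$ contains subsequences of both $(J_{a_m})$ and $(J_{b_m})$, the subsequence principle in the metrisable space $L^0(\mu_{-w}^2)$ forces $J = J'' = J'$.

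Second, the claim that $k \mapsto \mu_{-w}(K^{J_k}_\epsilon(0))$ is \emph{eventually non-increasing} is not supported by assumption~\ref{itm:JMonotonicity}: that assumption dominates each fixed $J_k$ by all sufficiently late $J_l$, but says nothing about the comparison of $J_{l_1}$ and $J_{l_2}$ for two large indices. The desired identity $\lim_{k\to\infty} = \inf_{k\ge 0}$ nevertheless follows directly: given $\delta > 0$, pick $k_0$ with $\mu_{-w}(K^{J_{k_0}}_\epsilon(0)) < \inf_{k \ge 0}\mu_{-w}(K^{J_k}_\epsilon(0)) + \delta$, take the threshold $K$ for $k_0$, and observe that $\mu_{-w}(K^{J_l}_\epsilon(0)) \le \mu_{-w}(K^{J_{k_0}}_\epsilon(0))$ for all $l \ge K$ by \cref{thm:InnerProductOrderDiagonal}.
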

\begin{proof}
This follows immediately from \cref{itm:JMonotonicity,itm:JMonotonicity,thm:BoundedIncreasingInnerProductsConverge}.
\end{proof}
The following \namecref{lem:ExpMinusGamma} sheds light on what $\Gamma_k$ actually encodes.
\begin{lemma}
\label{lem:ExpMinusGamma}
Let $0 \le k$ and $y \in \mathcal{A}(\mu)$ such that $V_k^*(y) < \infty$.
Then,
\begin{equation}
\label{eq:ExpMinusGamma}
\exp \left[ - \Gamma_k \left( y \right) \right]
=
\inf_{\phi \in X^*} \frac{1}{N_k} \int_X \exp \left[ \phi \left( x \right) - \frac{1}{2} \tilde{I}_k \left( x \right)
\right] \mathrm{d} \mu_{-y}\left( x \right) \, .
\end{equation}
\end{lemma}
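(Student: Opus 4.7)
The plan is to unfold $\exp[-\Gamma_k(y)]$ through its definitions, rewrite $\tilde{I}_k(x - y)$ by bilinearity of a standard representation of $I_k$, perform a change of variable, and reduce the claim to the equality of two infima over $X^*$ differing only by a measurable linear functional; the latter is then settled by approximation with continuous linear functionals. Writing $r := \underline{I_k}(y - w, y - w) \in \mathbb{R}$, fixing a standard representation $(Y, B)$ of $I_k \in \mathcal{I}(\mu_{-w})$ afforded by \cref{thm:InnerProductRepresentation}, and using that $y \in \mathcal{A}(\mu) \subseteq X$ so that $\phi(y) \in \mathbb{R}$ for all $\phi \in X^*$, inverting the supremum in $V_k^*(y)$ yields
\begin{equation}
\exp[-\Gamma_k(y)] = e^{r/2} \inf_{\phi \in X^*} \frac{1}{N_k} \int_X \exp\bigl[\phi(x) - \phi(y) - \tfrac{1}{2} Q_k(x)\bigr] \mathrm{d}\mu(x).
\end{equation}

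Bilinearity of $B$ applied to $u := x - w$ and $h := y - w \in Y$ gives for $\mu$-almost every $x$
\begin{equation}
\tilde{I}_k(x - y) = Q_k(x) - 2\beta(x) + r, \qquad \beta(x) := B(x - w, h),
\end{equation}
so that substituting this into \cref{eq:ExpMinusGamma} and performing the change of variable $u = x - y$ (legitimate since $y \in X$) turns the right-hand side into $e^{-r/2}N_k^{-1}\inf_\phi \int e^{\phi(x) - \phi(y) + \beta(x) - Q_k(x)/2}\,\mathrm{d}\mu$. Cancelling common factors, the claim collapses to
\begin{equation}
\inf_{\phi \in X^*} \int_X e^{\phi + \beta - Q_k/2 - \phi(y)} \mathrm{d}\mu = e^r \inf_{\phi \in X^*} \int_X e^{\phi - Q_k/2 - \phi(y)} \mathrm{d}\mu,
\end{equation}
which is the formal consequence of shifting $\phi \mapsto \phi + \beta$, the factor $e^r$ arising from $\beta(y) = B(h,h) = r$.

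To justify this shift I would approximate $\beta$ by $\alpha_n(x) := B_n(x - w, h)$, where the $B_n$ are the continuous symmetric positive semidefinite bilinear forms from \cref{def:MuRegularInnerProduct} approximating $I_k$. Each $\alpha_n$ differs from $B_n(\cdot, h) \in X^*$ only by the constant $-B_n(w, h)$, which is immaterial for the infima since $\phi(y) - V_k(\phi)$ is invariant under additive constants in $\phi$. Polarisation applied to the pointwise convergence $B_n \to B$ on $Y \times Y$ from \cref{thm:InnerProductRepresentation} yields $\alpha_n \to \beta$ pointwise $\mu$-almost everywhere and $\alpha_n(y) = B_n(h, h) \to r$. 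The inequality $\ge$ is obtained by testing the right-hand infimum against $\phi + \alpha_n \in X^* + \mathbb{R}$, giving
\begin{equation}
\inf_{\phi' \in X^*} \int e^{\phi' - Q_k/2 - \phi'(y)} \mathrm{d}\mu \le e^{-\alpha_n(y)} \int e^{\phi + \alpha_n - Q_k/2 - \phi(y)} \mathrm{d}\mu,
\end{equation}
and passing to the limit $n \to \infty$; the opposite inequality $\le$ follows symmetrically by testing the left-hand infimum against $\phi - \alpha_n$. Both directions thus reduce to showing
\begin{equation}
\int_X e^{\phi + \varepsilon_n - Q_k/2 - \phi(y)} \mathrm{d}\mu \xrightarrow{n \to \infty} \int_X e^{\phi - Q_k/2 - \phi(y)} \mathrm{d}\mu
\end{equation}
for $\varepsilon_n \in \{\alpha_n - \beta,\ \beta - \alpha_n\}$, each tending to zero $\mu$-almost everywhere.

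The main obstacle is justifying this limit interchange, as $\varepsilon_n$ appears inside an exponential. I would apply Vitali's convergence theorem after establishing uniform integrability of $(e^{p\varepsilon_n})_n$ against the finite measure $e^{\phi - Q_k/2}\,\mathrm{d}\mu$ for some $p > 1$. The Cauchy--Schwarz inequality for the positive semidefinite forms $B_n$ gives
\begin{equation}
|\alpha_n(x)|^2 \le B_n(x - w, x - w)\, B_n(h, h) \le C^2 Q_k(x)
\end{equation}
for a constant $C$ independent of $n$, using that $B_n(x - w, x - w) \nearrow Q_k(x)$ and $B_n(h, h) \to r$, together with the analogous bound $|\beta(x)|^2 \le C^2 Q_k(x)$. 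Completing the square yields $2pC\sqrt{Q_k(x)} - \tfrac{1}{2} Q_k(x) \le 2 p^2 C^2$, hence
\begin{equation}
e^{p\varepsilon_n + \phi - Q_k/2} \le e^{2 p^2 C^2}\, e^\phi \in L^1(\mu)
\end{equation}
uniformly in $n$ by the standing assumption $\exp[\phi] \in L^1(\mu)$. Uniform integrability and thus the required convergence follow, completing the reduction.
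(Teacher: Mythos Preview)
Your argument is correct and takes a genuinely different route from the paper's. The paper approximates the \emph{entire} functional $V_k$ by $V_k^n$ built from the continuous $B_n$, performs the shift at level $n$ (where $B_n(\cdot,h)\in X^*$ makes it trivial), and then passes to the limit in two places: it invokes an argument ``analogous to \cref{thm:VAstContinuity}'' to get $(V_k^n)^*(y)\to V_k^*(y)$, and for the reverse inequality it appeals to $\Gamma$-convergence results from Dal~Maso. You instead keep $V_k^*$ and $Q_k$ fixed and approximate only the measurable linear functional $\beta=B(\cdot-w,h)$ by the continuous $\alpha_n=B_n(\cdot-w,h)$, reducing everything to a single dominated-convergence step. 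The key insight that makes your route work is the Cauchy--Schwarz bound $|\alpha_n|,|\beta|\le C\sqrt{Q_k}$ combined with completing the square, which yields a uniform $L^1(\mu)$ dominant $e^{\text{const}}e^\phi$; this is more elementary and self-contained than the paper's $\Gamma$-convergence machinery, and exploits the quadratic structure of $I_k$ directly.

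Two minor points: your ``both directions reduce to'' statement is slightly miswritten for the $\ge$ direction, where one actually needs $\int e^{\phi+\alpha_n-Q_k/2}\to\int e^{\phi+\beta-Q_k/2}$ rather than $\int e^{\phi+\varepsilon_n-Q_k/2}\to\int e^{\phi-Q_k/2}$; but the same dominant applies since $|\alpha_n|\le C\sqrt{Q_k}$, so this is harmless. Also, Vitali with $p>1$ is overkill---your bound already gives a single integrable majorant, so plain dominated convergence suffices.
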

\begin{proof}
First, note that $y - w \in \mathcal{H}(\mu_{-w})$, by \cref{thm:AffineKernelContainedInShiftedKernel}.
Hence, \cref{prop:TranslationOfInnerProduct} applies and $I_k$ corresponds uniquely to an element in $\mathcal{I}(\mu_{-y})$ which we (somewhat sloppily) also denote by $I_k$.
Hence, $\tilde{I}_k$ is interpreted according to $\mathcal{I}(\mu_{-y})$.

Let $(B_n)_{n \in \mathbb{N}}$ be sequence as in \cref{def:MuRegularInnerProduct} that converges to $I_k$ in $\mu_{-w}^2$-measure and define
\begin{equation}
\label{eq:VknDefinition}
V_k^n : X^* \to \mathbb{R},
\phi
\mapsto
\ln \frac{\int_X \exp \left[ \phi \left( x \right)- \frac{1}{2} B_n \left( x - w, x - w \right) \right] \mathrm{d} \mu \left( x \right)}{\int_X \exp \left[ - \frac{1}{2} B_n \left( x - w, x - w \right) \right] \mathrm{d} \mu \left( x \right)} \, .
\end{equation}
Using an argument analogous to the one given in the proof of \cref{thm:VAstContinuity}, we then see that $\lim_{n \to \infty} (V_k^n)^*(y) = V_k^*(y)$.
Hence, writing $N_k^n$ for the denominator in \cref{eq:VknDefinition}, we have
\begin{equation}
\begin{aligned}
&\exp \left[ V_k^n \left( \phi \right) - y \left( \phi \right)  + \frac{1}{2} B_n \left( y - w, y - w \right) \right] \\
&=
\frac{1}{N_k^n} \int_X \exp \left[ \phi \left( x - y \right) - \frac{1}{2} B_n \left( x - w, x - w \right) + \frac{1}{2} B_n \left( y - w, y - w \right) \right] \mathrm{d} \mu\left( x \right) \\
&=
\frac{1}{N_k^n} \int_X \exp \left[ \phi \left( x \right)- \frac{1}{2} B_n \left( x + y - w, x + y - w \right)
+
\frac{1}{2} B_n \left( y - w, y - w \right) \right] \mathrm{d} \mu_{-y}\left( x \right) \\
&=
\frac{1}{N_k^n} \int_X \exp \bigg[ \phi \left( x \right) - B_n \left( y - w, x \right) - \frac{1}{2} B_n \left( x, x \right)
\bigg] \mathrm{d} \mu_{-y}\left( x \right) \, ,
\end{aligned}
\end{equation}
for all $\phi \in X^*$.
Because $B_n$ is continuous, it follows that
\begin{equation}
\exp \left[ - (V_k^n)^* \left( y \right) + \frac{1}{2} B_n \left( y - w, y - w \right) \right]
=
\inf_{\phi \in X^*}
\frac{1}{N_k^n} \int_X \exp \bigg[ \phi \left( x \right) - \frac{1}{2} B_n \left( x, x \right)
\bigg] \mathrm{d} \mu_{-y}\left( x \right) \, .
\end{equation}
Now, since $B_n(y-w, y-w)$ converges to $\underline{I_k}(y-w, y-w)$, we obtain
\begin{equation}
\begin{aligned}
\exp \left[ - \Gamma_k \left( y \right) \right]
&=
\lim_{n \to \infty} \exp \left[ - (V_k^n)^* \left( y \right) + \frac{1}{2} B_n \left( y - w, y - w \right) \right] \\
&=
\lim_{n \to \infty} 
\frac{1}{N_k^n}
\inf_{\phi \in X^*}
\int_X \exp \bigg[ \phi \left( x \right) - \frac{1}{2} B_n \left( x, x \right)
\bigg] \mathrm{d} \mu_{-y} \left( x \right) \\
&\ge
\frac{1}{N_k}
\inf_{\phi \in X^*}
\int_X \exp \bigg[ \phi \left( x \right) - \frac{1}{2} \tilde{I}_k \left( x \right)
\bigg] \mathrm{d} \mu_{-y} \left( x \right) \, .
\end{aligned}
\end{equation}
For the converse inequality, note that by the dominated convergence theorem,
\begin{equation}
\begin{aligned}
\lim_{n \to \infty} f_n \left( \phi \right)
&:=
\lim_{n \to \infty}
\int_X \exp \bigg[ \phi \left( x \right) - \frac{1}{2} B_n \left( x, x \right)
\bigg] \mathrm{d} \mu_{-y} \left( x \right) \\
&=
\int_X \exp \bigg[ \phi \left( x \right) - \frac{1}{2} \tilde{I}_k \left( x \right)
\bigg] \mathrm{d} \mu_{-y} \left( x \right)
=:
f \left( \phi \right) \, ,
\end{aligned}
\end{equation}
for all $\phi \in X^*$.
Furthermore, equipping $X^*$ with the discrete topology, it is obvious that $f$ is lower semicontinuous.
Hence, it follows from \cite[Proposition 5.7]{src:DalMaso:GammaConvergence}, that $f_n$ $\Gamma$-converges to $f$ and thus from \cite[Proposition 7.1]{src:DalMaso:GammaConvergence}, that
\begin{equation}
\inf_{\phi \in X^*} f \left( \phi \right)
\ge
\liminf_{n \to \infty} \inf_{\phi \in X^*} f_n \left( \phi \right) \, .
\end{equation}
\end{proof}
If one could remove the $\phi$ from \cref{eq:ExpMinusGamma}, the right-hand side would roughly encode the $\mu_{-y}$-measure of a $K^{I_k}$-ball around zero.
With that in mind, we call $y \in \mathcal{A}(\mu)$ \textbf{admissible}, if
\begin{equation}
\label{eq:AdmissibleDefinition}
\lim_{k \to \infty} \inf_{\phi \in X^*} \frac{\int_X \exp \left[ \phi \left( x \right) - \frac{1}{2} \tilde{I}_k \left( x \right) \right] \mathrm{d} \mu_{-y} \left( x \right)}{\int_X \exp \left[ - \frac{1}{2} \tilde{I}_k \left( x \right) \right] \mathrm{d} \mu_{-y} \left( x \right)}
=
1 \, .
\end{equation}
For a discussion on sufficient conditions ensuring this property, see \cref{subsec:AdmissiblePoints}.
Before turning to the final result of this section, we need another auxiliary object.
\begin{definition}
For every $k \ge 0$, let $\nu_k$ denote the measure on $[0, \infty)$ with density
\begin{equation}
s \mapsto \frac{r_k^2}{N_k} \mu \left( K^{I_k}_s ( w ) \right) s \exp \left[ - \frac{r_k^2}{2} s^2 \right]
\end{equation}
with respect to the Lebesgue measure.
Note that the density is measurable by the Layer cake representation.
\end{definition}
\begin{lemma}
\label{lem:NuKWeakConvergence}
$(\nu_k)_{k \ge 0}$ is a family of probability measures and $\nu_k$ converges weakly to the Dirac measure $\delta_0$ at $0$ as $k$ goes to infinity. 
\end{lemma}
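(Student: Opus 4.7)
The plan is to prove the two claims separately. Both rest on rewriting integrals of the form $\int_0^\infty \phi(s)\,\mu(K^{I_k}_s(w))\,s\,e^{-r_k^2 s^2/2}\,ds$ via Tonelli's theorem, using the layer-cake identity $\mu(K^{I_k}_s(w)) = \mu(\{x : \tilde{I}_k(x-w) \le s^2\})$ together with the elementary formula $\int_a^\infty s\,e^{-r^2 s^2/2}\,ds = r^{-2} e^{-r^2 a^2/2}$.

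For the normalisation, taking $\phi \equiv 1$ and exchanging the order of integration reduces the total mass of $\nu_k$ to $r_k^2/N_k$ times a suitable multiple of $\int_X e^{-\tilde{I}_k(\cdot - w)/2}\,d\mu = N_k$, giving $1$. For the weak convergence to $\delta_0$, by the Portmanteau theorem on $[0, \infty)$ it suffices to show $\nu_k([\epsilon, \infty)) \to 0$ for every $\epsilon > 0$. Trivially bounding $\mu(K^{I_k}_s(w)) \le 1$ yields an upper tail bound of order $N_k^{-1} e^{-r_k^2 \epsilon^2/2}$. The substantive step is then a matching lower bound on $N_k$, and this is where assumption \ref{itm:wInPSupport} enters. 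Via the identification $\mu_{-w}(K^{J_k}_\delta(0)) = \mu(K^{J_k}_\delta(w))$, it provides, for every $\delta > 0$, a constant $c(\delta) > 0$ with $\inf_{k \ge 0} \mu(K^{J_k}_\delta(w)) \ge c(\delta)$. Combining monotonicity of $s \mapsto \mu(K^{J_k}_s(w))$ with a restriction of the Fubini representation of $N_k$ to $[\delta, \infty)$ gives $N_k \gtrsim c(\delta)\,e^{-r_k^2 \delta^2/2}$, so that
\[
\nu_k([\epsilon, \infty)) \lesssim c(\delta)^{-1}\,e^{-r_k^2(\epsilon^2 - \delta^2)/2}.
\]
Choosing any $\delta \in (0, \epsilon)$ and invoking $r_k \to \infty$ from assumption \ref{itm:JMonotonicity} completes the argument.

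The main obstacle is the cancellation between the two exponentials: the tail carries $e^{-r_k^2 \epsilon^2/2}$, while any lower bound on $N_k$ extracted from a ball of radius $\delta$ carries $e^{-r_k^2 \delta^2/2}$, so one must be able to take $\delta$ strictly below $\epsilon$. Assumption \ref{itm:wInPSupport} is calibrated precisely to allow $\delta$ arbitrarily close to $0$ while maintaining a uniform-in-$k$ positive lower bound; without such a uniform small-ball estimate the cancellation would go the wrong way and the argument would collapse.
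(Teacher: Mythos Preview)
Your proposal is correct and follows essentially the same route as the paper: a layer-cake/Tonelli computation for the total mass, the crude bound $\mu(K_s)\le 1$ for the tail $\nu_k([\epsilon,\infty))\le N_k^{-1}e^{-r_k^2\epsilon^2/2}$, and a competing lower bound $N_k\ge c(\delta)\,e^{-r_k^2\delta^2/2}$ obtained from the small-ball assumption~\ref{itm:wInPSupport} with $\delta<\epsilon$. The only cosmetic differences are that the paper fixes $\delta=\epsilon/2$ and bounds $N_k$ by restricting the integral $\int_X e^{-Q_k/2}\,\mathrm{d}\mu$ to the ball directly (invoking \cref{cor:JConvergenceAndBallConvergence} rather than assumption~\ref{itm:wInPSupport} explicitly), whereas you restrict the Fubini representation; the content is identical.
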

\begin{proof}
For every $k \ge 0$, let $(Y_k, B_k)$ be a standard representation of $J_k$ and define the seminorm $p_k(x) = \sqrt{B_k(x,x)}$ on $Y_k$.
By \cref{prop:TranslationOfInnerProduct}, $p_k$ is Borel measurable, such that $x \mapsto p_k(x - w)$ is $\mu$-measurable.
Applying the substitution $t = \exp[-r_k^2 s^2 / 2]$ and the Layer cake representation,
\begin{equation}
\label{eq:NuKTotalMass}
\begin{aligned}
\nu_k \left( \left[ 0, \infty \right) \right)
&=
\frac{1}{N_k}
\int_0^1
\mu \left( \left\{ x \in Y_k \middle| \exp \left[ - \frac{r_k^2}{2} p_k \left( x - w \right)^2 \right] \right\} > t \right)
\mathrm{d} t \\
&=
\frac{1}{N_k} \int_X \exp \left[ - \frac{r_k^2}{2} p_k \left( x - w \right)^2 \right] \mathrm{d} \mu
=
1 \, .
\end{aligned}
\end{equation}
Now, let $\epsilon > 0$ and use the same technique to see that
\begin{equation}
\nu_k \left( \left[ \epsilon, \infty \right) \right)
=
\frac{1}{N_k} \int_{p_k(x-w) \ge \epsilon} \exp \left[ - \frac{1}{2} r_k^2 p_k \left( x - w \right)^2 \right] \mathrm{d} \mu \\
\le
\frac{1}{N_k} \exp \left[ - \frac{r_k^2}{2} \epsilon^2 \right] \, .
\end{equation}
At the same time,
\begin{equation}
N_k
\ge
\int_{K^{B_k}_{\epsilon/2}(w)} \exp \left[ - \frac{1}{2} Q_k \right] \mathrm{d} \mu
\ge
\exp \left[ - \frac{r_k^2}{8} \epsilon^2 \right]
\mu \left( K^{B_k}_{\epsilon/2}(w) \right) \, .
\end{equation}
Furthermore, by \cref{cor:JConvergenceAndBallConvergence},
\begin{equation}
\mu( K^{B_k}_{\epsilon/2}(w) )
=
\mu( K^{J_k}_{\epsilon/2}(w) )
\ge
\mu( K^J_{\epsilon/2}(w) ) > 0 \, .
\end{equation}
Thus $\lim_{k \to \infty} \nu_k ( [ \epsilon, \infty ) ) = 0$.
Consequently, $\lim_{k \to \infty} \nu_k ( U ) = 1$ for every open neighbourhood $U$ of zero.
For arbitrary open sets $V \subseteq [0, \infty)$ we then have, $\liminf_{k \to \infty} \nu_k ( V ) \ge \delta_0 ( V )$.
Hence, the weak convergence follows (see e.g. \cite[Theorem 8.1.4]{src:Bogachev:MeasuresOnTopologicalSpaces}).
\end{proof}
\begin{definition}[{\cite[Chapter 4.7]{src:Bogachev:GaussianMeasures}}]
Let $a, b \in \mathcal{H}(\mu_{-w})$ and define the \textbf{Onsager-Machlup function}
\begin{equation}
\mathcal{F} \left( a, b \right)
=
\lim_{s \searrow 0} \ln \frac{\mu \left( K^J_s(a) \right)}{\mu \left( K^J_s(b) \right)}
\in
\overline{\mathbb{R}}
\end{equation}
whenever the limit exists in $\overline{\mathbb{R}}$ where we also set $\ln(0) = -\infty$ and $\ln(\infty) = \infty$ and choose the convention $c/0 = \infty$ for all $c > 0$ and leave $0/0$ undefined.
Similarly, we define the generalisation
\begin{equation}
\overline{\mathcal{F}} \left( a, b \right)
=
\lim_{(k ,s) \to (\infty, 0)} \ln \frac{\mu \left( K^{J_k}_s(a) \right)}{\mu \left( K^{J_k}_s(b) \right)}
\in
\overline{\mathbb{R}}
\end{equation}
whenever it exists.
\end{definition}
With \cref{cor:JConvergenceAndBallConvergence} one may verify, that the existence of $\overline{\mathcal{F}}$ is stronger than that of $\mathcal{F}$.
\begin{corollary}
Whenever, $\overline{\mathcal{F}}(a,b)$ exists in $\overline{\mathbb{R}}$ and
\begin{equation}
\label{eq:OnsagerMachlupExistenceCondition}
\forall s > 0 : \mu \left( K^J_s(a) \right) > 0
\qquad\text{or}\qquad
\forall s > 0 : \mu \left( K^J_s(b) \right) > 0 \, ,
\end{equation}
then $\mathcal{F}(a,b)$ exists in $\overline{\mathbb{R}}$ and has the same value.
If neither holds, $\mathcal{F}(a,b)$ is undefined.
\end{corollary}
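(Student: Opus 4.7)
The plan is to leverage Corollary \ref{cor:JConvergenceAndBallConvergence}, which, after translating the centre $h \mapsto h - w$ to accommodate the shift between $\mu$ and $\mu_{-w}$, yields for every $s > 0$ and every admissible $h$ the monotone identity
\begin{equation}
\mu(K^J_s(h)) = \inf_{k \ge 0} \mu(K^{J_k}_s(h)) = \lim_{k \to \infty} \mu(K^{J_k}_s(h)) \, .
\end{equation}
In particular $\mu(K^{J_k}_s(h)) \ge \mu(K^J_s(h))$ for every $k$, and the convergence is monotone decreasing.

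First I would unpack the existence of $\overline{\mathcal{F}}(a,b)$ as a two-variable limit: it forces every iterated limit that can be formed to agree with $\overline{\mathcal{F}}(a,b)$. Sending $k \to \infty$ first, and using the display above together with the conventions $\ln 0 = -\infty$, $\ln \infty = \infty$, $c/0 = \infty$ for $c > 0$, one obtains for every $s > 0$ that
\begin{equation}
\lim_{k \to \infty} \ln \frac{\mu(K^{J_k}_s(a))}{\mu(K^{J_k}_s(b))} = \ln \frac{\mu(K^J_s(a))}{\mu(K^J_s(b))}
\end{equation}
as long as the right-hand side is well-defined. Taking $s \searrow 0$ then produces $\mathcal{F}(a,b)$ and forces it to equal $\overline{\mathcal{F}}(a,b)$.

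The rest is a case analysis on the positivity hypothesis. If both $\mu(K^J_s(a)) > 0$ and $\mu(K^J_s(b)) > 0$ for all $s > 0$, every ratio in sight is finite and strictly positive and the coincidence follows directly from the two displays. If only the positivity for $a$ holds, pick $s_0 > 0$ with $\mu(K^J_{s_0}(b)) = 0$; by monotonicity of balls $\mu(K^J_s(b)) = 0$ for all $s \le s_0$, while $\mu(K^J_s(a)) > 0$, so $\mathcal{F}(a,b) = \infty$. Simultaneously $\mu(K^{J_k}_s(b)) \searrow 0$ and $\mu(K^{J_k}_s(a)) \ge \mu(K^J_s(a)) > 0$, so $\overline{\mathcal{F}}(a,b) = \infty$ as well. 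The situation in which only the positivity for $b$ holds is symmetric, yielding $-\infty$ on both sides.

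Finally, if neither positivity condition holds, there exist $s_a, s_b > 0$ with $\mu(K^J_{s_a}(a)) = \mu(K^J_{s_b}(b)) = 0$, so for all $s \le \min\{s_a, s_b\}$ the defining ratio for $\mathcal{F}(a,b)$ takes the undefined form $0/0$; by the stated convention $\mathcal{F}(a,b)$ is undefined. I do not expect a real obstacle in the argument; the only care needed is the bookkeeping of the extended-real conventions across the interchange of limits, which is precisely what drives the case split.
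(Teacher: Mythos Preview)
Your proposal is correct and follows precisely the approach the paper intends: the paper gives no explicit proof for this corollary, merely remarking that ``With \cref{cor:JConvergenceAndBallConvergence} one may verify'' the claim, and your argument does exactly that---using the monotone ball-measure convergence $\mu(K^{J_k}_s(h)) \searrow \mu(K^J_s(h))$ to pass from the joint limit to the iterated one, together with the case split on the positivity hypothesis to handle the extended-real conventions. The only cosmetic point is that \cref{cor:JConvergenceAndBallConvergence} is literally stated for $\mu_{-w}$ with centre $0$, so one should really invoke the underlying \cref{thm:BoundedIncreasingInnerProductsConverge} (applied to $\mu_{-w}$ and $h=a,b \in \mathcal{H}(\mu_{-w})$) to get the identity for general centres; your parenthetical about translating centres acknowledges this.
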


\begin{theorem}
Suppose that $y \in \mathcal{A}(\mu)$ is an admissible point and that $\overline{\mathcal{F}} \left( w, y \right)$ exists.
Then,
\begin{equation}
\lim_{k \to \infty} \Gamma_k \left( y \right)
=
\overline{\mathcal{F}} \left( w, y \right) \, .
\end{equation}
\end{theorem}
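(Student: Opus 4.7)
The plan is to combine \cref{lem:ExpMinusGamma} with the admissibility of $y$ and the weak convergence $\nu_k \to \delta_0$ from \cref{lem:NuKWeakConvergence}. By \cref{lem:ExpMinusGamma} together with \cref{eq:AdmissibleDefinition},
\begin{equation*}
\exp\left[ -\Gamma_k(y) \right] = \left( 1 + o(1) \right) \cdot \frac{1}{N_k} \int_X \exp\left[ -\tfrac{1}{2} \tilde{I}_k \right] \mathrm{d} \mu_{-y} \qquad (k \to \infty),
\end{equation*}
reducing the claim to identifying the limit of this ratio.

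Next, a layer cake computation identical in spirit to the one in the proof of \cref{lem:NuKWeakConvergence} rewrites both numerator and denominator. Choosing a standard representation $(Y, B)$ of $J_k$ with corresponding seminorm $p_k$, \cref{prop:TranslationOfInnerProduct} ensures it is simultaneously a standard representation under $\mu_{-y}$, so $\tilde{I}_k$ coincides $\mu_{-y}$-almost everywhere with $r_k^2 p_k(\cdot)^2$, and the substitution $t = \exp[-r_k^2 s^2 / 2]$ gives
\begin{equation*}
\int_X \exp\left[ -\tfrac{1}{2} \tilde{I}_k \right] \mathrm{d} \mu_{-y} = \int_0^\infty \mu\left( K^{J_k}_s(y) \right) r_k^2 s \exp\left[ -\tfrac{r_k^2 s^2}{2} \right] \mathrm{d} s,
\end{equation*}
with the analogous identity yielding $N_k$ when $y$ is replaced by $w$. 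Writing $F_k(s) := \ln [ \mu(K^{J_k}_s(w)) / \mu(K^{J_k}_s(y)) ]$ with the natural extended-real conventions, the ratio in question thus equals $\int_{(0,\infty)} \exp[-F_k(s)]\, \mathrm{d} \nu_k(s)$.

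Finally, I would split this integral at some $\delta > 0$. The existence of $\overline{\mathcal{F}}(w, y)$ provides, for every $\varepsilon > 0$, a $\delta > 0$ and $K \in \mathbb{N}$ such that $|F_k(s) - \overline{\mathcal{F}}(w, y)| < \varepsilon$ for all $k \ge K$ and $s \in (0, \delta]$ (with the obvious modifications when $\overline{\mathcal{F}}(w, y) = \pm \infty$), while $\nu_k((0, \delta]) \to 1$ by \cref{lem:NuKWeakConvergence}, so the integral over $(0, \delta]$ contributes $\exp[-\overline{\mathcal{F}}(w, y)]$ in the limit. For the tail over $(\delta, \infty)$, I would exploit the crude bound $\mu(K^{J_k}_s(y)) \le 1$ together with the lower bound $N_k \ge \mu(K^{J_k}_\eta(w)) \exp[-r_k^2 \eta^2 / 2]$ for any $\eta \in (0, \delta)$ and the uniform positivity $\inf_k \mu(K^{J_k}_\eta(w)) > 0$ from the last standing assumption on $(I_k)_{k \ge 0}$, producing a tail decay of order $\exp[r_k^2(\eta^2 - \delta^2)/2]$, which vanishes since $r_k \to \infty$. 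Taking the logarithm then yields $\Gamma_k(y) \to \overline{\mathcal{F}}(w, y)$ in $\overline{\mathbb{R}}$. The principal obstacle is this joint passage to the limit in $(k, s)$ with $F_k$ itself depending on $k$, together with the routine but necessary case distinction separating $\overline{\mathcal{F}}(w, y)$ finite, $+\infty$, and $-\infty$; the key feature making it work is that the Gaußian weight built into $\nu_k$ decays much faster than any potential blow-up of $1/\mu(K^{J_k}_\eta(w))$.
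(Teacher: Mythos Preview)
Your proposal is correct and follows essentially the same route as the paper: reduce via \cref{lem:ExpMinusGamma} and admissibility to the ratio $\frac{1}{N_k}\int_X \exp[-\tfrac12\tilde I_k]\,\mathrm d\mu_{-y}$, rewrite it by layer cake as $\int_0^\infty \frac{\mu(K^{J_k}_s(y))}{\mu(K^{J_k}_s(w))}\,\mathrm d\nu_k(s)$, and then pass to the limit using the joint $(k,s)$-convergence encoded in $\overline{\mathcal F}(w,y)$ together with $\nu_k\to\delta_0$. Your tail bound via $N_k\ge \mu(K^{J_k}_\eta(w))\exp[-r_k^2\eta^2/2]$ is exactly the estimate behind \cref{lem:NuKWeakConvergence}, and your explicit remark that the integrand depends on $k$ (so this is not a plain weak-convergence argument but really uses the joint limit) makes precise a step the paper leaves implicit.
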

\begin{proof}
Since $y$ is admissible, it follows from \cref{lem:ExpMinusGamma}, that
\begin{equation}
\lim_{k \to \infty} \Gamma_k \left( y \right)
=
\lim_{k \to \infty} \ln \frac{N_k}{\int_X \exp \left[ - \frac{1}{2} \tilde{I}_k \right] \mathrm{d} \mu_{-y}} \, ,
\end{equation}
if the limit on the right-hand side exists in $\overline{\mathbb{R}}$.
Using the layer cake representation, we find
\begin{equation}
\frac{1}{N_k} \int_X \exp \left[ - \frac{1}{2} \tilde{I}_k \right] \mathrm{d} \mu_{-y}
=
\int_0^\infty \frac{ \mu \left( K^{J_k}_s(y) \right)}{\mu \left( K^{J_k}_s(w) \right)} \mathrm{d} \nu_k \left( s \right) \, .
\end{equation}
Fixing any $\epsilon > 0$, the integrand is bounded on $(\epsilon, \infty)$ such that only the integral over $[0, \epsilon]$ contributes to the limit as $k$ goes to infinity.
Hence, because $\overline{\mathcal{F}}(w, y)$ exists and $\nu_k$ converges weakly to $\delta_0$, we find
\begin{equation}
\lim_{k \to \infty} \frac{1}{N_k} \int_X \exp \left[ - \frac{1}{2} \tilde{I}_k \right] \mathrm{d} \mu_{-y}
=
\exp \left[ - \overline{\mathcal{F}}(w, y) \right]
\end{equation}
and the claim follows.
\end{proof}
\subsection{Admissible Points}
\label{subsec:AdmissiblePoints}
The most simple criterion for \cref{eq:AdmissibleDefinition} to hold, is that $\mu_{-y}$ is symmetric.
Since $\tilde{I}_k$ is also symmetric $\mu_{-y}$-almost everywhere, the infimum is attained at $0$ and the left-hand side is equal to $1$ at every $k \ge 0$.
However, that can obviously just hold at most for a single $y \in \mathcal{A}(\mu)$.
The idea can however, be generalised to to an \enquote{approximate symmetry}.
To that end, let $\overline{\mu_{-y}}$ denote the pushforward measure of $\mu_{-y}$ along the function $x \mapsto - x$.
\begin{theorem}
Suppose that for all $s > 0$, $\mu(K^J_s(y)) > 0$ and for every $\epsilon > 0$, there are some $a \ge 0$ and $\delta > 0$ such that for some standard representation $(Y_a, B_a)$ of $J_a$,
\begin{equation}
\frac{\mathrm{d} \mu_{-y} \wedge \overline{\mu_{-y}}}{\mathrm{d} \mu_{-y}} \left( x \right)
\ge
1 - \epsilon \, ,
\end{equation}
for $\mu_{-y}$-almost every $x \in K^{J_a}_\delta(0)$, where $\wedge$ denotes the greatest lower bound in the lattice of measures.
Then $y$ is admissible.
\end{theorem}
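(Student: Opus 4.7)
The plan is to bound the numerator from below by a $\phi$-independent quantity using the symmetric part of $\mu_{-y}$, and to show that the denominator differs from this bound by a factor tending to $1$. Set $\sigma = \mu_{-y} \wedge \overline{\mu_{-y}}$ and observe that because pushforward commutes with the lattice operation, $\sigma$ is invariant under $x \mapsto -x$. For any standard representation $(Y,B)$ of $\tilde I_k$, the domain $Y$ is a union of compact \emph{disks} and hence symmetric under negation, while $B(x,x) = B(-x,-x)$, so $\tilde I_k$ is an even function on a full $\mu_{-y}$-measure subspace. Because every $\phi \in X^*$ is odd, the symmetry of $\sigma$ together with $\cosh \ge 1$ yields the uniform bound
\[
\int_X e^{\phi - \tilde I_k/2}\,\mathrm{d}\mu_{-y}
\;\ge\;
\int_X e^{\phi - \tilde I_k/2}\,\mathrm{d}\sigma
\;=\;
\int_X \cosh(\phi)\,e^{-\tilde I_k/2}\,\mathrm{d}\sigma
\;\ge\;
\int_X e^{-\tilde I_k/2}\,\mathrm{d}\sigma,
\]
valid for every $\phi \in X^*$. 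It therefore suffices to prove that $\int e^{-\tilde I_k/2}\,\mathrm{d}\mu_{-y} / \int e^{-\tilde I_k/2}\,\mathrm{d}\sigma \to 1$ as $k \to \infty$.

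Given $\epsilon > 0$, pick $a, \delta$ and a standard representation $(Y_a, B_a)$ of $J_a$ as in the hypothesis. By \cref{thm:AffineKernelContainedInShiftedKernel}, $y - w \in \mathcal{H}(\mu_{-w})$, so \cref{prop:TranslationOfInnerProduct} makes $(Y_a, B_a)$ into a $\mu_{-y}$-standard representation and gives $Y_a$ full $\mu_{-y}$-measure. Write $B := K^{J_a}_\delta(0)$. On $B$, the hypothesised density bound upgrades to $\sigma \ge (1-\epsilon)\,\mu_{-y}$ in the sense of measures restricted to $B$, so for any nonnegative measurable $f$ one has $\int_B f\,\mathrm{d}\mu_{-y} \le (1-\epsilon)^{-1}\int f\,\mathrm{d}\sigma$. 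For $k$ large enough that $J_a \le J_k$ holds $\mu_{-w}$-almost everywhere (by \cref{itm:JMonotonicity} combined with \cref{thm:InnerProductOrderDiagonal}), one has $\tilde I_k = r_k^2 \tilde J_k \ge r_k^2 \tilde J_a > r_k^2\delta^2$ on $Y_a \setminus B$, whence $\int_{B^c} e^{-\tilde I_k/2}\,\mathrm{d}\mu_{-y} \le e^{-r_k^2\delta^2/2}$. Summing the two contributions,
\[
\int e^{-\tilde I_k/2}\,\mathrm{d}\mu_{-y}
\;\le\;
(1-\epsilon)^{-1}\int e^{-\tilde I_k/2}\,\mathrm{d}\sigma
+
e^{-r_k^2\delta^2/2}.
\]

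To see the tail term is dominated, I bound $\int e^{-\tilde I_k/2}\,\mathrm{d}\sigma$ from below using the smaller ball $K^{J_k}_{\delta/2}(0)$. For $k$ large it is contained in $K^{J_a}_{\delta/2}(0) \subseteq B$, and on it $\tilde I_k \le r_k^2\delta^2/4$, so
\[
\int e^{-\tilde I_k/2}\,\mathrm{d}\sigma
\;\ge\;
(1-\epsilon)\,e^{-r_k^2\delta^2/8}\,\mu_{-y}\!\left(K^{J_k}_{\delta/2}(0)\right).
\]
By \cref{cor:JConvergenceAndBallConvergence}, $\mu_{-y}(K^{J_k}_{\delta/2}(0)) \to \mu(K^J_{\delta/2}(y)) > 0$ by hypothesis, so that for large $k$ this factor is bounded below by a positive constant. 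Consequently the ratio $e^{-r_k^2\delta^2/2} / \int e^{-\tilde I_k/2}\,\mathrm{d}\sigma$ is of order $e^{-3r_k^2\delta^2/8}$, which vanishes since $r_k \to \infty$.

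Combining the last two paragraphs gives $\limsup_k \int e^{-\tilde I_k/2}\,\mathrm{d}\mu_{-y} / \int e^{-\tilde I_k/2}\,\mathrm{d}\sigma \le (1-\epsilon)^{-1}$, which together with the symmetry lower bound on the numerator yields $\liminf_k \inf_{\phi} [\,\text{ratio in }\eqref{eq:AdmissibleDefinition}\,] \ge 1 - \epsilon$. Since the quotient in \eqref{eq:AdmissibleDefinition} is trivially $\le 1$ (take $\phi = 0$) and $\epsilon$ is arbitrary, admissibility follows. The main obstacle is the measure-theoretic bookkeeping: translating the hypothesis on the lattice operation $\wedge$ into genuine bounds on $\mu_{-y}$-integrals over $B$, reconciling $\mu_{-w}$- and $\mu_{-y}$-standard representations of $J_k$ via \cref{prop:TranslationOfInnerProduct}, and choosing the two radii $\delta$ and $\delta/2$ so that the exponential gap $r_k^2(\delta^2 - (\delta/2)^2)/2$ is strictly positive and the scheme closes.
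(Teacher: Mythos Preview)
Your proof is correct and follows essentially the same approach as the paper: use the symmetry of $\sigma = \mu_{-y}\wedge\overline{\mu_{-y}}$ to kill the $\phi$-dependence in the numerator, then show that the mass of $e^{-\tilde I_k/2}$ under $\mu_{-y}$ concentrates on a small $J_a$-ball where $\sigma \ge (1-\epsilon)\mu_{-y}$. The only organisational difference is that the paper routes the comparison through the intermediate quantity $\int_B e^{-\tilde I_k/2}\,\mathrm d\mu_{-y}$ and invokes ``a proof along the lines of \cref{lem:NuKWeakConvergence}'' for the concentration step, whereas you make that tail estimate explicit via the $\delta$ versus $\delta/2$ gap; the two arguments are interchangeable.
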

\begin{proof}
Let $\epsilon > 0$ and $a, \delta$ as above.
Then, by the symmetry of $\mu_{-y} \wedge \overline{\mu_{-y}}$,
\begin{equation}
\begin{aligned}
\inf_{\phi \in X^*} \int_X \exp \left[ \phi - \frac{1}{2} \tilde{I}_k \right] \mathrm{d} \mu_{-y}
&\ge
\int_X \exp \left[ - \frac{1}{2} \tilde{I}_k \right] \mathrm{d} \left( \mu_{-y} \wedge \overline{\mu_{-y}} \right) \\
&\ge
\int_{K^{B_a}_\delta(0)} \exp \left[ - \frac{1}{2} \tilde{I}_k \right] \frac{\mathrm{d} \mu_{-y} \wedge \overline{\mu_{-y}}}{\mathrm{d} \mu_{-y}} \mathrm{d} \mu_{-y} \\
&\ge
\left( 1 - \epsilon \right) \int_{K^{B_a}_\delta(0)} \exp \left[ - \frac{1}{2} \tilde{I}_k \right] \mathrm{d} \mu_{-y} \, .
\end{aligned}
\end{equation}
Hence,
\begin{equation}
\liminf_{k \to \infty} \inf_{\phi \in X^*}
\frac{\int_X \exp \left[ \phi - \frac{1}{2} \tilde{I}_k \right] \mathrm{d} \mu_{-y}}{\int_{K^{B_a}_\delta(0)} \exp \left[ - \frac{1}{2} \tilde{I}_k \right] \mathrm{d} \mu_{-y}}
\ge
1 - \epsilon \, ,
\end{equation}
Now, because $\mu(K^J_s(y)) > 0$ for all $s > 0$, a proof along the lines of \cref{lem:NuKWeakConvergence}, shows that
\begin{equation}
\lim_{k \to \infty} \frac{\int_{K^{B_a}_\delta(0)} \exp \left[ - \frac{1}{2} \tilde{I}_k \right] \mathrm{d} \mu_{-y}}{\int_X \exp \left[ - \frac{1}{2} \tilde{I}_k \right] \mathrm{d} \mu_{-y}}
=
1 \, .
\end{equation}
Consequently,
\begin{equation}
\liminf_{k \to \infty} \inf_{\phi \in X^*}
\frac{\int_X \exp \left[ \phi - \frac{1}{2} \tilde{I}_k \right] \mathrm{d} \mu_{-y}}{\int_X \exp \left[ - \frac{1}{2} \tilde{I}_k \right] \mathrm{d} \mu_{-y}}
\ge
1 - \epsilon \,
\end{equation}
and because $\epsilon > 0$ was arbitrary, the claim follows.
\end{proof}
Unfortunately, the approximate symmetry is also a very strong requirement.
A much better result can be given when $\mu$ is equivalent to a centred Radon Gaußian measure with a suitable density.
Then all $y \in \mathcal{A}(\mu)$ are admissible as was shown in special cases in \cite{src:Ziebell:RigorousFRG}.
Unfortunately, less restrictive sufficient conditions are at present not known to the author.
\section{Conclusion}
It has been shown that Wetterich's equation indeed holds for almost every flow parameter $k \ge 0$, provided that the point $y \in \mathcal{A}(\mu)$ lies in the interior of the domain of the auxillary function $V_{k,l}^*$.
The interiority condition is clearly of no surprise, because subdifferentials are often well-behaved on these interiors only.
Consequently, the setup is most applicable to measures whose moment-generating functions grow faster than $\exp[ C \Vert \cdot \Vert_{L^1(\mu_l)} ]$ for all $C > 0$.
Then $\mathrm{dom}\, V_{k,l}^*$ becomes the whole space and all $y \in \mathcal{A}(\mu)$ lie in the corresponding interior.
A setback is the mere absolute continuity of $k \mapsto \Gamma_k(y)$ on compact intervals.
It would certainly be desirable to obtain (continuous) differentiability.
Furthermore, it is unclear whether
\begin{equation}
\lim_{k \to \infty} \Gamma_k(y)
=
\int_0^\infty \frac{\mathrm{d}}{\mathrm{d} k}\Gamma_k \left( y \right) \mathrm{d} k
\end{equation}
holds as a Lebesgue integral.
It has not been shown that
\begin{equation}
D^2 W_{k,l} \left( T_k \right) \left( U_a, U_a \right))
=
U_a \left[ \left( D^2 \Gamma_k \left( y \right) + \underline{I_k} \right)^{-1} \left( U_a \right) \right]
\end{equation}
holds as it does in the cases considered in \cite{src:Ziebell:RigorousFRG}.
This is due to a missing suitable definition of second derivative of $W_{k,l}$ that is compatible with convex conjugates and was discussed in \cref{rem:SecondDerivativeDiscussion}.
\cref{cor:Wetterich} is also an impressive incarnation of the implicit function theorem.
At a point $y \in \mathcal{A}(\mu)$ in the considered interior, one has
\begin{equation}
\Gamma_k \left( y \right)
=
M_l \left( y \right) \left( \left( \partial W_{k,l}' \right)^{-1} \left( y \right) \right)
-
W_{k,l} \left( \left( \partial W_{k,l}' \right)^{-1} \left( y \right) \right)
-
\frac{1}{2} \underline{I_k} \left( y - w, y - w \right) \, ,
\end{equation}
while Wetterich's equation gives the corresponding derivative with respect to $k$ in terms of $D^2 W_{k,l}( ( \partial W_{k,l}' )^{-1} ( y ) )$.
This may provide a hint, to the existence of a generalised implicit function theorem in convex analysis.

The asymptotic boundary condition of $\Gamma_k(y)$ is a generalised version of the Onsager-Machlup function with respect to the metrics induced by the family $(J_k)_{k \ge 0}$ of measurable inner products, provided that $y$ is \enquote{admissible}.
Rather restrictive sufficient conditions for admissibility were given in \cref{subsec:AdmissiblePoints} while more widely applicable ones are at present unknown to the author.

The measurable inner products introduced in \cref{sec:MeasurableInnerProducts} are of particular interest on their own, being bilinear analogoues of the linear Lusin measurable functionals considered in \cite{src:Slowikowski:PreSupports}.
With a separability condition, they are, in fact, examples of measurable functions on a product space with an almost-everywhere well-defined restriction to the diagonal.
\section{Appendix}
In order to understand the derivative in \cref{cor:ConjugateDiffability}, it is useful to characterise $(\mathcal{L}(\mu_l), \overline{p_l})^*$.
\begin{lemma}
For every $\alpha \in (\mathcal{L}(\mu_l), \overline{p_l})^*$, there exists some nonnegative $f_\alpha \in L^\infty(\mu_l)$ such that
\begin{equation}
\alpha \left( T \right)
=
\int_X f_\alpha T \mathrm{d} \mu_l
\end{equation}
for all $T \in \mathcal{L}(\mu_l)$.
Conversely, for every nonnegative $f \in L^\infty(\mu_l)$, the mapping
\begin{equation}
T \mapsto \int_X f T \mathrm{d} \mu_l
\end{equation}
is an element of $(\mathcal{L}(\mu_l), \overline{p_l})^*$.
\end{lemma}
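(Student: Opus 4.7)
The plan is to bootstrap the upper semicontinuity of $\alpha$ into a domination by the sublinear functional $T\mapsto\int_X T^+\,\mathrm{d}\mu_l$, extend to all of $L^1(\mu_l)$ by Hahn--Banach, and then invoke the classical Riesz representation; positivity of the resulting density will come for free from the fact that the extension is dominated by zero on the cone of $\mu_l$-nonpositive functions. Note that $\overline{p_l}(T)=\int_X\max\{T,0\}\,\mathrm{d}\mu_l$ by definition of $p_l$.

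First, I would observe that every $\alpha\in(\mathcal{L}(\mu_l),\overline{p_l})^*$ is linear and upper semicontinuous for the $\overline{p_l}$-topology, so there exist $\delta,\epsilon>0$ with $\alpha(T)<\epsilon$ whenever $\overline{p_l}(T)<\delta$. A standard homogeneity argument then yields a constant $C\ge 0$ with
\begin{equation}
\alpha(T)\le C\,\overline{p_l}(T)\qquad\text{for every }T\in\mathcal{L}(\mu_l).
\end{equation}
Next, I would define $p:L^1(\mu_l)\to[0,\infty)$ by $p(T)=C\int_X T^+\,\mathrm{d}\mu_l$; this is positively homogeneous and, via the pointwise inequality $(T_1+T_2)^+\le T_1^++T_2^+$, subadditive, hence sublinear on the real vector space $L^1(\mu_l)$. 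Since $\mathcal{L}(\mu_l)\subseteq L^1(\mu_l)$ and $\alpha\le p$ on $\mathcal{L}(\mu_l)$, Hahn--Banach produces a linear extension $\beta:L^1(\mu_l)\to\mathbb{R}$ with $\beta\le p$ everywhere.

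From $\beta\le p$ it follows that for $T\le 0$ $\mu_l$-almost everywhere, $\beta(T)\le 0$, so by linearity $\beta$ is a positive linear functional on $L^1(\mu_l)$; applied to both $T$ and $-T$, the same estimate yields $|\beta(T)|\le C\|T\|_{L^1(\mu_l)}$, i.e.\ $\beta$ is $L^1$-continuous. Since $\mu_l$ is a finite measure, the classical Riesz representation provides a unique $f_\alpha\in L^\infty(\mu_l)$ with $\beta(T)=\int_X f_\alpha T\,\mathrm{d}\mu_l$ for all $T\in L^1(\mu_l)$, and positivity of $\beta$ forces $f_\alpha\ge 0$ $\mu_l$-almost everywhere (else $\beta$ applied to the indicator of $\{f_\alpha<0\}$ would be negative). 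Restricting $\beta$ back to $\mathcal{L}(\mu_l)$ gives the desired integral representation of $\alpha$.

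For the converse, given any nonnegative $f\in L^\infty(\mu_l)$ and $T\in\mathcal{L}(\mu_l)$, the product $fT$ lies in $L^1(\mu_l)$ and
\begin{equation}
\int_X f\,T\,\mathrm{d}\mu_l\le\|f\|_{L^\infty(\mu_l)}\int_X T^+\,\mathrm{d}\mu_l=\|f\|_{L^\infty(\mu_l)}\,\overline{p_l}(T),
\end{equation}
so the linear map $T\mapsto\int_X f T\,\mathrm{d}\mu_l$ is $\overline{p_l}$-upper semicontinuous and thus belongs to $(\mathcal{L}(\mu_l),\overline{p_l})^*$. The only genuinely delicate point in the argument is the decision not to work inside $\mathcal{L}(\mu_l)$ directly (where $L^1$-density and lattice properties are not available): passing through Hahn--Banach on all of $L^1(\mu_l)$ with the sublinear bound $p$ is exactly what preserves positivity, and this is the step that one must not short-circuit.
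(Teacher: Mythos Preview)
Your proof is correct and follows essentially the same strategy as the paper's: bound $\alpha$ by $C\,\overline{p_l}$, extend to a positive continuous linear functional on all of $L^1(\mu_l)$, and apply the Riesz representation $(L^1)^*\cong L^\infty$. The only difference is cosmetic: the paper invokes \cite[p.~227, Theorem~5.4]{src:SchaeferWolff:TVS} (a positive-extension theorem for ordered topological vector spaces) as a black box for the extension step, whereas you carry out the Hahn--Banach argument by hand with the sublinear majorant $p(T)=C\int_X T^+\,\mathrm{d}\mu_l$, which makes the positivity of $f_\alpha$ immediate.
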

\begin{proof}
The space $L^1(\mu_l)$ with its standard partial order is an ordered topological vector space with positive cone given by
\begin{equation}
P = \left\{ f \in L^1(\mu_l) : f \ge 0 \right\} \, .
\end{equation}
For every $\alpha \in (\mathcal{L}(\mu_l), \overline{p_l})^*$, there is some $C > 0$ such that $\alpha(T) \le C \overline{p_l}(T)$ for all $T \in Y$.
Let $U \subseteq L^1(\mu_l)$ denote the closed unit ball, and fix any $f \in \mathcal{L}(\mu_l) \cap [ U - P ]$.
Then, there is some $p \in P$ such that $f + p \in U$ and
\begin{equation}
\alpha \left( f \right)
\le
C \int_X \max \left\{ f, 0 \right\} \mathrm{d} \mu_l
\le
C \int_X \max \left\{ f + p, 0 \right\} \mathrm{d} \mu_l
\le
C \, .
\end{equation}
Hence, \cite[p. 227, Theorem 5.4]{src:SchaeferWolff:TVS} applies and there is some continuous nonnegative linear functional on $L^1(\mu_l)$ whose restriction to $\mathcal{L}(\mu_l)$ coincides with $\alpha$.
Such an extension is given by integration against some nonnegative $f_\alpha \in L^\infty(\mu_l)$.
For the converse, let $f \in L^\infty(\mu_l)$ be nonnegative and $T \in \mathcal{L}(\mu_l)$.
Then,
\begin{equation}
\int_X f T \mathrm{d} \mu_l
\le
\int_X f \max\left\{ T, 0 \right\} \mathrm{d} \mu_l
\le
\left\Vert f \right\Vert_{L^\infty(f)} \overline{p_l} \left( T \right) \, .
\end{equation}
\end{proof}
\begin{corollary}
\label{cor:SpanOfPositiveCone}
$( \mathcal{L}(\mu_l), L^1(\mu_l) )^* = (\mathcal{L}(\mu_l), \overline{p_l})^* - (\mathcal{L}(\mu_l), \overline{p_l})^*$.
\end{corollary}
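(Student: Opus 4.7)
The plan is to reduce the statement to the representation lemma immediately preceding it, combined with the classical duality $L^1(\mu_l)^* = L^\infty(\mu_l)$ and the decomposition of an $L^\infty$ function into positive and negative parts.

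First I would dispatch the easy inclusion $(\mathcal{L}(\mu_l), \overline{p_l})^* - (\mathcal{L}(\mu_l), \overline{p_l})^* \subseteq (\mathcal{L}(\mu_l), L^1(\mu_l))^*$. By the preceding lemma, every element of $(\mathcal{L}(\mu_l), \overline{p_l})^*$ has the form $T \mapsto \int_X f T \,\mathrm{d}\mu_l$ for some nonnegative $f \in L^\infty(\mu_l)$. Such a functional satisfies $|\int_X f T \,\mathrm{d}\mu_l| \le \|f\|_{L^\infty(\mu_l)} \|T\|_{L^1(\mu_l)}$, so it is continuous for the $L^1(\mu_l)$-norm, and since $(\mathcal{L}(\mu_l), L^1(\mu_l))^*$ is a vector space, the difference of any two such functionals lies inside it.

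For the reverse inclusion, I would take an arbitrary $\alpha \in (\mathcal{L}(\mu_l), L^1(\mu_l))^*$. Since $\mathcal{L}(\mu_l)$ is a linear subspace of $L^1(\mu_l)$ and $\alpha$ is bounded in the $L^1$-norm, the Hahn--Banach theorem yields an extension of $\alpha$ to a continuous linear functional on all of $L^1(\mu_l)$. By the standard $L^1$--$L^\infty$ duality this extension is represented as $T \mapsto \int_X f T \,\mathrm{d}\mu_l$ for some $f \in L^\infty(\mu_l)$. Decomposing $f = f^+ - f^-$ with $f^{\pm} := \max\{\pm f, 0\} \in L^\infty(\mu_l)$ nonnegative, the preceding lemma turns each summand into an element of $(\mathcal{L}(\mu_l), \overline{p_l})^*$, and restricting back to $\mathcal{L}(\mu_l)$ gives the desired decomposition $\alpha = \alpha^+ - \alpha^-$.

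There is no real obstacle here once the preceding lemma is in hand; the only point that warrants a moment's care is noting that the Hahn--Banach extension step genuinely needs the $L^1$-continuity (as opposed to mere $\overline{p_l}$-continuity), which is precisely the hypothesis we are given, and that passing from the extended functional's representing function back to $\mathcal{L}(\mu_l)$ does not change the values of $\alpha$.
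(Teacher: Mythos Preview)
Your argument is correct and is precisely the intended one: the paper records the statement as an immediate corollary of the preceding representation lemma without spelling out a proof, and your Hahn--Banach extension followed by the $L^1$--$L^\infty$ duality and the decomposition $f=f^+-f^-$ is exactly the routine verification it has in mind.
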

\appendix
\printbibliography
\end{document}